\title{Amalgam Anosov representations}
\author[R. D. Canary]{Richard D. Canary}
\address{Department of Mathematics, University of Michigan, Ann Arbor, MI, USA}
\email{canary@umich.edu}
\author[M. Lee]{Michelle Lee}
\address{Department of Mathematics, University of Maryland, College Park, MD, USA}
\email{mdl@umd.edu}
\author[M. Stover]{Matthew Stover}
\address{Department of Mathematics, Temple University, Philadelphia, PA, USA}
\email{mstover@temple.edu}
\author[Appendix with A. Sambarino]{{\rm Appendix by the authors and} Andr\'es Sambarino}
\address{Universit\'e Paris IV Pierre et Marie Curie, Paris, France}
\email{andres.sambarino@gmail.com}
\theoremstyle{plain}
\newtheorem{Theorem}{Theorem}[section]
\newtheorem{Lemma}[Theorem]{Lemma}
\newtheorem{Proposition}[Theorem]{Proposition}
\newtheorem{Corollary}[Theorem]{Corollary}
\newtheorem*{Conjecture}{Conjecture}
\newtheorem{Definition}[Theorem]{Definition}
\theoremstyle{definition}
\newtheorem*{Remark}{Remark}
\DeclareMathOperator{\PSL}{PSL}
\DeclareMathOperator{\Out}{Out}
\DeclareMathOperator{\Mod}{Mod}
\DeclareMathOperator{\SL}{SL}
\newcommand{\R}{\mathbb{R}}
\newcommand{\C}{\mathbb{C}}
\newcommand{\Z}{\mathbb{Z}}
\newcommand{\X}{\mathbb{X}(\Gamma, G)}
\newcommand{\Xa}{\mathbb{X}_{\mathrm{A}}(\Gamma, G)}
\newcommand{\Xaa}{\mathbb{X}_{\mathrm{A^2}}(\Gamma, G)}
\newcommand{\Xsa}{\mathbb{X}_{\mathrm{SA}}(\Gamma, G)}
\newcommand{\RPd}{\mathbb{P}(\R^d)}
\newcommand{\RPddual}{\mathbb{P}(\R^d)^*}
\begin{document}

\begin{abstract}
Let $\Gamma$ be a one-ended, torsion-free hyperbolic group and let $G$ be a semisimple Lie group with finite center. 
Using the canonical JSJ splitting due to Sela, we define amalgam Anosov representations of $\Gamma$ into $G$ and prove that they form a domain of discontinuity for the action of $\Out(\Gamma)$. 
In the appendix, we prove, using projective Anosov Schottky groups, that if the restriction of the representation to every Fuchsian or rigid vertex group of the JSJ splitting of $\Gamma$ is Anosov, with respect to a fixed pair of opposite
parabolic subgroups, then $\rho$ is amalgam Anosov. \end{abstract}

\maketitle

\section{Introduction}\label{sec:intro}

We study the dynamics of the action of the outer automorphism group $\Out(\Gamma)$ of a torsion-free word hyperbolic group $\Gamma$ on the space 
\[
\X={\rm Hom}(\Gamma,G)/G
\]
of (conjugacy classes of) representations of $\Gamma$ into a semisimple Lie group $G$. The subject is motivated by Fricke's result that the mapping class group $\Mod(S)$ of a closed, oriented hyperbolic surface $S$ acts properly discontinuously on the Teichm\"uller space ${\mathcal{T}}(S)$ of marked hyperbolic structures on $S$, which can be identified with a connected component of $\mathbb{X}(\pi_1(S),\PSL_2(\R))$.

Labourie \cite{Labourie} introduced the notion of an Anosov representation of a word hyperbolic group $\Gamma$ into
a semisimple Lie group $G$ with respect to a pair $P^\pm$ of opposite parabolic subgroups of $G$. We will call a representation Anosov if it is
Anosov with respect to some pair of opposite parabolic subgroups.
If $G$ has rank one, then a representation is Anosov if and only if it is convex cocompact, so Anosov representations are natural generalizations of convex cocompact representations into the higher rank setting (see Guichard--Wienhard \cite[Prop.\ 5.15]{Guichard-Wienhard}). Labourie \cite[Thm.\ 1.0.2]{Labourie2} and Guichard--Wienhard \cite[Cor. 5.4]{Guichard-Wienhard} (see also Canary \cite[Thm.\ 6.2]{canary-survey}) proved that $\Out(\Gamma)$ acts properly discontinuously on the open subset $\Xa\subset\X$ of Anosov representations. One might naturally ask whether $\Xa$ is a maximal domain of discontinuity for the action of $\Out(\Gamma)$ on $\X$.

In this paper we define amalgam Anosov representations of a one-ended, torsion-free hyperbolic group $\Gamma$ into a semisimple Lie group $G$ and
prove that the space $\Xaa$ of amalgam Anosov representations is a domain of discontinuity for the action of $\Out(\Gamma)$ on $\X$ that contains $\Xa$.
In Section \ref{sec:ex}, we exhibit examples where $\Xaa$ is strictly larger than $\Xa$.

\begin{Theorem}\label{thm:main}
Let $\Gamma$ be a one-ended, torsion-free hyperbolic group and let $G$ be a semisimple Lie group with finite center. Then the subset $\Xaa$ of $\X$ consisting of amalgam Anosov representations is an open $\Out(\Gamma)$-invariant subset of $\X$ such that
\begin{enumerate}

\item
$\Xaa$ contains the space $\Xa$ of Anosov representations, and

\item
the action of $\Out(\Gamma)$ on $\Xaa$ is properly discontinuous.

\end{enumerate}
\end{Theorem}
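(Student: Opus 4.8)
The plan is to combine the canonicity of Sela's JSJ decomposition with the known proper discontinuity of $\Out$ on the Anosov locus of each vertex group, the only genuinely new bookkeeping being the Dehn twists along the cyclic edge groups. Recall that $\Gamma$ is the fundamental group of a finite graph of groups $\mathcal{G}$ with infinite cyclic edge groups and with each vertex group infinite cyclic, a surface (``Fuchsian'') group, or ``rigid'', that all vertex and edge groups are quasiconvex in $\Gamma$, and that because the decomposition is canonical $\Out(\Gamma)$ acts on the Bass--Serre tree. A representation $\rho$ being amalgam Anosov records (a) that $\rho|_{\Gamma_v}$ is Anosov for every Fuchsian or rigid vertex group $\Gamma_v$, with respect to a pair $P_v^{\pm}$ of opposite parabolics that is allowed to depend on $v$, together with (b) a compatibility/transversality condition at each edge $e$, which in particular forces $\rho(c_e)$ to be loxodromic (where $c_e$ generates $\Gamma_e$) and pins down how the attracting and repelling flags attached to $\Gamma_e$ sit inside the limit sets of the two incident vertex groups. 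I will first dispatch the formal parts of the statement and then concentrate on proper discontinuity, where the twists are the real issue.

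\textbf{Openness, $\Out(\Gamma)$-invariance, and the inclusion of $\mathbb{X}_{\mathrm{A}}(\Gamma,G)$.} Openness follows because each defining condition is open and pulls back continuously: the Anosov condition on $\Gamma_v$ is open in $\Hom(\Gamma_v,G)$ and is pulled back under the restriction $\Hom(\Gamma,G)\to\Hom(\Gamma_v,G)$, while loxodromicity of $\rho(c_e)$ and transversality of flags are open and the relevant flags vary continuously with $\rho$ because Anosov limit maps do. Invariance is immediate from canonicity: an automorphism of $\Gamma$ permutes the conjugacy classes of vertex and edge groups, carrying the defining data of $\rho$ to that of $\rho\circ\phi$. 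For $\mathbb{X}_{\mathrm{A}}(\Gamma,G)\subseteq\Xaa$ one uses quasiconvexity of the vertex groups: the restriction of an Anosov representation to a quasiconvex (hence undistorted, word-hyperbolic) subgroup is Anosov with limit map the restriction of the original; this makes $\rho(c_e)$ loxodromic (infinite-order elements map to bi-proximal elements) and forces the transversality in (b) (the global limit map is already transverse), so $\rho$ is amalgam Anosov.

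\textbf{Proper discontinuity.} Let $\Out_0(\Gamma)\le\Out(\Gamma)$ be the finite-index subgroup acting trivially on the underlying graph of $\mathcal{G}$ and fixing each vertex and edge group up to conjugacy; it suffices to treat $\Out_0(\Gamma)$. By the structure theory of automorphism groups of graphs of groups, $\Out_0(\Gamma)$ is generated by the Dehn twists along the edges together with lifts of vertex-group automorphisms preserving the incident edge groups, so that the kernel of the restriction homomorphism $R\colon\Out_0(\Gamma)\to\prod_v\Out(\Gamma_v;\mathcal{E}_v)$ (the product over Fuchsian and rigid $v$, where $\mathcal{E}_v$ denotes the peripheral structure given by the incident edge groups) is the subgroup $\mathcal{T}$ generated by the twists. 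The restriction map $r\colon\Xaa\to\prod_v\mathbb{X}_{\mathrm{A}}(\Gamma_v,G)$ is continuous and $R$-equivariant, and on each factor $\Out(\Gamma_v;\mathcal{E}_v)$, being a subgroup of $\Out(\Gamma_v)$, acts properly discontinuously on $\mathbb{X}_{\mathrm{A}}(\Gamma_v,G)$ by the theorem of Labourie and Guichard--Wienhard applied to the word-hyperbolic group $\Gamma_v$. Hence for compact $K\subseteq\Xaa$ the set $\{\phi\in\Out_0(\Gamma):\phi(K)\cap K\neq\emptyset\}$ meets only finitely many cosets $\mathcal{T}\phi$, and, choosing one representative per coset, the theorem reduces to showing that for each compact $K'\subseteq\Xaa$ only finitely many twists $\tau\in\mathcal{T}$ satisfy $\tau(K')\cap K'\neq\emptyset$.

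\textbf{The twist estimate (the expected main obstacle).} For each edge $e$ pick $\gamma_e\in\Gamma$ whose axis in the Bass--Serre tree crosses only the translates of $e$; concretely $\gamma_e=a_e b_e$ with $a_e,b_e$ in the two incident vertex groups and outside $\Gamma_e$ in the amalgam case, or a suitable stable letter in the HNN case. A twist $\tau=\prod_e\tau_e^{\,n_e}$ fixes the conjugacy class of $\gamma_{e'}$ up to inserting $\rho(c_{e'})^{\pm n_{e'}}$, the other factors of $\tau$ not affecting $\gamma_{e'}$, so $\rho(\tau(\gamma_{e'}))$ is conjugate to $\rho(a_{e'})\,\rho(c_{e'})^{n_{e'}}\,\rho(b_{e'})\,\rho(c_{e'})^{-n_{e'}}$. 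Using that $\rho(c_{e'})$ is loxodromic with attracting/repelling flags $\xi^{\pm}$, and the transversality built into condition (b), which is precisely what guarantees that $\rho(b_{e'})\xi^{-}$ is transverse to $\xi^{+}$, a ping-pong and singular-value estimate shows that a fixed continuous displacement function evaluated on $\rho(\tau(\gamma_{e'}))$ grows linearly in $|n_{e'}|$, and one checks this bound is uniform over $\rho\in K'$. Therefore $\tau(K')\cap K'\neq\emptyset$ bounds each $|n_{e'}|$, leaving only finitely many twists, which completes the proof. I expect this last step — extracting the linear growth of displacement under iterated twisting and making it uniform over the compact set — to be the crux, with the transversality clause in the definition of amalgam Anosov doing the essential work; the remaining ingredients (canonicity and quasiconvexity of the JSJ, the structure of automorphisms of graphs of groups, and the Labourie--Guichard--Wienhard theorem for the Fuchsian and rigid pieces) are assembly.
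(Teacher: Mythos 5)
Your overall skeleton — reduce to the finite-index subgroup $\Out_0(\Gamma)$, split into the part seen by the vertex-group mapping class groups and the Dehn twists along edges, and quote Labourie/Guichard--Wienhard proper discontinuity on the Anosov loci of the pieces — is the same as the paper's. But there are two genuine problems.

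First, you are working with the wrong definition of amalgam Anosov, so your argument is about a different set than $\Xaa$. The paper's definition imposes \emph{no} condition on rigid vertex groups (indeed, for a rigid $\Gamma$ one has $\Xaa=\X$, including the trivial representation, which your condition (a) would exclude), and its condition at a cyclic vertex group $\Gamma_v$ is not an ``edge transversality'' clause but the existence of a \emph{registering} free subgroup $H=\langle a_0,a_1,\dots,a_{n_v}\rangle$, with $a_0\in\Gamma_v$ and $a_i$ in the adjacent vertex groups, such that $\rho|_H$ is Anosov. Your conditions (a)+(b) describe a smaller set, closer to the paper's ``strongly amalgam Anosov'' locus, and a proof of proper discontinuity on that smaller set does not yield statement (2) for $\Xaa$; nor does your openness argument apply to the actual defining conditions (openness of $\Xaa$ comes from openness of $r_H^{-1}(\mathbb{X}_{\mathrm{A}}(H,G))$ over the finitely many cyclic vertices, taking a union over registering subgroups $H$).

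Second, even for your set, the step you yourself identify as the crux — that the displacement of $\rho(a_{e}\,c_e^{\,n}\,b_e\,c_e^{-n})$ grows linearly in $|n|$, uniformly over a compact family — is asserted, not proved. In higher rank this is not a routine ping-pong: making it precise requires biproximality of $\rho(c_e)$ together with quantitative $\epsilon$-proximality and transversality estimates of Benoist/Quint type (this is essentially the content of the projective Anosov Schottky machinery in the paper's appendix, which occupies several pages and invokes the Gu\'eritaud--Guichard--Kassel--Wienhard Anosov criterion). The registering-subgroup device exists precisely to avoid this computation: a finite-index subgroup of the twist group $\mathcal{T}_v$ preserves $H$ and injects into $\Out(H)$, and since $\rho|_H$ lies in $\mathbb{X}_{\mathrm{A}}(H,G)$ one simply quotes the strong form of proper discontinuity of $\Out(H)$ on $\mathbb{X}_{\mathrm{A}}(H,G)$ (orbits of compacta exit every compact subset of $\mathbb{X}(H,G)$). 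Without either that device or a worked-out singular-value estimate, the twist case — and hence the theorem — is not established.
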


Our definition of amalgam Anosov representations is based on Sela's canonical JSJ splitting of a one-ended, torsion-free hyperbolic group $\Gamma$, see Sela \cite{Sela} and Bowditch \cite{Bowditch}.
Roughly, a JSJ splitting is a graph of groups decomposition for $\Gamma$ such that each edge group is infinite cyclic and each vertex group is either maximal cyclic, Fuchsian, or rigid (i.e., admits no further $\Z$-splitting consistent with the given splitting). 
The JSJ splitting is preserved by every automorphism of $\Gamma$ (see Section \ref{sec:jsj} for a precise statement), and therefore may be used to analyze $\Out(\Gamma)$. In particular,
there is a finite index subgroup of $\Out(\Gamma)$ that is a central extension of the product of the mapping class groups of the Fuchsian vertex groups by
a free abelian group generated by twists in the cyclic vertex groups (see Sela \cite{Sela} and Levitt \cite{levitt}).

A representation $\rho:\Gamma\to G$ is \emph{amalgam Anosov} if (1) the restriction of $\rho$ to each Fuchsian vertex group is Anosov, and (2) for every cyclic 
vertex group $\Gamma_v$ there exists a free subgroup $H$ of $\Gamma$ ``registering'' $\Gamma_v$ such that $\rho|_H$ is Anosov.
Here $H$ is registering in the sense that, up to finite index, the group of twists in the cyclic vertex
group $\Gamma_v$ preserves $H$ (up to conjugacy) and acts effectively on $H$. See Section \ref{aareps} for a precise definition.

The key idea in the proof is that any sequence of distinct elements in $\Out(\Gamma)$ has a subsequence that ``projects to'' a sequence of
distinct elements in either the mapping class group of some Fuchsian vertex group $\Gamma_v$ or in the group of twists in some cyclic vertex group $\Gamma_v$.
In the first case, the fact that $\Out(\Gamma_v)$ acts properly discontinuously on ${\mathbb X}_{\mathrm{A}}(\Gamma_v,G)$ guarantees that any orbit of the
subsequence exits every compact subset of $\Xaa$, while in the second case we use the fact that $\Out(H)$ acts properly discontinuously on ${\mathbb X}_{\mathrm{A}}(H,G)$,
where $H$ is a subgroup registering $\Gamma_v$.

The condition that the restriction of $\rho$ to various subgroups is Anosov is used in two distinct ways. First, the fact that Anosov representations are
quasi-isometric embeddings (or, equivalently in this setting, are well-displacing) is the key ingredient in the proof of proper discontinuity.
Second, the fact that the set of Anosov representations is open, guarantees that $\Xaa$ is open. Neither fact relies on the choice of the pair of opposite
parabolic subgroups, so we may allow different choices of parabolic subgroups for different subgroups in the definition.

\medskip

One simple way to construct amalgam Anosov representations that are not Anosov is to glue together Anosov representations of vertex subgroups. We say that $\rho:\Gamma\to G$ is \emph{strongly amalgam Anosov} if there exists a pair $P^\pm$ of opposite parabolic subgroups of $G$ such that the restriction of $\rho$ to every rigid or Fuchsian vertex subgroup is \hbox{$(P^+,P^-)$-Anosov}. In the appendix, we prove:

\medskip\noindent
{\bf Theorem \ref{main-appendix}.}
{\em
Suppose that $\Gamma$ is a one-ended, torsion-free hyperbolic group and $G$ is a semisimple Lie group with finite center. If $\rho \in \X$ is strongly amalgam Anosov, then $\rho$ is amalgam Anosov.
}
\medskip

The key tool in the proof of Theorem \ref{main-appendix} is that, given a generic collection of finitely many biproximal elements of $\SL_d(\R)$, a group generated by sufficiently high powers of these elements is a projective Anosov Schottky group. This is a natural generalization of the classical fact that given any two hyperbolic elements of $\PSL_2(\C)$ with disjoint fixed point sets, the group generated by sufficiently high powers of these elements is a Schottky group. See Theorem \ref{thm:ConvexSchottky} for a precise statement. We prove Theorem \ref{thm:ConvexSchottky} using work of Benoist \cite{BenoistAnnals,Benoist}, Quint \cite{QuintDynam} and Gu\'eritaud--Guichard--Kassel--Wienhard \cite{GGKW}. 
Kapovich, Leeb and Porti \cite[Theorem 7.40]{KLP} earlier gave a proof of  a more general version of 
Theorem \ref{thm:ConvexSchottky} using different techniques. 

\medskip\noindent
\emph{Historical remarks:} Previously this subject was studied primarily in the case when $G=\PSL_2(\C)$. 
Minsky \cite{Minsky} showed that the space $PS(F_r)$ of primitive-stable representations of a free group $F_r$ into $\PSL_2(\C)$ is a domain of discontinuity for the action of $\Out(F_r)$ on $\mathbb{X}(F_r, \PSL_2(\C))$ properly containing the space of convex cocompact representations. 
This was the first example where $\Xa$ was shown to fail to be a maximal domain of discontinuity. Roughly, a representation $\rho\in\mathbb{X}(F_r,\PSL_2(\C))$ is primitive-stable 
if the restriction of the associated orbit map of the Cayley graph of $F_r$ into $\mathbb H^3$ is uniformly quasi-isometric on the axis of any primitive element of $F_r$.
Lee \cite{lee-ibundle,lee-compbody} found a domain of discontinuity for the action of $\Out(\Gamma)$ on $\mathbb{X}(\Gamma,\PSL_2(\C))$ that properly
contains ${\mathbb X}_{\mathrm{A}}(\Gamma,\PSL_2(\C))$ when $\Gamma$ is 
either the fundamental group of a closed nonorientable surface or a nontrivial free product of free groups and surface groups.

If $M$ is a compact hyperbolizable 3-manifold for which $\pi_1(M)$ is one-ended and not a surface group, then Canary and Storm \cite{Canary-Storm} again exhibited a domain of discontinuity for the action of $\Out(\pi_1(M))$ on $\mathbb{X}(\pi_1(M), \PSL_2(\C))$ that strictly contains $\mathbb{X}_{\mathrm{A}}(\pi_1(M), \PSL_2(\C))$.
However, Goldman \cite{goldman} conjectured that if $\Gamma$ is a closed orientable surface group, then quasifuchsian space $\mathbb{X}_{\mathrm{A}}(\Gamma, \PSL_2(\C))$ is a maximal domain of discontinuity for the action of $\Out(\Gamma)$ on $\mathbb{X}(\Gamma, \PSL_2(\C))$. It seems likely that if $\Gamma$ is not a closed orientable surface group, then $\mathbb{X}_{\mathrm{A}}(\Gamma, \PSL_2(\C))$ is not a maximal domain of discontinuity.

Our work generalizes the results and methods of Canary--Storm \cite{Canary-Storm} to the setting of representations of a word hyperbolic group $\Gamma$ into a semisimple Lie
group $G$ with finite center. The techniques are quite different than those used by Minsky \cite{Minsky} and Lee \cite{lee-compbody}, since the outer automorphism
group of a one-ended hyperbolic group and $\Out(F_r)$ can be structurally quite different. However, one sees a common link in the fact that amalgam Anosov
representations have the property that the restrictions of the orbit map of the Cayley graph of $\Gamma$ into the symmetric space associated to $G$ to
the Fuchsian vertex groups and to the registering subgroups are quasi-isometric embeddings. Therefore both amalgam Anosov and primitive-stable representations
are quasi-isometric on subsets of the Cayley graph which are large enough to establish proper discontinuity for the action, but not big enough
to guarantee that the representations are quasi-isometries on the entire Cayley graph.
In higher rank semisimple Lie groups the set of representations
that are quasi-isometric embeddings need not be open (see Gu\'eritaud--Guichard--Kassel--Wienhard \cite[Appendix A]{GGKW}), so if one attempted to
define an analogue of amalgam Anosov representations using only the language of quasi-isometric embeddings, one would likely
not obtain an open set. However, one could use recent work of Kapovich--Leeb--Porti \cite{KLP}
to give an alternate definition in the language of Morse embeddings of subgroups.

\medskip\noindent
\emph{Outline of the paper:}
In Section \ref{sec:anosov}, we give the definition of an Anosov representation and recall some of their basic properties.
In Section \ref{sec:jsj} we describe JSJ decompositions of one-ended hyperbolic groups and the associated structure theory of their outer automorphism groups, both originally due to Sela \cite{Sela}. In Section \ref{aareps},
we formally define amalgam Anosov representations and develop their basic properties. In Section \ref{mainproof}, we prove the main theorem. In Section \ref{sec:saa} we briefly introduce \emph{strongly} amalgam Anosov representations.
In Section \ref{sec:ex} we exhibit examples where $\Xa$ is a proper subset of $\Xaa$, examples of amalgam Anosov representations that are not strongly amalgam Anosov, and examples where $\Xaa$ is not a maximal domain of discontinuity. In the appendix, we establish Theorem \ref{main-appendix}.

\medskip\noindent
{\bf Acknowledgements:} The authors thank Fanny Kassel for conversations related to the appendix and
her work with Gu\'eritaud, Guichard, and Wienhard, and for suggesting that Lemma \ref{lem:ThanksFanny} should be true. The authors also thank the referee for many helpful comments on the original version of the manuscript.

\medskip

This material is based upon work supported by the National Science Foundation under Grant Numbers DMS-1045119, DMS-130692, and DMS-1361000. The authors also acknowledge support from the GEAR network (NSF grants DMS-1107452, DMS-1107263, and DMS-1107367).

\section{Anosov representations}\label{sec:anosov}

In this section, we recall the definition and basic properties of Anosov representations, see Labourie \cite{Labourie,Labourie2} and Guichard--Wienhard \cite{Guichard-Wienhard} for further details. We finish the section with a brief discussion of the special case of projective Anosov representations.

\subsection{Definitions}

We first recall that Gromov \cite{Gromov} defined a \emph{geodesic flow} $(U_0\Gamma,\{\phi_t\})$ associated with a hyperbolic group $\Gamma$. See Champetier \cite{Champetier} or Mineyev \cite[Thm.\ 60]{Mineyev} for further details. We follow Mineyev \cite{Mineyev}, since his definition gives properties best suited to our applications.

Let $\partial_\infty \Gamma^{(2)}$ be the space of pairs of distinct points on the Gromov boundary $\partial_\infty \Gamma$ of $\Gamma$. Mineyev \cite{Mineyev} showed that there exists a flow $\{\tilde\phi_t\}$ on
\[
\widetilde{U_0\Gamma} = \partial_\infty \Gamma^{(2)}\times\R,
\]
where $\R$ acts by translation on the real factor, and a metric on $\widetilde{U_0\Gamma}$ such that $\{\tilde\phi_t\}$ satisfies the following properties:
\begin{enumerate}

\item
$\Gamma$ acts cocompactly and by isometries on $\widetilde{U_0\Gamma}$ so that $\gamma\in\Gamma$ takes the leaf $(z^+,z^-)\times\R$ to the leaf $(\gamma(z^+),\gamma(z^-))\times \R$;

\item
the action of $\R$ by translation along the flow is bi-Lipschitz;

\item
the actions of $\Gamma$ and $\R$ commute;

\item
$t \mapsto \tilde\phi_t(y)$ is an isometric embedding of $\R$ into $\widetilde{U_0\Gamma}$ for all $y \in \widetilde{U_0\Gamma}$.

\end{enumerate}
Thus the flow descends to a flow $\{\phi_t\}$ on $U_0\Gamma=\widetilde{U_0\Gamma} / \Gamma$. While the construction depends on many choices, any flow space with the above properties will suffice to serve as the geodesic flow in the definition of an Anosov representation.
For example, if $\Gamma$ is the fundamental group of a closed negatively curved manifold $M$, then one may take $\widetilde{U_0\Gamma}$
to be the geodesic flow on the unit tangent bundle $T^1M$.

Let $G$ be a real semisimple Lie group with finite center, $(P^+, P^-)$ a pair of opposite parabolic subgroups of $G$, and $\mathcal{F}^\pm$ the (compact) flag variety $G / P^\pm$. Consider the Levi subgroup $L = P^+ \cap P^-$ associated with $(P^+, P^-)$, and let \hbox{$X = X(P^+, P^-)$ }be the space $G / L$, considered as an open subspace of \hbox{$\mathcal{F}^+ \times \mathcal{F}^-$}. This naturally equips $X$ with a pair $E^\pm$ of transverse distributions. If $\rho:\Gamma\to G$ is a representation, let $\widetilde {\mathcal{X}_\rho}=\widetilde{U_0\Gamma}\times X$ be the trivial $X$-bundle over $\widetilde{U_0\Gamma}$. Then $\Gamma$ acts on $\widetilde {\mathcal{X}_\rho}$ so that \hbox{$\gamma(y,x)=(\gamma(y),\rho(\gamma)(x))$} for all $\gamma\in\Gamma$. The quotient 
\[
\mathcal{X}_\rho=\Gamma\backslash\widetilde {\mathcal{X}_\rho}
\]
is a bundle over $U_0\Gamma$ with fiber $X$. The distributions $E^\pm$ on $X$ induce associated distributions, also called $E^\pm$, on $\mathcal{X}_\rho$.

A representation $\rho : \Gamma \to G$ is $(P^+, P^-)$-\emph{Anosov} if there exists a continuous section $\sigma : U_0\Gamma \to \mathcal{X}_\rho$ such that:
\begin{enumerate}[(i)]

\item
The section $\sigma$ is flat along $\R$-orbits.

\item
The (lifted) action of $\phi_t$ on the pullback $\sigma^* E^+$ (resp.\ $\sigma^* E^-$) dilates (resp.\ contracts).

\end{enumerate}
A section satisfying the above conditions will be called an \emph{Anosov section}. 

\subsection{Basic properties}

We say that $\rho : \Gamma \to G$ is an {\em Anosov} representation if there exist a pair $P^\pm$ of proper opposite parabolic subgroups of $G$ such that $\rho$ is $(P^+,P^-)$-Anosov with respect to the pair $P^\pm$.
The following summarizes the key properties of Anosov representations established by Labourie \cite{Labourie} and Guichard--Wienhard \cite{Guichard-Wienhard}.

\begin{Theorem}\label{thm:Guichard-Wienhard}
Let $\Gamma$ be a torsion-free hyperbolic group and $G$ be a semisimple Lie group with finite center.
\begin{enumerate}

\item
The set $\Xa$ of Anosov representations of $\Gamma$ into $G$ is open in $\X$.

\item
Every representation $\rho \in \Xa$ is discrete and faithful.

\item
Every representation $\rho \in \Xa$ is a quasi-isometric embedding (with respect to any word metric on $\Gamma$ and any left-invariant Riemannian metric on $G$).

\item
If $G$ has real rank $1$, then $\rho\in\X$ is Anosov if and only if it is convex cocompact.

\item
Every $\rho \in \Xa$ is well-displacing.

\end{enumerate}
\end{Theorem}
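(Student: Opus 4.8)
The plan is to treat this as a compilation of the foundational results of Labourie \cite{Labourie} and Guichard--Wienhard \cite{Guichard-Wienhard}, organized around the reformulation of the Anosov condition in terms of equivariant limit maps together with a uniform contraction property. First I would show that an Anosov section $\sigma$ for a $(P^+,P^-)$-Anosov representation $\rho$ determines continuous $\rho$-equivariant maps $\xi^\pm \colon \partial_\infty\Gamma \to \mathcal F^\pm$ that are transverse (so $(\xi^+(x),\xi^-(y))\in X$ whenever $x\ne y$) and dynamics-preserving: for each infinite-order $\gamma$, the attracting/repelling fixed points of $\gamma$ on $\partial_\infty\Gamma$ go to attracting/repelling fixed points of $\rho(\gamma)$ on $\mathcal F^\pm$. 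Injectivity of $\xi^\pm$ together with the dynamics-preserving property immediately gives that $\rho$ is faithful — a nontrivial element of a torsion-free hyperbolic group has infinite order, so it would map to an element with a genuine attracting fixed point — and, combined with part (3), also yields discreteness.

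For the quasi-isometric embedding statement (3), I would exploit that the flow $\{\phi_t\}$ on $U_0\Gamma$ is cocompact and that the dilation/contraction of $\phi_t$ on $\sigma^*E^\pm$ is uniform: there are constants $c>0,\ C\ge 1$ so that the return cocycle around the periodic orbit of length $\ell$ carrying the conjugacy class of $\gamma$ has norm at least $e^{c\ell}/C$. Since that orbit length is comparable to the stable translation length $|\gamma|_\infty$ in $\Gamma$ (from Mineyev's construction and cocompactness), this forces a gap in the relevant singular values of $\rho(\gamma)$ growing linearly in $|\gamma|_\infty$, hence the translation length of $\rho(\gamma)$ in $G/K$ grows at least linearly in $|\gamma|_\infty$. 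The orbit map $\Gamma \to G/K$ is automatically coarsely Lipschitz, so to promote this to a quasi-isometric embedding one applies the Milnor--Švarc argument to the $\Gamma$-action on $\widetilde{U_0\Gamma}$, using that $\sigma$ is a bounded section of a bundle over a compact base. The same periodic-orbit estimate is exactly what is needed for part (5): well-displacing is the statement that the conjugacy-length function of $\Gamma$ is comparable to the translation-length function of $\rho$ in $G/K$, the upper bound being the easy coarsely-Lipschitz direction and the lower bound being the contraction estimate.

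Parts (1) and (4) I would do last. For openness: the Anosov condition says the flow on $\mathcal X_\rho$ admits a section along which the transverse distributions $E^\pm$ dilate/contract; since the base $U_0\Gamma$ is compact and this partially hyperbolic structure varies continuously, structural stability (Labourie's original argument) shows a $C^0$-small perturbation of $\rho$ still admits such a section, and Guichard--Wienhard give an alternative proof via the dynamical characterization. For part (4), when $G$ has real rank one the only proper opposite parabolics are conjugate to a fixed minimal pair, so $\mathcal F^+=\mathcal F^-=\partial_\infty(G/K)$ and $X=G/L$ is the space of distinct boundary pairs, i.e.\ the geodesics of $G/K$; an Anosov section is then a flow-equivariant assignment of a geodesic of $G/K$ to each geodesic of $U_0\Gamma$, and the contraction property makes the orbit map a quasi-isometric embedding into the rank-one symmetric space, one of the standard characterizations of convex cocompactness. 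Conversely, a convex cocompact $\rho$ has limit set equivariantly homeomorphic to $\partial_\infty\Gamma$ and the classical geodesic flow on the cocompact convex core is Anosov, which one transports to the required section.

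The step I expect to be the main obstacle is part (3): rigorously passing from the uniform contraction of the abstract flow-bundle to a Cartan-projection (singular value) estimate for $\rho(\gamma)$, and then upgrading from the periodic orbits to all of $\Gamma$. Once the limit maps and the quasi-isometric embedding are in place, faithfulness, discreteness, well-displacing, the rank-one dictionary, and even openness are comparatively formal.
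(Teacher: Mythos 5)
The paper does not actually prove this theorem: it is presented explicitly as a summary of results established by Labourie \cite{Labourie} and Guichard--Wienhard \cite{Guichard-Wienhard}, and the authors simply cite those references (parts (1)--(3) and (5) are essentially \cite[Thm.~5.3, Thm.~5.13]{Guichard-Wienhard} together with Labourie's original stability argument, and part (4) is \cite[Prop.~5.15]{Guichard-Wienhard}). So there is no in-paper argument to compare yours against; what you have written is a reconstruction of the proofs in the cited literature, and as an outline it is faithful to how those proofs actually go: the limit maps $\xi^\pm$ and their transversality and dynamics-preserving properties give faithfulness, the uniform contraction over the compact base $U_0\Gamma$ gives the quasi-isometric embedding and well-displacing properties, structural stability gives openness, and the rank-one dictionary between parabolic boundaries and $\partial_\infty(G/K)$ gives the equivalence with convex cocompactness. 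You have also correctly identified where the real analytic content lies, namely part (3).

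That said, your proposal is a plan rather than a proof at exactly the point you flag. Two steps deserve emphasis. First, the periodic-orbit contraction estimate controls only translation lengths, i.e.\ conjugacy classes; to get a quasi-isometric embedding one must bound $d(\rho(\gamma)o,o)$ below linearly in the word length $|\gamma|$ for \emph{every} $\gamma$, which in \cite{Guichard-Wienhard} is done by running the contraction estimate along arbitrary (not just periodic) flow lines of $\widetilde{U_0\Gamma}$ and comparing with the Cartan projection; invoking Milnor--\v{S}varc on $\widetilde{U_0\Gamma}$ alone does not supply this. Second, passing from ``gap in the top two singular/eigenvalues grows linearly'' to ``translation length in $G$ grows linearly'' requires the standard comparison between the Jordan projection and displacement in a left-invariant metric; this is routine but should be stated. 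Neither point is an error in your plan -- both are carried out in the references -- but they are the substance of the theorem rather than formalities.
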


Recall that a representation is \emph{well-displacing} if there exists $(J,B)$ so that
\[
\frac{1}{J}||\gamma||-B\le \ell_\rho(\gamma)\le J ||\gamma||+B,
\]
where $||\gamma||$ denotes the translation length of $\gamma$ on the Cayley graph of $\Gamma$, with respect to some fixed generating set, and $\ell_\rho(\gamma)$ denotes the minimal translation length of the action of $\rho(\gamma)$ on $G$. 

The fact that Anosov representations are well-displacing (in fact, uniformly well-displacing on compact subsets of $\Xa$) can be used to show that $\Out(\Gamma)$ acts properly discontinuously on $\Xa$. Labourie \cite{Labourie2} proved this in the setting of the Hitchin component of representations of a closed surface group into $\PSL_n(\R)$, while Guichard--Wienhard \cite{Guichard-Wienhard} established it in full generality (see also Canary \cite{canary-survey}). Recall that the action of ${\rm Aut}(\Gamma)$ on ${\rm Hom}(\Gamma,G)$ given by
$\phi(\rho)=\rho\circ\phi^{-1}$ (for all $\phi\in {\rm Aut}(\Gamma)$ and $\rho\in {\rm Hom}(\Gamma,G)$) descends to an
action of ${\rm Out}(\Gamma)$ on $\X$.

In fact, we will need the following strengthening of the fact that $\Out(\Gamma)$ acts properly discontinuously on $\Xa$.
We recall that a sequence $\{A_n\}_{n\in\mathbb N}$ of subsets of a topological space $Y$ is said to \emph{exit every compact subset of} $X$ if given any compact subset $R$ of $Y$, there exists $N$ such that $A_n \cap R = \emptyset$ for $n\ge N$. 
Notice that $\Out(\Gamma)$ acts properly discontinuously on $\Xa$ if and only if whenever $\{\alpha_j\}$ is a sequence of distinct elements 
in $\Out(\Gamma)$ and $R$ is a compact subset of $\Xa$, then $\{\alpha_j(R)\}$ exits every compact subset of $\Xa$.

\begin{Theorem}\label{thm:anosovpd}
{\rm (Guichard--Wienhard \cite[Cor.\ 5.4]{Guichard-Wienhard})}
If $\Gamma$ is a torsion-free word hyperbolic group, then $\Out(\Gamma)$ acts properly discontinuously on $\Xa$. Moreover, if $R$ is a compact set in $\Xa$ and $\{\alpha_j\}$ is a sequence of distinct elements in $\Out(\Gamma),$ then $\{\alpha_j(R)\}$ exits every compact subset of 
the entire representation space $\X$.
\end{Theorem}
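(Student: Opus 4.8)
The plan is to reduce the statement to two ingredients: that Anosov representations are uniformly well-displacing on compact subsets of $\Xa$, and a finiteness statement for $\Out(\Gamma)$. Proper discontinuity on $\Xa$ itself is Guichard--Wienhard \cite{Guichard-Wienhard}, so the substance is the ``moreover''. I would argue by contradiction: suppose $R\subseteq\Xa$ is compact, $\{\alpha_j\}$ is a sequence of distinct elements of $\Out(\Gamma)$, and $\{\alpha_j(R)\}$ fails to exit some compact $D\subseteq\X$. Passing to a subsequence, choose $\rho_j\in R$ with $\alpha_j(\rho_j)\in D$, and lifts $\phi_j\in\mathrm{Aut}(\Gamma)$ of $\alpha_j$, so that $\alpha_j(\rho_j)=\rho_j\circ\phi_j^{-1}$ and hence $\ell_{\alpha_j(\rho_j)}(\gamma)=\ell_{\rho_j}\big(\phi_j^{-1}(\gamma)\big)$ for every $\gamma\in\Gamma$. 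Here $\|\gamma\|$ denotes translation length on the Cayley graph, and I will use that $\|\gamma\|$ is bounded below by a positive constant for $\gamma\neq 1$ (as $\Gamma$ is hyperbolic and torsion-free).

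I would then bring in two quantitative estimates. Since $R$ is a \emph{compact} subset of $\Xa$, Theorem~\ref{thm:Guichard-Wienhard}(5) together with the openness of $\Xa$ yields uniform well-displacing constants $J\ge 1$, $B_0\ge 0$ with $\tfrac{1}{J}\|\gamma\|-B_0\le\ell_\rho(\gamma)$ for all $\rho\in R$ and all $\gamma$. Separately---and this is the only place where it matters that $D$ may not lie inside $\Xa$---for any compact $D\subseteq\X$ there is a constant $C_0$ with $\ell_\rho(\gamma)\le C_0\|\gamma\|$ for all $[\rho]\in D$ and all $\gamma$; this just says the orbit maps of the representations in $D$ are uniformly Lipschitz, and holds because the relevant Lipschitz constant is a conjugation-invariant function of $\rho$, hence descends to $\X$, and is bounded on the compact set $D$. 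Combining these with the identity above,
\[
\tfrac{1}{J}\|\phi_j^{-1}(\gamma)\|-B_0\le\ell_{\rho_j}\big(\phi_j^{-1}(\gamma)\big)=\ell_{\alpha_j(\rho_j)}(\gamma)\le C_0\|\gamma\|,
\]
so $\|\phi_j^{-1}(\gamma)\|\le JC_0\|\gamma\|+JB_0$ for all $\gamma$ and all $j$; that is, every $\alpha_j^{-1}$ lies in the set $\mathcal{A}(C,B)=\{\alpha\in\Out(\Gamma):\|\alpha(\gamma)\|\le C\|\gamma\|+B\ \text{for all}\ \gamma\}$ with $C=JC_0$, $B=JB_0$.

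The theorem thus reduces to the following finiteness statement, which I regard as the crux: for a torsion-free hyperbolic group $\Gamma$ and any $C\ge 1$, $B\ge 0$, the set $\mathcal{A}(C,B)$ is finite. Granting this, $\{\alpha_j^{-1}\}$ is finite, contradicting distinctness. (The first assertion of the theorem drops out of the same computation with $D$ replaced by $R$: then $\alpha_j(\rho_j)$ is itself well-displacing, one obtains the \emph{two-sided} bound $\tfrac{1}{C}\|\gamma\|-B\le\|\phi_j^{-1}(\gamma)\|\le C\|\gamma\|+B$, and finiteness of the corresponding two-sided set is precisely the finiteness fact underlying the usual proof of proper discontinuity on $\Xa$.) The difficulty in the ``moreover'' is that compactness of $D$ in $\X$, rather than in $\Xa$, gives only the \emph{upper} bound on translation lengths---a representation in $D$ may have vanishing translation-length function---so we genuinely need the one-sided form of the finiteness lemma. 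To prove it I would, after absorbing $B$ using the lower bound on $\|\cdot\|$, replace $\mathcal{A}(C,B)$ by $\mathcal{A}_K=\{\alpha:\|\alpha(\gamma)\|\le K\|\gamma\|\ \text{for all}\ \gamma\}$, note that this hypothesis bounds, uniformly over $\mathcal{A}_K$, the contraction rates of the induced boundary homeomorphism $\partial\phi$ near the fixed points of infinite-order elements, deduce that the family $\{\partial\phi\}$ is equicontinuous and hence precompact in $\mathrm{Homeo}(\partial_\infty\Gamma)$, and finally invoke the rigidity of the boundary action of $\Gamma$ (distinct outer automorphisms induce boundary homeomorphisms that stay a definite distance apart) to conclude $\mathcal{A}_K$ is finite; when $\Gamma$ is elementary, $\Out(\Gamma)$ is already finite. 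Verifying this ``one-sided properness'' of the action of $\Out(\Gamma)$ on translation-length functions is where I expect the main obstacle to lie---for a closed surface group it is the statement that an upper bound on the lengths of all closed geodesics confines a hyperbolic metric to a compact part of Teichm\"uller space, via the collar lemma---and it may alternatively be extracted from the methods behind Guichard--Wienhard's Corollary~5.4.
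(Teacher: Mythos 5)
Your overall architecture is sound and your reduction is valid, but it is organized quite differently from the paper's argument, and it routes the entire difficulty into a finiteness lemma that you then only sketch. The paper's proof (given in the remark following the theorem) is much shorter: from the compactness of $R$ in $\Xa$ it extracts uniform well-displacing constants $(J,B)$, invokes the standard fact that for a sequence of distinct elements $\{\phi_n\}$ of $\Out(\Gamma)$ there is a single element $\beta\in\Gamma$ with $\|\phi_n^{-1}(\beta)\|\to\infty$ (after passing to a subsequence), concludes that $\inf_{\rho\in R}\ell_{\rho}(\phi_n^{-1}(\beta))\to\infty$, and finishes by noting that the single continuous conjugation-invariant function $[\rho]\mapsto\ell_\rho(\beta)$ is bounded on any compact subset $D$ of $\X$. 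This completely sidesteps the two pieces of machinery you introduce: the uniform Lipschitz bound $\ell_\rho(\gamma)\le C_0\|\gamma\|$ over all of $D$ and all $\gamma$ simultaneously (which is true, but your justification is glib --- the conjugation-invariant Lipschitz constant is only upper semicontinuous on $\X$, not continuous, and you need to say why that still gives boundedness on compacts), and the ``one-sided'' finiteness of $\mathcal{A}(C,B)$.

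The weak link is your proof of the crux lemma. The statement that $\mathcal{A}(C,B)$ is finite is true, but your proposed route through equicontinuity of boundary homeomorphisms and ``rigidity of the boundary action'' is vague at the two places where it matters: it is not clear that a one-sided bound on translation lengths controls contraction rates of $\partial\phi$ uniformly, and the assertion that distinct outer automorphisms induce boundary maps staying a definite distance apart in $\mathrm{Homeo}(\partial_\infty\Gamma)$ is itself a nontrivial discreteness claim that you would have to prove. The efficient fix is to invoke the standard combinatorial fact (used implicitly by the paper via Canary's survey): for a torsion-free hyperbolic group $\Gamma$ and any $K>0$, only finitely many classes in $\Out(\Gamma)$ satisfy $\|\phi(\gamma)\|\le K$ for all $\gamma$ in a fixed finite subset of $\Gamma$ (generators and their pairwise products suffice). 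This fact immediately gives both your finiteness of $\mathcal{A}(C,B)$ and the paper's ``$\exists\,\beta$ with $\|\phi_n^{-1}(\beta)\|\to\infty$,'' and once you have it you may as well use it in the paper's one-element form, which makes the uniform Lipschitz estimate on $D$ unnecessary.
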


\begin{Remark}
Although the referenced statements only give proper discontinuity, it is easy to see that the proof yields the exiting behavior. We sketch the argument (see Canary \cite[Thm.\ 6.2]{canary-survey}). If $R$ is a compact subset of $\Xa$, then there exists $(J,B)$ such that every $\rho\in R$ is $(J,B)$-well-displacing. If $\{\phi_n\}$ is a sequence of distinct elements of $\Out(\Gamma)$, then there exists $\beta\in \Gamma$ so that $||\phi_n^{-1}(\beta)||\to\infty$. Therefore, $\inf_{\rho\in R}\ell_\rho(\phi_n^{-1}(\beta))\to\infty$, 
which implies that $\phi_n(R)$ exits every compact subset of $\X$.
\end{Remark}

We also observe below that the restriction of an Anosov representation to a hyperbolic subgroup is Anosov
if and only if the subgroup is quasiconvex.

\begin{Lemma}\label{lem:qconvexAnosov}
Suppose that $\Gamma$ is a word hyperbolic group, $G$ is a semisimple Lie group with finite center, and $P^\pm$ is a pair of proper opposite parabolic subgroups. If $\rho : \Gamma \to G$ is a $(P^+,P^-)$-Anosov representation and $\Lambda$ is a word hyperbolic subgroup of $\Gamma$, then $\rho |_\Lambda$ is $(P^+,P^-)$-Anosov
if and only if $\Lambda$ is a quasiconvex subgroup of $\Gamma$.
\end{Lemma}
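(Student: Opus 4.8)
The plan is to prove both implications by exploiting the geometric characterization of Anosov representations via the geodesic flow, together with the fact (Theorem \ref{thm:Guichard-Wienhard}(3)) that Anosov representations are quasi-isometric embeddings. For the ``only if'' direction, suppose $\rho|_\Lambda$ is $(P^+,P^-)$-Anosov. Then the orbit map $\Lambda \to G$, hence $\Lambda \to G/K$ for a maximal compact $K$, is a quasi-isometric embedding; but since $\rho$ itself is Anosov, the orbit map $\Gamma \to G/K$ is also a quasi-isometric embedding. Composing, the inclusion $\Lambda \hookrightarrow \Gamma$ is a quasi-isometric embedding with respect to the word metrics, which is exactly the statement that $\Lambda$ is quasiconvex (equivalently, undistorted) in $\Gamma$. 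One should note here that $\Lambda$ being Anosov presupposes it is word hyperbolic, which is part of the hypothesis, so the domain of the orbit map is well-behaved.

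For the ``if'' direction, assume $\Lambda$ is quasiconvex in $\Gamma$. The first key point is that a quasiconvex subgroup of a hyperbolic group is itself hyperbolic and its Gromov boundary $\partial_\infty\Lambda$ embeds $\Lambda$-equivariantly as a closed subset of $\partial_\infty\Gamma$; moreover the geodesic flow space $\widetilde{U_0\Lambda} = \partial_\infty\Lambda^{(2)}\times\R$ for $\Lambda$ maps $\Lambda$-equivariantly and quasi-isometrically into $\widetilde{U_0\Gamma}$, and the $\Lambda$-action on $\partial_\infty\Lambda^{(2)}\times\R$ is cocompact. The second key point is that the Anosov section $\sigma: U_0\Gamma \to \mathcal{X}_\rho$ for $\rho$ can be restricted: pulling back along $\widetilde{U_0\Lambda}\hookrightarrow\widetilde{U_0\Gamma}$ gives a $\Lambda$-equivariant section $\widetilde\sigma|_{\widetilde{U_0\Lambda}}$ of $\widetilde{U_0\Lambda}\times X$, which descends to a continuous section $\sigma_\Lambda: U_0\Lambda \to \mathcal{X}_{\rho|_\Lambda}$ because the $\Lambda$-action on $U_0\Lambda$ is cocompact. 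This section is automatically flat along $\R$-orbits since $\sigma$ is and the flow on $\widetilde{U_0\Lambda}$ is the restriction of the flow on $\widetilde{U_0\Gamma}$.

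The remaining point is the contraction/dilation property (condition (ii)). This is where the only subtlety lies: the flow parametrization on $\widetilde{U_0\Lambda}$ need not agree on the nose with the one inherited from $\widetilde{U_0\Gamma}$, only up to a bi-Lipschitz reparametrization (both are additive flows on $\partial_\infty\Lambda^{(2)}\times\R$, and by the Mineyev axioms the $\R$-parametrizations differ by a bi-Lipschitz change). Since the dilation/contraction condition in the definition of Anosov is insensitive to bi-Lipschitz time changes — exponential contraction remains exponential contraction after composing with a bi-Lipschitz function of $t$ — the pulled-back distributions $\sigma_\Lambda^* E^\pm$ still dilate, resp.\ contract, under the $\Lambda$-geodesic flow. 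Hence $\sigma_\Lambda$ is an Anosov section and $\rho|_\Lambda$ is $(P^+,P^-)$-Anosov. I expect the main obstacle to be packaging this reparametrization argument cleanly: one must verify that the embedding $\widetilde{U_0\Lambda}\hookrightarrow\widetilde{U_0\Gamma}$ respects the flows up to bi-Lipschitz time change and that cocompactness of the $\Lambda$-action descends the section continuously; both follow from quasiconvexity and the Mineyev axioms, but deserve a careful statement. Alternatively, if one prefers to avoid reparametrization issues entirely, one can instead appeal to the characterization of Anosov representations in terms of limit maps and the existence of a dominated splitting / contraction along $(\epsilon,\epsilon)$-sequences, and simply restrict the $\rho$-equivariant limit maps $\xi^\pm:\partial_\infty\Gamma\to\mathcal{F}^\pm$ to the closed invariant subset $\partial_\infty\Lambda$, using that $\rho|_\Lambda$ is a quasi-isometric embedding (which holds because $\Lambda$ is undistorted and $\rho$ is a quasi-isometric embedding) together with Guichard--Wienhard's equivalent formulation of the Anosov property.
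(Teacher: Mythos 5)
Your proof is correct and follows essentially the same route as the paper: the ``only if'' direction is verbatim the paper's argument (both $\rho$ and $\rho|_\Lambda$ are quasi-isometric embeddings into $G$, so $\Lambda$ is undistorted, hence quasiconvex), and the ``if'' direction likewise restricts the lifted Anosov section to $\widetilde{U_0\Lambda}=\partial_\infty\Lambda^{(2)}\times\R\subset\widetilde{U_0\Gamma}$ and descends it. The only cosmetic difference is that the paper sidesteps your reparametrization concern entirely by observing that the restricted flow space already satisfies Mineyev's axioms (1)--(4) and may therefore be taken \emph{as} the geodesic flow of $\Lambda$, so no comparison with an independently constructed flow for $\Lambda$ is needed.
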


\begin{proof}
We first suppose that $\Lambda$ is quasiconvex. There is a continuous injection $\eta : \partial_\infty \Lambda \to \partial_\infty \Gamma$, called the \emph{Cannon--Thurston map}. Consider the subset $\widetilde{U_0\Lambda}=\partial\Lambda^{(2)}\times\R$ of $\widetilde{U_0\Gamma}$. The action of $\Gamma$ on $ \widetilde{U_0\Gamma}$ restricts to an action of $\Lambda$ on $ \widetilde{U_0\Lambda}$ with properties (1)-(4) of Section 2.1. We can therefore regard $U_0\Lambda= \widetilde{U_0\Lambda}/\Lambda$ as the geodesic flow of $\Lambda$. 

Let $\sigma:U_0\Gamma\to \mathcal{X}_\rho$ be an Anosov section. Then $\sigma$ lifts to a section $\tilde\sigma:\widetilde{U_0\Gamma}\to\widetilde{\mathcal{X}_\rho}$. One then checks that the restriction of $\tilde\sigma$ to $\widetilde{U_0\Lambda}$ descends to an Anosov section of $\mathcal{X}_{\rho|_\Lambda}$. Therefore, $\rho|_\Lambda$ is \hbox{$(P^+,P^-)$-Anosov}.

Conversely, suppose that $\rho_\Lambda$ is $(P^+,P^-)$-Anosov. Since $\rho$ quasi-isometrically embeds both $\Lambda$ and $\Gamma$ into $G$, it follows
that $\Lambda$ is quasi-isometrically embedded in $\Gamma$, hence is quasiconvex.
\end{proof}

\subsection{Projective Anosov representations}

If $G= \SL_d(\R)$, $P^+$ is the stabilizer of a line in $\R^d$ and $P^-$ is the stabilizer of a complementary hyperplane, 
then a $(P^+,P^-)$-Anosov representation $\rho:\Gamma\to \SL_d(\R)$ is 
called \emph{projective Anosov}. 

If $\rho:\Gamma\to \SL_d(\R)$ is a projective Anosov representation, then 
\[
G/L\subset G/P^+\times G/P^- =\RPd\times\RPddual
\]
and the associated section
$\sigma:U_0\Gamma\to \mathcal{X}_\rho$ lifts to a section 
\[
\tilde\sigma:\widetilde{U_0\Gamma}\to \widetilde{X}_\rho=\widetilde{U_0\Gamma}\times G/L\subset \widetilde{U_0\Gamma}\times\RPd\times\RPddual.
\]
The defining properties of an Anosov section imply that
\[
\tilde\sigma (x,y,t)=((x,y,t),\xi(x),\theta(y))
\]
where
\[
\xi:\partial\Gamma\to \RPd\quad\textrm{and}\quad \theta:\partial\Gamma\to\RPddual.
\]
are $\rho$-equivariant continuous injective maps.
Moreover, since $\tilde\sigma(\widetilde{U_0\Gamma})\subset \widetilde{U_0\Gamma}\times G/L$, the maps $\xi$ and $\theta$ are \emph{transverse}, i.e., if $x$ and $y$ are distinct points in $\partial\Gamma$, then $\xi(x)$ and $\theta(y)$ span $\R^d$. Notice that here we are making use of the canonical identification of $\RPddual$ with the Grassmannian of hyperplanes in $\R^d$. (See Guichard--Wienhard \cite[Sec. 2.2]{Guichard-Wienhard} for details.)

In many situations, the existence of transverse $\rho$-equivariant limit maps is enough to guarantee that $\rho$ is projective Anosov. For example, we have the following.

\begin{Proposition}{\rm (Guichard--Wienhard \cite[Prop.\ 4.10]{Guichard-Wienhard})}
\label{convex irreducible}
If $\Gamma$ is a word hyperbolic group, $\rho:\Gamma\to \SL_d(\R)$ is an irreducible representation, and there exist transverse $\rho$-equivariant continuous injections $\xi:\partial\Gamma\to\RPd$ and $\theta:\partial\Gamma\to\RPddual$, then $\rho$ is projective Anosov.
\end{Proposition}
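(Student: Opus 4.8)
The plan is to reduce the statement to a dynamical/contraction criterion for Anosov representations by constructing an Anosov section out of the given equivariant maps, and then verify the contraction/dilation property using irreducibility. First I would form the candidate section: given the transverse $\rho$-equivariant continuous injections $\xi:\partial\Gamma\to\RPd$ and $\theta:\partial\Gamma\to\RPddual$, transversality means that for distinct $x,y\in\partial\Gamma$ the line $\xi(x)$ is not contained in the hyperplane $\theta(y)$, so $(\xi(x),\theta(y))$ lies in the open subset $G/L\subset\RPd\times\RPddual$. Hence the formula $\tilde\sigma(x,y,t)=((x,y,t),\xi(x),\theta(y))$ defines a continuous, $\Gamma$-equivariant, flow-invariant section of $\widetilde{U_0\Gamma}\times G/L$, which descends to a continuous section $\sigma:U_0\Gamma\to\mathcal{X}_\rho$ that is automatically flat along $\R$-orbits (property (i)). So only the dilation/contraction property (ii) remains to be checked.

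Next I would set up the linear-algebraic meaning of (ii) for the projective case. Pulling back $E^+$ and $E^-$ along $\sigma$ gives line bundles over $U_0\Gamma$ whose fibers over a point projecting to $(x,y,t)$ are naturally $\mathrm{Hom}(\theta(y),\xi(x))$ and $\mathrm{Hom}(\xi(x),\R^d/\theta(y))$ (using transversality to identify $\R^d=\xi(x)\oplus\theta(y)$); condition (ii) says that, in a continuous family of norms on these bundles, the flow expands $\sigma^*E^+$ and contracts $\sigma^*E^-$ exponentially. The content is then: for every geodesic $(x,y)$ the ratio of the norm of $\xi(x)$ to the ``size'' of the hyperplane $\theta(y)$ decays exponentially as one flows toward $y$. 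By cocompactness of the $\Gamma$-action on $\widetilde{U_0\Gamma}$, it suffices to establish a uniform gap on a compact fundamental domain together with the equivariance, i.e., to control $\log\frac{\sigma_1(\rho(\gamma))}{\sigma_2(\rho(\gamma))}$ (first two singular values) along the sequence of group elements tracking a flow line, uniformly in the basepoint.

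The main obstacle — and the only place irreducibility enters — is converting the \emph{existence} of the continuous transverse equivariant maps into the \emph{uniform exponential} gap. The strategy is a compactness-and-contradiction argument: if the flow failed to contract $\sigma^*E^-$ uniformly, one would extract, via cocompactness, a sequence $\gamma_n\to\infty$ in $\Gamma$ for which the Cartan projection does not have the first two singular-value exponents separating linearly in $n$; passing to limits of the normalized matrices $\rho(\gamma_n)/\|\rho(\gamma_n)\|$ one obtains a nonzero limit operator $\pi$ of bounded rank whose image and kernel are constrained by $\xi(\lim\gamma_n)$ and $\theta(\lim\gamma_n^{-1})$. One then argues that the Zariski closure of $\rho(\Gamma)$ — which acts irreducibly by hypothesis — cannot preserve the proper subspace structure forced by such a degenerate limit, i.e., irreducibility rules out the ``no-gap'' scenario and forces a definite, hence (again by cocompactness) uniform, spectral gap. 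Combined with the Gromov-hyperbolic geometry of $\partial\Gamma^{(2)}$ — which guarantees that flowing a bounded amount corresponds to multiplying by a bounded word-length amount and that the $\gamma_n$ genuinely escape to infinity — this upgrades to the required exponential rates, establishing (ii). I would finish by invoking the definition: $\sigma$ is then an Anosov section, so $\rho$ is projective Anosov. The delicate bookkeeping is matching the continuous family of norms on the bundles with the singular-value estimates uniformly over the compact base, but once the irreducibility-driven gap is in hand this is routine.
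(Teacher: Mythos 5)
The paper does not actually prove this proposition: it is imported verbatim from Guichard--Wienhard \cite[Prop.\ 4.10]{Guichard-Wienhard}, so there is no internal proof to compare against. Judged on its own, your sketch has the right overall shape --- build the section from $(\xi,\theta)$, reduce property (ii) to a singular-value gap along the flow, and use irreducibility to control limits of normalized matrices --- and the one place irreducibility genuinely enters is exactly where you put it: $\mathrm{span}\,\xi(\partial_\infty\Gamma)$ is a nonzero $\rho(\Gamma)$-invariant subspace, hence all of $\R^d$, so any subsequential limit $\pi$ of $\rho(\gamma_n)/\|\rho(\gamma_n)\|$ with $\gamma_n\to a$ and $\gamma_n^{-1}\to b$ in $\partial_\infty\Gamma$ satisfies $\pi(\xi(x))\subseteq\xi(a)$ for all $x\ne b$ and therefore has rank one, with the dual argument pinning down $\ker\pi=\theta(b)$.

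However, the contradiction argument as you state it does not close. Failure of \emph{uniform exponential} contraction only produces a sequence $\gamma_n\to\infty$ along which $\log\bigl(\sigma_1/\sigma_2\bigr)(\rho(\gamma_n))$ grows sublinearly in word length --- not one along which it stays bounded. For such a sequence the normalized limit operator is still rank one, so no conflict with irreducibility arises and your ``limit operator of rank at least two'' never materializes. The standard repair is to decouple two steps: first use the rank-one limit argument to prove the purely qualitative statement that every orbit of the flow on $\sigma^*E^-$ is eventually contracted (equivalently, $\sigma_2/\sigma_1(\rho(\gamma_n))\to 0$ along every diverging sequence, with attracting line and repelling hyperplane converging to $\xi(a)$ and $\theta(b)$, the transversality of $\xi$ and $\theta$ supplying the uniform angle bounds needed to transfer this to the fiber norms); then invoke the general lemma that a continuous flow on a normed line bundle over a \emph{compact} base which contracts every orbit is automatically uniformly exponentially contracting (by compactness there is $T$ so that every point admits $t(u)\le T$ with $\|\phi_{t(u)}\|_u\le 1/2$; iterate). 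You gesture at both cocompactness and an ``upgrade,'' but without this lemma made explicit the deduction of the linear gap is a genuine hole; with it, your sketch becomes the standard argument.
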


Moreover, Guichard and Wienhard \cite[Prop.\ 4.3 and Rmk. 4.12]{Guichard-Wienhard} showed that one can often use a \emph{Pl\"ucker representation} to reduce problems about general Anosov representations to problems about projective Anosov representations (see also Bridgeman--Canary--Labourie--Sambarino \cite[Cor. 2.13]{BCLS}).

\begin{Proposition}[Guichard--Wienhard \cite{Guichard-Wienhard}]\label{Plucker}
If $G$ is a semisimple Lie group with finite center and $P^\pm$ is a pair of proper opposite parabolic subgroups, then there
exists an integer $d=d(G, P^\pm)$ and an irreducible representation
\[
\tau : G \to \SL_{d}(\R)
\]
such that if $\Gamma$ is word hyperbolic, then $\rho:\Gamma\to G$ is $(P^+,P^-)$-Anosov if and only if $\tau\circ\rho$ is projective Anosov.
\end{Proposition}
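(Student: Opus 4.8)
The plan is to construct $\tau$ from the highest-weight theory of $G$ and then translate ``$(P^+,P^-)$-Anosov'' for $\rho$ into a statement about the top two singular values of $\tau\circ\rho$. Fix a maximal $\R$-split torus $A\subset G$ with Lie algebra $\mathfrak a$, a closed positive Weyl chamber $\mathfrak a^+\subset\mathfrak a$, the simple restricted roots $\Delta$, and the Cartan projection $\mu:G\to\mathfrak a^+$, chosen so that, up to conjugation, $P^\pm$ is the standard pair of opposite parabolics with common Levi $L=P^+\cap P^-$ generated by $\Delta\setminus S$ for a subset $S\subseteq\Delta$ (the simple roots detecting $P^+$). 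Choose a dominant weight $\chi$ whose stabilizer parabolic is exactly $P^+$ (for instance the sum of the fundamental weights dual to the roots in $S$); replacing $\chi$ by a positive multiple is harmless since $P_{n\chi}=P_\chi$, and following Guichard--Wienhard \cite{Guichard-Wienhard} one realizes the irreducible representation $V_\chi$ of highest weight $\chi$ over $\R$, obtaining an irreducible $\tau:G\to\SL_d(\R)$ with $d$ depending only on $G$ and $\det\tau=1$. The highest weight line $\ell^+\subset\R^d$ has stabilizer exactly $P^+$, and there is a unique $P^-$-invariant hyperplane $W^-$ with $\R^d=\ell^+\oplus W^-$; this gives closed $G$-equivariant embeddings $\iota^+:G/P^+\hookrightarrow\RPd$ and $\iota^-:G/P^-\hookrightarrow\RPddual$ that carry transverse flag pairs to non-incident line--hyperplane pairs and the attracting/repelling dynamics of a $P^+$-proximal element of $G$ to the proximal dynamics of its image in $\SL_d(\R)$. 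The one piece of representation theory we use is combinatorial: since $\langle\chi,\alpha^\vee\rangle\ge 1$ for every $\alpha\in S$, the weights of $V_\chi$ covered by $\chi$ are exactly $\{\chi-\alpha:\alpha\in S\}$ and every other weight lies below one of these, so for all $g\in G$
\[
\log\!\left(\sigma_1(\tau(g))/\sigma_2(\tau(g))\right)=\min_{\alpha\in S}\alpha(\mu(g)),
\]
where $\sigma_1\ge\sigma_2\ge\cdots$ are the singular values.

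Granting the Cartan-projection characterization of Anosov representations (Gu\'eritaud--Guichard--Kassel--Wienhard \cite{GGKW}, Kapovich--Leeb--Porti \cite{KLP}), the equivalence is now immediate. Indeed $\rho:\Gamma\to G$ is $(P^+,P^-)$-Anosov if and only if there are $c,C>0$ with $\alpha(\mu(\rho(\gamma)))\ge c|\gamma|-C$ for all $\gamma\in\Gamma$ and all $\alpha\in S$, while $\tau\circ\rho$ is projective Anosov if and only if $\log(\sigma_1/\sigma_2)(\tau\circ\rho(\gamma))\ge c'|\gamma|-C'$ for suitable $c',C'>0$; by the displayed identity these two conditions coincide, and $\tau\circ\rho$ is irreducible by construction.

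If one prefers to work directly from the flow-theoretic definition of Section \ref{sec:anosov}, the same statement follows by transporting Anosov sections through $\tau$. In the forward direction, given an Anosov section for $\rho$ with limit maps $\xi^\pm:\partial_\infty\Gamma\to G/P^\pm$, one composes with $\iota^\pm$ and pushes the section forward through $\tau$; along a long geodesic segment the holonomy is the $P^+$-proximal element $\rho(\gamma)$, so $\tau(\rho(\gamma))$ is proximal with attracting line $\iota^+(\xi^+(\gamma^+))$ and singular-value gap $\min_{\alpha\in S}\alpha(\mu(\rho(\gamma)))$ growing linearly in the segment length; a proximal linear map with large singular-value gap contracts a whole neighborhood of its attracting fixed point in $\RPd$ at a uniform rate, and compactness of $U_0\Gamma$ upgrades this to the required dilation and contraction of the projective distributions. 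In the reverse direction, from the limit maps $\zeta^\pm$ of the projective Anosov representation $\tau\circ\rho$ one argues that for every infinite-order $\gamma$, proximality of $\tau(\rho(\gamma))$ forces $\rho(\gamma)$ to be $P^+$-proximal with attracting flag mapping under $\iota^+$ to $\zeta^+(\gamma^+)$; since such $\gamma^+$ are dense in $\partial_\infty\Gamma$, $\zeta^+$ is continuous, and $\iota^+(G/P^+)$ is closed in $\RPd$, the map $\zeta^+$ factors through $\iota^+$ and yields a continuous $\rho$-equivariant transverse map $\xi^+:\partial_\infty\Gamma\to G/P^+$, and similarly $\xi^-$; the gap identity then gives linear growth of $\alpha(\mu(\rho(\gamma)))$ for all $\alpha\in S$, and from $\xi^\pm$ together with this estimate one reconstructs an Anosov section for $\rho$.

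The main obstacle, in either approach, is the reverse direction: ruling out that the projective limit maps $\zeta^\pm$ escape the proper subvarieties $\iota^\pm(G/P^\pm)$, and then promoting a single projective contraction estimate to a genuine Anosov section for $\rho$ --- the latter because the flow-theoretic definition demands dilation of the entire distribution over $G/P^+$, not merely of the directions along which $G/P^+$ sits inside $\RPd$. Both points rest on the defining feature of the construction, that $\chi$ is chosen so that $P_\chi$ is exactly $P^+$ and hence $S$ is the full detecting set, which is precisely what makes ``$\tau(g)$ proximal with large singular-value gap'' equivalent to ``$g$ is $P^+$-proximal with $\alpha(\mu(g))$ large for every $\alpha\in S$''. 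The secondary bookkeeping --- realizing $\tau$ over $\R$ with trivial determinant, and accommodating the finite center of $G$ --- is handled as in Guichard--Wienhard \cite{Guichard-Wienhard} by passing to a multiple of $\chi$ (and a suitable real form when $V_\chi$ is not of real type), without affecting the dependence of $d$ on $G$.
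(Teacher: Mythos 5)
The paper offers no proof of this proposition: it is quoted verbatim from Guichard--Wienhard \cite[Prop.\ 4.3 and Rmk.\ 4.12]{Guichard-Wienhard}, so there is no internal argument to compare against. Your construction of $\tau$ is the standard one and agrees in substance with the cited source: Guichard--Wienhard take the concrete highest-weight module $\bigwedge^{\dim\mathfrak u^+}\mathfrak g$ under the adjoint representation, whereas you take an abstract irreducible module $V_\chi$ with $P_\chi=P^+$; both choices have the two properties that everything hinges on, namely that the stabilizer of the highest weight line is exactly $P^+$ and that the first singular-value gap of $\tau(g)$ computes $\min_{\alpha\in S}\alpha(\mu(g))$. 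The one point to be careful about is your primary route through the Cartan-projection characterization of Anosov representations: in Gu\'eritaud--Guichard--Kassel--Wienhard that characterization for general $(P^+,P^-)$ is itself \emph{proved} by first establishing a Pl\"ucker-type reduction to the projective case, so invoking \cite[Thm.\ 1.3]{GGKW} in full generality here is circular unless you instead cite the intrinsic proof of Kapovich--Leeb--Porti (or restrict to the projective case of GGKW, which is how the present paper uses it in the appendix). Your second, flow-theoretic argument avoids this and is essentially Guichard--Wienhard's original proof: push the section forward via $\iota^\pm$ for one direction, and for the converse use proximality plus density of attracting fixed points and closedness of $\iota^+(G/P^+)$ in $\RPd$ to show the projective limit maps factor through the flag varieties, then reconstruct the section. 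The remaining representation-theoretic bookkeeping (restricted weights of multiplicity one, every other weight dominated by some $\chi-\alpha$ with $\alpha\in S$ on $\overline{\mathfrak a}^+$, realizability over $\R$ after passing to a multiple of $\chi$, and well-definedness on $G$ rather than its universal cover) is exactly the content of \cite[Sec.\ 4]{Guichard-Wienhard}, and you have flagged each item correctly.
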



\section{JSJ decompositions}\label{sec:jsj}

In this section, we recall the JSJ decomposition of a one-ended, torsion-free word hyperbolic group $\Gamma$ 
and the associated structure theory for
$\Out(\Gamma)$. We refer the reader to Gromov \cite{Gromov} and Bridson--Haefliger \cite{Bridson-Haefliger} for basics on hyperbolic groups and
to Serre \cite{Serre} for the theory of graphs of groups. 
Recall that a torsion-free word hyperbolic group is one-ended if and only if it is freely indecomposable but not cyclic.

If $\Gamma$ is the fundamental group of a finite graph of groups $\mathcal{G}(V, E)$ and every edge group is a cyclic subgroup of $\Gamma$, then $\mathcal{G}(V, E)$ is a $\Z$-\emph{splitting} of $\Gamma$. A vertex group $\Gamma_v$ of $\mathcal{G}(V, E)$ is \emph{Fuchsian} if $\Gamma_v$ is not cyclic and there exists a compact surface $F_v$ with boundary $\partial F_v$ such that $\Gamma_v \cong \pi_1(F_v)$ and the edge groups adjacent to $v$ are exactly the (conjugacy classes of) cyclic subgroups of $\Gamma_v$ associated with components of $\partial F_v$. A vertex group $\Gamma_v$ is \emph{rigid} if it is not infinite cyclic or Fuchsian and does not admit a $\Z$-splitting for which every edge group adjacent to $v$ is conjugate into a vertex group in the $\Z$-splitting of $\Gamma_v$.

Sela \cite{Sela} developed a canonical JSJ decomposition for one-ended word hyperbolic groups. 
We use a version due to Bowditch \cite{Bowditch}, which essentially agrees with Sela's in the torsion-free case and provides slightly stronger uniqueness results. 
The following theorem records the key properties of the JSJ splitting.

\begin{Theorem}{\rm (Bowditch \cite{Bowditch}, Sela \cite{Sela})}
\label{thm:jsj}
If $\Gamma$ is a torsion-free, one-ended, indecomposable hyperbolic group, there exists a canonical $\Z$-splitting $\Gamma \cong \pi_1(\mathcal{G}(V, E))$ of $\Gamma$,
called the JSJ-splitting, where each vertex group is either a maximal infinite cyclic subgroup of $\Gamma$, a maximal Fuchsian subgroup, or a rigid vertex group, such that:
\begin{enumerate}

\item
No two vertices of the same type are adjacent and no Fuchsian vertex is adjacent to a rigid vertex.

\item
Every vertex group is a quasiconvex subgroup of $\Gamma$.

\item
If $H$ is an infinite cyclic subgroup of $\Gamma$ and $\Gamma$ splits over $H$, then $H$ can be conjugated into an infinite cyclic or Fuchsian vertex group.

\item
If $\phi\in {\rm Aut}(\Gamma)$ and $v\in V$, then there exists $w\in V$ so that $\phi(\Gamma_v)$ is conjugate to $\Gamma_w$.
\end{enumerate}
\end{Theorem}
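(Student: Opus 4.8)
The statement is a repackaging of results of Bowditch \cite{Bowditch}, refining earlier work of Sela \cite{Sela}, so the plan is to recall how Bowditch builds the splitting out of the topological dynamics on $\partial_\infty\Gamma$ and then to read off properties (1)--(4). First I would recall that, since $\Gamma$ is one-ended and hyperbolic, $\partial_\infty\Gamma$ is a connected, locally connected, metrizable continuum with no global cut point (Bestvina--Mess, Swarup, Bowditch). The heart of the matter is Bowditch's analysis of the \emph{local cut points} and \emph{cut pairs} of $\partial_\infty\Gamma$: these organize into a $\Gamma$-invariant pretree, and passing to the associated simplicial tree $T$ one obtains a cocompact $\Gamma$-action whose quotient graph of groups is the desired $\Z$-splitting. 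Edge stabilizers are stabilizers of cut pairs, hence infinite cyclic, and one replaces each by the maximal cyclic subgroup containing it; vertex stabilizers fall into three classes according to the local boundary structure at the corresponding vertex of $T$, namely the two-point stabilizers (giving the maximal cyclic vertices), the ``necklace'' pieces on whose circle-like boundary set $\Gamma_v$ acts as a convergence group and is therefore Fuchsian by Tukia--Gabai--Casson--Jungreis (giving the Fuchsian vertices), and the remaining, rigid, vertices. Property (1) is then built into the construction, since two adjacent pieces of the same type, or an adjacent Fuchsian/rigid pair, would have been amalgamated in forming $T$.

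For (2) I would combine the fact that the edge groups are cyclic, hence quasiconvex, with Bowditch's bound on the stabilizers of long arcs of $T$ (acylindricity of the action): a cocompact acylindrical action of a hyperbolic group on a tree with quasiconvex edge stabilizers has quasiconvex vertex stabilizers, by the Bestvina--Feighn combination theorem or by Bowditch's direct estimate. For (3): a splitting of $\Gamma$ over an infinite cyclic $H$ produces a $\Gamma$-invariant partition of $\partial_\infty\Gamma$ whose frontier is the limit set of $H$, that is, a cut pair; by completeness of Bowditch's canonical collection of cut pairs --- every cut pair of $\partial_\infty\Gamma$ either belongs to the collection or is internal to a single necklace piece --- this cut pair is either one of the collection, in which case $H$ is conjugate into a cyclic vertex group, or it lies inside a necklace piece, in which case $H$ is conjugate into the corresponding Fuchsian vertex group (it is the class of a simple closed curve on the surface). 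In particular it can never be buried in a rigid vertex group, which is exactly the content of (3).

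For (4): every $\phi\in{\rm Aut}(\Gamma)$ extends to a homeomorphism $\partial\phi$ of $\partial_\infty\Gamma$ which conjugates the $\Gamma$-action to its $\phi$-twist. Since $T$ was constructed purely from the topology of $\partial_\infty\Gamma$ together with this action, $\partial\phi$ must send the canonical collection of cut pairs, and hence the necklace pieces, to themselves; it therefore induces an automorphism of $T$ that is equivariant with respect to $\phi$. In particular $\phi$ permutes the $\Gamma$-conjugacy classes of vertex groups, so for each $v\in V$ there is $w\in V$ with $\phi(\Gamma_v)$ conjugate to $\Gamma_w$.

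The main obstacle is property (2). The existence of the canonical JSJ splitting and properties (1), (3), (4) are essentially the defining features of the construction and can be quoted as a black box, but the quasiconvexity of each vertex group is the one point where one genuinely needs the acylindricity of the $\Gamma$-action on $T$ and the resulting malnormality-type control on edge groups; without it the combination-theorem argument for quasiconvex vertex stabilizers does not apply. All of this is contained in Bowditch \cite{Bowditch}, but it is the step I would take the most care to state precisely.
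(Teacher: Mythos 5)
The paper does not prove this theorem at all: it is quoted as background, with the proof deferred entirely to Bowditch \cite{Bowditch} (and Sela \cite{Sela}), so there is no in-paper argument to compare yours against. Your sketch is a faithful outline of Bowditch's boundary-based construction --- cut pairs and local cut points of $\partial_\infty\Gamma$ organizing into a pretree, necklace pieces handled by the convergence-group theorem, canonicity under boundary homeomorphisms giving (4) --- and you correctly flag quasiconvexity of the vertex groups (2) as the one claim that genuinely requires the acylindricity of the action rather than following formally from the construction.
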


\noindent
{\bf Remark:} Any graph of groups decomposition of $\Gamma$ gives rise to a Bass--Serre tree.
Bowditch's JSJ splitting is canonical in the sense that the Bass--Serre trees 
associated to any two JSJ splittings are $\Gamma$-equivariantly isomorphic.
In particular, the collection of vertex groups is well-defined up to conjugacy. 
In the language of Bass--Serre trees, one can strengthen (4) to say that if $\phi\in {\rm Aut}(\Gamma)$, then there is an automorphism 
$H_\phi$ of the Bass--Serre tree that conjugates the action of $\Gamma$ to the action twisted by $\phi$, i.e., the action of $\phi(\gamma)H_\phi$ agrees with the action of $H_\phi\gamma$ for all $\gamma\in\Gamma$.

\medskip

Sela \cite{Sela}, see also Levitt \cite{levitt}, used the JSJ splitting of $\Gamma$ to obtain a decomposition of a finite index subgroup of $\Out(\Gamma)$.
Let $\Out_1(\Gamma)$ denote the finite index subgroup of $\Out(\Gamma)$ consisting of all elements $\phi$ such that if $v\in V$, then $\phi(\Gamma_v)$ is
conjugate to $\Gamma_v$. Notice that this property is invariant under inner automorphisms, so it makes sense in $\Out(\Gamma)$.

If $\Delta$ is a subgroup of $\Gamma$, then there is a restriction map
\[
r_\Delta:\X\to \mathbb{X}(\Delta,G)
\]
where $r_\Delta(\rho)=\rho|_\Delta$.
If $v\in V$, the normalizer of $\Gamma_v$ in $\Gamma$ is exactly $\Gamma_v$, which follows immediately from 
\cite[Prop.\ $\mathrm{III}.\Gamma.16$]{Bridson-Haefliger} if
$\Gamma_v$ is  cyclic  and from \cite[Lem.\ 2.1]{carette} otherwise.
Therefore, we obtain a well-defined map
\[
\psi_v:\Out_1(\Gamma)\to \Out(\Gamma_v),
\]
such that 
\[
r_{\Gamma_v}(\rho\circ\phi)=r_{\Gamma_v}(\rho)\circ \psi_v(\phi)
\]
for all $\rho\in\X$ and $\phi\in\Out_1(\Gamma)$. Note that $\rho\circ\phi=\phi^{-1}(\rho)$ is well-defined since inner automorphisms
of $\Gamma$ act trivially on $\X$.

If $\Gamma_v$ is a Fuchsian vertex group, let $\Mod_0(F_v)$ be the group of isotopy classes of homeomorphisms of $F_v$ whose restriction to $\partial F_v$ is homotopic to the identity. Equivalently, this is the subgroup of $\Out(\Gamma_v)$ consisting of outer automorphisms preserving the conjugacy class of each peripheral element. Every element of $\Mod_0(F_v)$ extends, though not uniquely, to an outer automorphism of $\Gamma$.

Let $\Out_0(\Gamma)$ denote the subgroup of $\Out_1(\Gamma)$ consisting of all $\phi\in\Out_1(\Gamma)$ such that $\psi_v(\phi)$ is trivial if $\Gamma_v$ is rigid or cyclic and $\psi_v(\phi)\in {\rm Mod}_0(F_v)$
if $\Gamma_v$ is Fuchsian. Define 
\[
{\rm Mod}_F(\Gamma)=\oplus_{v\in V_f} {\rm Mod}_0(F_v),
\]
where $V_f$ is the collection of Fuchsian vertices,
and consider the homomorphism
\[
\psi=\oplus_{v\in V_f} \psi_v:\Out_0(\Gamma)\to {\rm Mod}_F(\Gamma).
\]

The kernel of $\psi$ is generated by twists about cyclic vertex groups (see Levitt \cite[Prop.\ 2.2]{levitt}).
If $\Gamma_v$ is a cyclic vertex group and $e$ is an edge with $v$ as an endpoint, then
there is an associated amalgamated free product $\Gamma=A*_{\Gamma_e}B$ (if $e$ is separating) or HNN extension $\Gamma=A*_{\Gamma_e}$ (if $e$
does not separate) and $\Gamma_v\subset A$. 
If $z\in\Gamma_v$, we define the \emph{twist} $D_{z,e}$ as the image in $\Out(\Gamma)$ of the automorphism that acts trivially on $A$ and by conjugation by $z$ on $B$ in the amalgamated
free product case, and acts by taking the stable letter $t$ to $zt$ in the HNN case. Notice that this definition is motivated by the group-theoretical reformulation of a Dehn twist about a simple closed curve on a surface. If $v$ is a cyclic vertex, then the {\em characteristic twist subgroup} $\mathcal{T}_v$ is
the group generated by all twists about the cyclic vertex group $\Gamma_v$. 

Let $\mathcal{T}$ be the subgroup of $\Out_0(\Gamma)$ generated by all characteristic twist groups.
The group $\mathcal{T}$ is free abelian. Moreover,
\[
\mathcal{T}=\bigoplus_{v\in V_c} \mathcal T_{v},
\]
where $V_c$ is the collection of cyclic vertices and each $\mathcal{T}_v$ is a free abelian group of rank $n_v-1$, where $n_v$ is the number of edges adjacent to $v$
(see Levitt \cite[Prop.\ 3.1, Thm.\ 5.1]{levitt}).
For each $v\in V_c$, let 
\[
p_{v}:\mathcal{T}\to \mathcal T_{v}
\]
be the obvious projection map.

Sela proved that $\Out_0(\Gamma)$ has finite index in $\Out(\Gamma)$ and is a central extension of ${\rm Mod}_F(\Gamma)$
by $\mathcal{T}$.

\begin{Theorem}\label{thm:selaout}
{\rm (Levitt \cite[Thm.\ 5.1]{levitt}, Sela \cite[Thm.\ 1.9(ii)]{Sela})}
Let $\Gamma$ be a one-ended, torsion-free hyperbolic group with JSJ splitting $\mathcal{G}(V, E)$. Then, $\Out_0(\Gamma)$ is a finite
index subgroup of $\Out(\Gamma)$ and $\psi:\Out_0(\Gamma)\to {\rm Mod}_F(\Gamma)$ gives rise to a central extension
\[
\{1\} \rightarrow \mathcal{T} \rightarrow \Out_0(\Gamma)\xrightarrow{\;\; \psi \; \; } \Mod_F(\Gamma) \rightarrow \{1\}.
\]
\end{Theorem}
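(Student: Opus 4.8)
The plan is to split the statement into four assertions and prove each in turn: (a) $\Out_0(\Gamma)$ has finite index in $\Out(\Gamma)$; (b) the kernel of $\psi$ is exactly $\mathcal{T}$; (c) $\mathcal{T}$ is central in $\Out_0(\Gamma)$; and (d) $\psi$ is surjective onto $\Mod_F(\Gamma)$. Throughout I would use Bass--Serre theory and the structure theory of automorphisms of graphs of groups with cyclic edge groups (Bass--Jiang, Levitt \cite{levitt}), together with the canonicity of the JSJ splitting recorded in Theorem \ref{thm:jsj} and the Remark following it.

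For (a), I would first observe that $\Out_1(\Gamma)$ has finite index: by Theorem \ref{thm:jsj}(4) every $\phi\in\Out(\Gamma)$ permutes the finite set of conjugacy classes of vertex groups and preserves their types, giving a homomorphism from $\Out(\Gamma)$ into a finite symmetric group whose kernel is $\Out_1(\Gamma)$. To pass from $\Out_1(\Gamma)$ to $\Out_0(\Gamma)$ I would argue one vertex at a time that the image of $\psi_v$ on $\Out_1(\Gamma)$ is finite-by-(prescribed target): for a rigid vertex, the subgroup of $\Out(\Gamma_v)$ preserving the finitely many conjugacy classes of adjacent edge groups is finite by rigidity of rigid vertex groups (Sela \cite{Sela}); for a Fuchsian vertex, Dehn--Nielsen--Baer identifies the peripheral-structure-preserving subgroup of $\Out(\Gamma_v)$ with the extended mapping class group of $F_v$, inside which $\Mod_0(F_v)$ has finite index (the quotient being built from permutations of boundary components and a reflection). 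Intersecting these finitely many finite-index conditions yields (a).

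For (b), an element of $\ker\psi$ fixes the conjugacy class of every vertex group and induces an inner automorphism on it. Using the action on the Bass--Serre tree $T$ of the JSJ splitting and the twisting isomorphism $H_\phi$ from the Remark after Theorem \ref{thm:jsj}, one can adjust such a $\phi$ by an inner automorphism of $\Gamma$ so that $\phi$ preserves $T$ and acts trivially on a chosen lift of each vertex group; the normal form for automorphisms of a graph of groups then writes $\phi$ as a product of edge twists. Since Theorem \ref{thm:jsj}(1) forces every edge to join a cyclic vertex to a Fuchsian or rigid vertex, and the edge group is its own centralizer inside the non-cyclic endpoint (peripheral subgroups are malnormal in surface groups, edge groups are malnormal in rigid vertices), each such twist can be taken supported at the cyclic endpoint, so $\phi\in\mathcal{T}$; the reverse inclusion is immediate. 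Examining the relations among the generators $D_{z,e}$ then gives $\mathcal{T}=\bigoplus_{v\in V_c}\mathcal{T}_v$ with each $\mathcal{T}_v$ free abelian: for a cyclic vertex $v$ with generator $z$, the sole relation is that the product of $D_{z,e}$ over all edges $e$ at $v$ is inner, arising from the absence of a global twist for the $\Gamma_v$-action on $T$, which drops the rank from $n_v$ to $n_v-1$; this is Levitt \cite[Prop.\ 3.1, Thm.\ 5.1]{levitt}.

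For (c), $\mathcal{T}$ is already normal, being a kernel, so it remains to see that conjugation by $\Out_0(\Gamma)$ acts trivially on it: given $\phi\in\Out_0(\Gamma)$ and a twist $D_{z,e}$ about a cyclic vertex $v$, conjugation carries it to the twist about $\phi(v)$ determined by the images of $z$ and $e$, but $\phi$ fixes $v$ and, after composing with $H_\phi$, fixes $e$ and acts trivially on $\Gamma_v$ since $\psi_v(\phi)=1$, so the conjugate twist equals $D_{z,e}$ in $\Out(\Gamma)$. For (d), given for each Fuchsian vertex a homeomorphism $f_v$ of $F_v$ whose restriction to $\partial F_v$ is homotopic to the identity, the induced automorphism of $\Gamma_v\cong\pi_1(F_v)$ fixes the conjugacy class of each peripheral subgroup, hence of each adjacent edge group; extending it by the identity on all remaining vertex and edge groups defines an automorphism of the graph of groups, hence an element of $\Out_0(\Gamma)$ that $\psi$ sends to $\bigoplus_v (f_v)_*$, and such elements generate $\Mod_F(\Gamma)$. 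The main obstacle is step (b): running the normal-form argument for automorphisms of graphs of groups with cyclic edge groups and, above all, showing that the only relations among the twists $D_{z,e}$ are the ``total twist is inner'' relations, so that $\mathcal{T}_v$ is free abelian of rank exactly $n_v-1$ — this is the technical heart of Levitt's analysis \cite{levitt} and of Sela's original argument \cite{Sela}.
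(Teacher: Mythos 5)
The paper does not prove this theorem; it is quoted directly from Levitt \cite[Thm.\ 5.1]{levitt} and Sela \cite[Thm.\ 1.9(ii)]{Sela}, and the surrounding text of Section \ref{sec:jsj} only records the ingredients (the kernel of $\psi$ is generated by twists, per Levitt's Prop.\ 2.2, and $\mathcal{T}_v$ is free abelian of rank $n_v-1$, per his Prop.\ 3.1 and Thm.\ 5.1). Your four-step outline follows exactly the route of those sources, and steps (a), (c), (d) are correct as sketched; in particular (d) matches the paper's own remark that every element of $\Mod_0(F_v)$ extends to $\Gamma$. One phrase in (b) is wrong as literally stated: you cannot adjust $\phi\in\ker\psi$ by a single inner automorphism so that it acts trivially on a lift of \emph{every} vertex group simultaneously -- if you could, the kernel would be trivial. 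You can trivialize the action on the vertex groups along a spanning tree (or on one vertex group at a time), and the residual conjugations on the remaining vertex groups, together with the images of the Bass--Serre generators, are precisely the data recorded by the edge twists; this is the content of the Bass--Jiang/Levitt normal form you then invoke, so the conclusion you draw is the right one, but the normalization claim should be weakened. With that correction, and granting Levitt's computation that the only relations among the $D_{z,e}$ are the ``total twist is inner'' relations (which you correctly identify as the technical heart and appropriately outsource), the proposal is a faithful reconstruction of the cited proof.
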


\section{Basic properties of amalgam Anosov representations}
\label{aareps}

We are almost ready for the precise definition of an amalgam Anosov representation. It only remains
to carefully define what it means to register a cyclic vertex group. Suppose that $\Gamma_v$ is a cyclic vertex group and $\{w_1,\ldots,w_{n_v}\}$ are the vertices of 
$\mathcal{G}(V,E)$ adjacent to $v$. We say that a free subgroup $H$ of $\Gamma$ \emph{registers} $\Gamma_v$ 
if $H$ is freely generated by nontrivial elements $a_0\in \Gamma_v$ and $\{a_1,\ldots,a_{n_v}\}$ where $a_i\in \Gamma_{w_i}$ for each $i$.
We will see that this implies that a finite index subgroup of the twist group $\mathcal{T}_v$ preserves $H$ (up to conjugacy) and embeds in $\Out(H)$.

\begin{Definition}
\label{aadef}
If $\Gamma$ is a one-ended word hyperbolic group and $G$ is a semisimple Lie group, we say that a representation
$\rho\in\X$ is {\em amalgam Anosov} if it satisfies the following two conditions:
\begin{enumerate} 
\item
if $\Gamma_v$ is a Fuchsian vertex group, then $\rho|_{\Gamma_v}$ is Anosov, and
\item
if $\Gamma_v$ is a cyclic vertex group, then there exists a free subgroup $H$ of $\Gamma$ that registers $\Gamma_v$ such that $\rho|_H$ is Anosov.
\end{enumerate}
\end{Definition}

We emphasize that  the restricted representations of the form $\rho|_{\Gamma_v}$ or $\rho|_H$ in the above definition
need not all be Anosov with respect to the same pair of opposite parabolic subgroups of $G$. We will see that  the space $\Xaa$ of amalgam Anosov representations of $\Gamma$ into $G$
is an open, $\Out(\Gamma)$-invariant subset of $\X$ and that Anosov representations are amalgam Anosov, 
i.e., $\Xa\subset \Xaa$.

It will be convenient to introduce some notation. Let 
\[
\mathbb X_F(\Gamma,G)=\bigcap_{v\in V_f} r_{\Gamma_v}^{-1}({\mathbb X}_{\mathrm{A}}(\Gamma_v,G)),
\]
where $V_f$ is the collection of Fuchsian vertices and
\hbox{$r_{\Gamma_v}:\X\to\mathbb{X}(\Gamma_v, G)$} is the restriction map for each $v\in V_f$.
Similarly, if $v\in V_c$ is a cyclic vertex, let  
\[
\mathbb{X}_v(\Gamma,G)=\bigcup_{H\ {\rm registers}\ \Gamma_v} r_H^{-1}(\mathbb{X}_{\mathrm{A}}(H,G))
\]
and
\[
\mathbb{X}_C(\Gamma,G)=\bigcap_{v\in V_c} \mathbb X_v(\Gamma_v,G).
\]
Notice that $\Xaa=\mathbb{X}_C(\Gamma,G)\cap \mathbb{X}_F(\Gamma,G)$.

\begin{Proposition}
\label{AareAA}
If $\Gamma$ is a one-ended, torsion-free word hyperbolic group and $G$ is a semisimple Lie group with finite center,
$\Xaa$ is an open, $\Out(\Gamma)$-invariant subset of $\X$ and 
\[
\Xa \subset \Xaa.
\]
Moreover, $\mathbb{X}_C(\Gamma, G)$ and $\mathbb{X}_F(\Gamma, G)$ are also open $\Out(\Gamma)$ invariant subsets of $\X$.
\end{Proposition}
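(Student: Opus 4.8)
The plan is to establish the three assertions—openness, $\Out(\Gamma)$-invariance, and the inclusion $\Xa \subseteq \Xaa$—essentially independently, since each rests on a separate structural fact already available. I would organize the argument around the sets $\mathbb{X}_F(\Gamma,G)$, $\mathbb{X}_v(\Gamma,G)$ for $v \in V_c$, and $\mathbb{X}_C(\Gamma,G)$, proving the claims for these building blocks and then intersecting, using the observation already recorded that $\Xaa = \mathbb{X}_C(\Gamma,G) \cap \mathbb{X}_F(\Gamma,G)$.

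For \emph{openness}: each restriction map $r_\Delta : \X \to \mathbb{X}(\Delta,G)$ is continuous, and by Theorem~\ref{thm:Guichard-Wienhard}(1) the set $\mathbb{X}_{\mathrm{A}}(\Delta,G)$ is open in $\mathbb{X}(\Delta,G)$; hence each $r_{\Gamma_v}^{-1}(\mathbb{X}_{\mathrm{A}}(\Gamma_v,G))$ is open, so $\mathbb{X}_F(\Gamma,G)$ is a finite intersection of open sets and is open. Likewise each $r_H^{-1}(\mathbb{X}_{\mathrm{A}}(H,G))$ is open, so $\mathbb{X}_v(\Gamma,G)$ is a union of open sets, hence open, and $\mathbb{X}_C(\Gamma,G)$ is a finite intersection of these, so open. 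Then $\Xaa$, being the intersection of two open sets, is open.

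For \emph{$\Out(\Gamma)$-invariance}: the key input is Theorem~\ref{thm:jsj}(4)—every automorphism $\phi$ of $\Gamma$ permutes the conjugacy classes of vertex groups, sending $\Gamma_v$ to a conjugate of some $\Gamma_{w}$ of the same type (the type is preserved because, e.g., the canonical Bass--Serre tree is $\phi$-equivariantly respected, so cyclic/Fuchsian/rigid is an invariant of the vertex orbit). Given $\rho \in \Xaa$ and $\phi \in \Aut(\Gamma)$, write $\rho' = \rho \circ \phi^{-1}$. For a Fuchsian vertex $w$, $\phi^{-1}(\Gamma_w)$ is conjugate to some Fuchsian $\Gamma_v$, so $\rho'|_{\Gamma_w}$ is conjugate to $\rho \circ \phi^{-1}|_{\Gamma_w}$, which in turn is (after conjugation) $\rho|_{\Gamma_v}$ precomposed with an isomorphism $\Gamma_w \to \Gamma_v$; since being Anosov is invariant under precomposition with a group isomorphism and under conjugation in $G$, $\rho'|_{\Gamma_w}$ is Anosov. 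For a cyclic vertex $w$, choose the free subgroup $H$ registering the cyclic vertex $v$ with $\phi^{-1}(\Gamma_w)$ conjugate to $\Gamma_v$; one checks $\phi(H)$ (suitably conjugated) registers $\Gamma_w$—here one uses that $\phi$ carries the vertices adjacent to $v$ to the vertices adjacent to $w$, which follows from $\phi$-equivariance of the Bass--Serre tree—and $\rho'|_{\phi(H)}$ is Anosov by the same precomposition-and-conjugation argument. Hence $\rho' \in \Xaa$, and since inner automorphisms act trivially on $\X$ this descends to an action of $\Out(\Gamma)$. The same bookkeeping shows $\mathbb{X}_F$ and $\mathbb{X}_C$ are separately invariant.

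For the \emph{inclusion} $\Xa \subseteq \Xaa$: let $\rho$ be $(P^+,P^-)$-Anosov. By Theorem~\ref{thm:jsj}(2) every vertex group is quasiconvex in $\Gamma$, so by Lemma~\ref{lem:qconvexAnosov} the restriction $\rho|_{\Gamma_v}$ is $(P^+,P^-)$-Anosov for every Fuchsian $v$, giving condition (1) of Definition~\ref{aadef}. For a cyclic vertex $v$, pick \emph{any} free subgroup $H$ registering $\Gamma_v$—such an $H$ exists since one may choose nontrivial $a_0 \in \Gamma_v$ and $a_i \in \Gamma_{w_i}$, and a ping-pong / Tits-alternative argument (or simply that a suitably generic such tuple generates a free quasiconvex subgroup, as vertex and edge groups are quasiconvex and malnormal enough) shows one can arrange $H$ to be free and quasiconvex in $\Gamma$; then Lemma~\ref{lem:qconvexAnosov} again gives that $\rho|_H$ is $(P^+,P^-)$-Anosov, so condition (2) holds. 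Therefore $\rho \in \Xaa$.

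The main obstacle I anticipate is the invariance argument in the cyclic case: one must verify carefully that if $H$ registers $\Gamma_v$ then $\phi(H)$ (up to conjugacy) registers $\Gamma_{w}$, which requires tracking how $\phi$ permutes not just the vertex groups but the edges incident to them—precisely the strengthened, Bass--Serre-tree form of Theorem~\ref{thm:jsj}(4) noted in the Remark following that theorem. A secondary subtlety is confirming, in the proof of $\Xa \subseteq \Xaa$, that a free registering subgroup can be chosen quasiconvex; this is where I would spend the most care, as the definition of "registers" only demands the generators lie in the prescribed vertex groups and freely generate $H$, and one must upgrade this to quasiconvexity—most cleanly by invoking that, after replacing the $a_i$ by high powers, the subgroup is quasiconvex and free by a standard combination theorem, which is consistent with the appendix's use of high powers to build Schottky subgroups.
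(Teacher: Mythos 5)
Your proposal is correct and follows essentially the same route as the paper: openness from continuity of the restriction maps and openness of the Anosov locus, invariance from the automorphism-invariance of the JSJ splitting (with registering subgroups carried to registering subgroups), and the inclusion $\Xa\subseteq\Xaa$ from quasiconvexity of vertex groups, Lemma~\ref{lem:qconvexAnosov}, and the existence of a \emph{quasiconvex} registering subgroup obtained by ping-pong on high powers (the paper isolates this last point as Lemma~\ref{lem:RegistersExist}, citing Gitik for quasiconvexity). Your phrase ``pick any free registering subgroup'' is a harmless slip, since you immediately note that one must in fact produce a quasiconvex one, which is exactly the care the paper takes.
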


\begin{proof}
Since each $r_{\Gamma_v}$ is continuous, each $\mathbb{X}_{\mathrm{A}}(\Gamma_v,G)$ is open in $\mathbb{X}(\Gamma_v,G)$,
and there are only finitely many Fuchsian vertex groups,  $\mathbb{X}_F(\Gamma,G)$ is open in $\X$. 
If $v$ is a Fuchsian vertex and $\phi\in \Out(\Gamma)$, then there exists a Fuchsian vertex $w$ so that $\phi(\Gamma_v)$
 is conjugate to $\Gamma_w$. 
It follows, since $\phi(\rho)=\rho\circ\phi^{-1}$, that 
\[
\phi(r_{\Gamma_v}^{-1}({\mathbb X}_{\mathrm{A}}(\Gamma_v,G)))=r_{\Gamma_w}^{-1}({\mathbb X}_{\mathrm{A}}(\Gamma_w,G)).
\]
Therefore, $\mathbb{X}_F(\Gamma,G)$ is $\Out(\Gamma)$-invariant.

If $H$ registers a cyclic vertex group $\Gamma_v$, then
since $\mathbb{X}_{\mathrm{A}}(H, G)$ is open in $\mathbb{X}(H, G)$ and $r_H$ is continuous, 
$r_H^{-1}(\mathbb{X}_{\mathrm{A}}(H,G))$ is open in $\X$, so $\mathbb{X}_v(\Gamma,G)$ is an open subset of $\X$.
Since there are only finitely many cyclic vertex groups, $\mathbb{X}_C(\Gamma,G)$ is open in $\X$.
If $\phi\in\Out(\Gamma),$ then $\phi$ permutes the cyclic vertex groups 
and takes registering subgroups to registering subgroups (up to conjugacy), so $\mathbb{X}_C(\Gamma,G)$ is $\Out(\Gamma)$-invariant.

We established that $\mathbb{X}_C(\Gamma,G)$ and $\mathbb{X}_F(\Gamma,G)$ are open and $\Out(\Gamma)$-invariant in $\X$, so their intersection $\Xaa$ is as well.

We will need the following lemma in the proof that $\Xa\subset\Xaa$.

\begin{Lemma}\label{lem:RegistersExist}
If $\Gamma$ is a one-ended, torsion-free word hyperbolic group and 
$\Gamma_v$ is a cyclic vertex group of its JSJ splitting, there exists a quasiconvex subgroup $H \subset \Gamma$ that registers $\Gamma_v$.
\end{Lemma}

\begin{proof}
Choose a generator $a$ for $\Gamma_v$, and for each vertex $w_i$ adjacent to $v$ by the edge $e_i$ let $g_i$ be an element of $\Gamma_{w_i}$, no power of which lies in the edge group $\Gamma_{e_i}$. Such an element $g_i$ exists because $\Gamma_{w_i}$ is not virtually cyclic.

Applying a ping-pong argument, (e.g., Bridson--Haefliger \cite[III.$\Gamma$.3]{Bridson-Haefliger}), one sees that there exists $N$ so that, for all $n\ge N$, the group $H_n$ generated by $\{a^n,g_1^n,\ldots,g_{n_v}^n\}$ is a free subgroup of $\Gamma$ freely generated by $\{a^n,g_1^n,\ldots,g_{n_v}^n\}$. Thus, $H_n$ is a registering subgroup for $T$. Moreover, one may choose $N$ large enough so that $H_n$ is quasiconvex for all $n \ge N$ (see Gitik \cite{Gitik}).
\end{proof}

Let $\rho : \Gamma \to G$ be Anosov with respect to the parabolic subgroups $P^\pm$ of $G$. 
If $\Gamma_v$ is a Fuchsian vertex group of the JSJ decomposition of $\Gamma$, then $\Gamma_v$ is a quasiconvex subgroup of $\Gamma$ (see Theorem \ref{thm:jsj}), so Lemma \ref{lem:qconvexAnosov} implies that $\rho|_{\Gamma_v}$ is Anosov. If $\Gamma_v$ is a cyclic vertex subgroup, then Lemma \ref{lem:RegistersExist} guarantees that there exists a quasiconvex subgroup $H$ of $\Gamma$ that registers $\Gamma_v$. Therefore, again by Lemma 
\ref{lem:qconvexAnosov}, $H$ registers $\Gamma_v$ and $\rho|_H$ is Anosov. It follows that $\rho$ is amalgam Anosov.
\end{proof}

\noindent
{\bf Remark:} The fact that $\Xa\subset\Xaa$ can also be obtained almost immediately by combining Theorem \ref{main-appendix}
and Lemma \ref{lem:qconvexAnosov}, but in the proof of Proposition \ref{AareAA} we give a more self-contained and elementary argument.

\medskip

When we construct examples it will be useful to observe that the Pl\"ucker representation interacts well with the definition 
of amalgam Anosov representations, especially when $G$ has real rank 1.

\begin{Lemma}
\label{Pluckeramalgam}
Let $G$ be a semisimple Lie group with finite center, $P^\pm$ be a pair of proper opposite parabolic subgroups of $G$ and $\tau:G\to \SL_d(\R)$ be the Pl\"ucker representation given by Proposition \ref{Plucker}. If $\Gamma$ is a one-ended, torsion-free hyperbolic group and $\rho:\Gamma\to G$ is a representation such that the restriction of $\rho$ to each Fuchsian vertex subgroup of $\Gamma$ is $(P^+,P^-)$-Anosov and for each cyclic vertex group $\Gamma_v$ of $\Gamma$ there is a registering subgroup $H$, such that $\rho|_H$ is $(P^+,P^-)$-Anosov, then $\tau\circ\rho$ is amalgam Anosov. 

In particular, if $G$ has rank 1 and $\rho:\Gamma\to G$ is amalgam Anosov, then $\tau\circ\rho$ is amalgam Anosov.
\end{Lemma}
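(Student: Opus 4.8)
\textbf{Proof proposal for Lemma \ref{Pluckeramalgam}.}

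The plan is to invoke the defining property of the Pl\"ucker representation $\tau$ from Proposition \ref{Plucker} vertex group by vertex group, together with Lemma \ref{lem:qconvexAnosov} to promote Anosov-ness of restrictions. First, recall the precise statement of Proposition \ref{Plucker}: for the fixed pair $P^\pm$ there is an irreducible $\tau:G\to\SL_d(\R)$ such that, for \emph{any} word hyperbolic group $\Lambda$ and any representation $\sigma:\Lambda\to G$, the composition $\tau\circ\sigma$ is projective Anosov if and only if $\sigma$ is $(P^+,P^-)$-Anosov. The key point is that $\tau$ depends only on $G$ and $P^\pm$, not on $\Lambda$ or $\sigma$; so the same $\tau$ can be applied simultaneously to every vertex subgroup and every registering subgroup.

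Now carry this out. For each Fuchsian vertex group $\Gamma_v$: by hypothesis $\rho|_{\Gamma_v}$ is $(P^+,P^-)$-Anosov, so applying Proposition \ref{Plucker} with $\Lambda=\Gamma_v$ and $\sigma=\rho|_{\Gamma_v}$ gives that $(\tau\circ\rho)|_{\Gamma_v}=\tau\circ(\rho|_{\Gamma_v})$ is projective Anosov, hence Anosov; this verifies condition (1) of Definition \ref{aadef} for $\tau\circ\rho$. For each cyclic vertex group $\Gamma_v$: by hypothesis there is a registering subgroup $H$ with $\rho|_H$ $(P^+,P^-)$-Anosov; $H$ is word hyperbolic (it is free), so applying Proposition \ref{Plucker} with $\Lambda=H$, $\sigma=\rho|_H$ shows $(\tau\circ\rho)|_H$ is projective Anosov, hence Anosov. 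Since $H$ still registers $\Gamma_v$ (the notion of registering is purely group-theoretic and unaffected by post-composition with $\tau$), condition (2) holds for $\tau\circ\rho$. Therefore $\tau\circ\rho$ is amalgam Anosov.

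For the final assertion, suppose $G$ has rank $1$ and $\rho:\Gamma\to G$ is amalgam Anosov. Since $G$ has rank $1$, it has (up to conjugacy) a unique pair $P^\pm$ of proper opposite parabolic subgroups, so ``Anosov'' and ``$(P^+,P^-)$-Anosov'' coincide and there is no choice to make. Thus $\rho|_{\Gamma_v}$ is $(P^+,P^-)$-Anosov for each Fuchsian $\Gamma_v$, and for each cyclic $\Gamma_v$ the registering subgroup $H$ supplied by amalgam Anosov-ness has $\rho|_H$ $(P^+,P^-)$-Anosov. Hence the hypotheses of the first part are met, and $\tau\circ\rho$ is amalgam Anosov.

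I do not anticipate a serious obstacle here: the only thing to be careful about is that Proposition \ref{Plucker} is quantified over all word hyperbolic $\Lambda$ with a single $\tau$, so there is no issue with the various restrictions $\rho|_{\Gamma_v}$ and $\rho|_H$ using possibly different vertex subgroups — they all get sent to projective Anosov representations by the same $\tau$. The mildest subtlety is in the first part, where the hypothesis already supplies restrictions that are $(P^+,P^-)$-Anosov for a \emph{fixed} pair, so no appeal to Lemma \ref{lem:qconvexAnosov} is even needed; in the rank-$1$ specialization, the reduction to a fixed pair is automatic from uniqueness of the parabolic.
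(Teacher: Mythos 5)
Your proof is correct and follows essentially the same route as the paper: apply Proposition \ref{Plucker} restriction-by-restriction to the Fuchsian vertex groups and the registering subgroups (noting that a single $\tau$ works for all of them), and in rank $1$ use the uniqueness up to conjugacy of the pair of proper opposite parabolics to reduce to the first part. Your observation that Lemma \ref{lem:qconvexAnosov} is not actually needed matches the paper, which likewise does not invoke it here.
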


\begin{proof}
Properties (1) and (2) in the definition of an amalgam Anosov representation
and Proposition \ref{Plucker} imply that the restriction of $\tau\circ\rho$ to any Fuchsian vertex group is projective Anosov. Further, it implies that for each cyclic vertex group 
$\Gamma_v$ of $\Gamma$ there is a subgroup $H$ registering $\Gamma_v$ such that $(\tau\circ\rho)|_H$ is projective Anosov. 
Therefore, $\tau\circ \rho$ is amalgam Anosov.

If $G$ has real rank 1, there is a unique conjugacy class of pairs of proper opposite parabolic subgroups. Consequently, every amalgam Anosov representation satisfies the assumptions of the lemma.
\end{proof}


\section{Proof of the Main Theorem}
\label{mainproof}
 
In this section we give the proof of our main result, Theorem \ref{thm:main}. Since $\Out_0(\Gamma)$ has finite index in $\Out(\Gamma)$, Theorem \ref{thm:main} 
follows from Proposition \ref{AareAA} and the following strong proper discontinuity result.

\begin{Proposition}
\label{strongPD}
If $R$ is a compact subset of $\Xaa$ and $\{\phi_n\}$ is a sequence of distinct elements of $\Out_0(\Gamma),$ then $\{\phi_n(R)\}$ exits every compact subset of the entire representation space $\X$.
\end{Proposition}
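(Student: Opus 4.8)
The plan is to reduce to the two proper-discontinuity statements we already have in hand, namely Theorem~\ref{thm:anosovpd} applied to the Fuchsian vertex groups $\Gamma_v$ ($v\in V_f$) and to the registering subgroups $H$ of the cyclic vertex groups ($v\in V_c$). By Theorem~\ref{thm:selaout} we have the central extension $\{1\}\to\mathcal T\to\Out_0(\Gamma)\xrightarrow{\psi}\Mod_F(\Gamma)\to\{1\}$ with $\mathcal T=\bigoplus_{v\in V_c}\mathcal T_v$ and $\Mod_F(\Gamma)=\bigoplus_{v\in V_f}\Mod_0(F_v)$, together with the projections $p_v:\mathcal T\to\mathcal T_v$ and the homomorphisms $\psi_v:\Out_0(\Gamma)\to\Out(\Gamma_v)$. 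Given a sequence $\{\phi_n\}$ of distinct elements of $\Out_0(\Gamma)$, a pigeonhole argument shows that after passing to a subsequence we are in one of two cases: either $\{\psi_v(\phi_n)\}$ consists of distinct elements of $\Mod_0(F_v)\subset\Out(\Gamma_v)$ for some fixed $v\in V_f$; or $\psi(\phi_n)$ is eventually constant, so $\phi_n = \phi_0 t_n$ with $t_n\in\mathcal T$, and then $\{p_v(t_n)\}$ consists of distinct elements of $\mathcal T_v$ for some fixed $v\in V_c$. (In the latter case, since $\mathcal T_v$ embeds in $\Out(H)$ for a registering subgroup $H$ of $\Gamma_v$ — as promised in Section~\ref{aareps} — the corresponding elements act as distinct outer automorphisms of $H$.)

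In the first case, fix the Fuchsian vertex $v$ with $\{\psi_v(\phi_n)\}$ distinct. Since $R$ is compact in $\Xaa\subseteq\mathbb X_F(\Gamma,G)$ and $r_{\Gamma_v}:\X\to\mathbb X(\Gamma_v,G)$ is continuous, $r_{\Gamma_v}(R)$ is a compact subset of $\mathbb X_{\mathrm A}(\Gamma_v,G)$. Using the equivariance relation $r_{\Gamma_v}(\rho\circ\phi)=r_{\Gamma_v}(\rho)\circ\psi_v(\phi)$, the set $r_{\Gamma_v}(\phi_n(R))$ equals $\psi_v(\phi_n)(r_{\Gamma_v}(R))$. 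By the strong form of Theorem~\ref{thm:anosovpd} applied to $\Gamma_v$, this sequence exits every compact subset of $\mathbb X(\Gamma_v,G)$; pulling back along the continuous map $r_{\Gamma_v}$ forces $\{\phi_n(R)\}$ to exit every compact subset of $\X$. In the second case, fix the cyclic vertex $v$ with $\{p_v(t_n)\}$ distinct, fix one registering subgroup $H$ of $\Gamma_v$ that is "adapted" to $R$ in the sense that $R\subset r_H^{-1}(\mathbb X_{\mathrm A}(H,G))$ — here one needs that $R$, being compact in $\Xaa\subseteq\mathbb X_v(\Gamma,G)=\bigcup_H r_H^{-1}(\mathbb X_{\mathrm A}(H,G))$, is covered by finitely many of these open sets, so we may assume (splitting $R$ into finitely many compact pieces) that a single $H$ works. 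A finite-index subgroup of $\mathcal T_v$ preserves the conjugacy class of $H$ and acts effectively on $H$, so after a further subsequence $t_n$ induces distinct outer automorphisms $\bar t_n\in\Out(H)$ with $r_H(\phi_n(R)) = \bar t_n(r_H(\phi_0(R)))$ up to conjugation; applying Theorem~\ref{thm:anosovpd} to $H$ (and continuity of $r_H$) again gives that $\{\phi_n(R)\}$ exits every compact subset of $\X$.

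The main obstacle is the bookkeeping in the cyclic case: one must nail down precisely how $\mathcal T_v$ sits inside $\Out(H)$ for a registering subgroup $H$, check that a finite-index subgroup of $\mathcal T_v$ genuinely preserves $H$ up to conjugacy (so that the action on $\mathbb X(H,G)$ is defined on the nose), and handle the inner automorphisms of $\Gamma$ that arise when one conjugates $\phi_n(H)$ back to $H$ — these do not affect the point in $\mathbb X(H,G)$ but must be tracked so that the equivariance identity $r_H(\phi_n(\rho)) = \psi_H(\phi_n)(r_H(\rho))$ holds in $\mathbb X(H,G)$ for the appropriate homomorphism $\psi_H:\Out_0(\Gamma)\supset(\text{finite-index})\to\Out(H)$. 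One also has to make sure that the factor $\phi_0$ (from $\phi_n=\phi_0 t_n$) and the compactness-covering step do not interfere with the distinctness of the induced elements of $\Out(H)$, which follows since $\phi_0$ contributes a fixed element and left translation by it preserves distinctness. Once the map $\Out_0(\Gamma)\dashrightarrow\Out(H)$ sending (a finite-index subgroup of) $\mathcal T$ onto something containing a finite-index subgroup of the image of $\mathcal T_v$ is set up carefully — this is exactly the content of the "registering" discussion in Section~\ref{aareps} — the rest is a routine application of Theorem~\ref{thm:anosovpd} and continuity of restriction maps, together with the standard fact that a continuous preimage of a sequence exiting all compacta exits all compacta.
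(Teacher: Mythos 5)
Your proposal follows essentially the same route as the paper: split via the central extension $\mathcal{T}\to\Out_0(\Gamma)\to\Mod_F(\Gamma)$, handle the Fuchsian case by restricting to $\Gamma_v$ and the twist case by restricting to a registering subgroup $H$ singled out by compactness, and invoke Theorem~\ref{thm:anosovpd} in both cases. The only cosmetic differences are that the paper argues by contradiction (which sidesteps upgrading ``some subsequence exits every compact set'' to ``the whole sequence does'' after your pigeonhole step), and that it pins down the finite-index subgroup of $\mathcal{T}_v$ acting on $H$ explicitly as the twists by powers of the generator $a_0$ appearing in the free generating set of $H$.
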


We break the proof of Proposition \ref{strongPD} into the analysis of sequences in the twist group $\mathcal{T}$ and sequences that project to a sequence
of distinct elements of 
${\rm Mod}_F(\Gamma)$.

The following lemma is a nearly immediate consequence of Theorem \ref{thm:anosovpd} and our definitions.

\begin{Lemma}\label{lem:mhf}
If $R$ is a compact subset of $\mathbb{X}_F(\Gamma, G)$ and $\{\phi_n\}$ is a sequence of elements in $\Out_0(\Gamma)$ 
such that $\{\psi(\phi_n)\}$ is a sequence of distinct elements of $\Mod_F(\Gamma)$, then $\{\phi_n(R)\}$ exits every compact subset of $\X$.
\end{Lemma}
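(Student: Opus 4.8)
The plan is to reduce the statement to the exiting behavior recorded in Theorem~\ref{thm:anosovpd}, applied to one of the Fuchsian vertex groups. First I would unwind the hypothesis on $R$: since $R$ is a compact subset of $\mathbb{X}_F(\Gamma,G) = \bigcap_{v\in V_f} r_{\Gamma_v}^{-1}(\mathbb{X}_{\mathrm{A}}(\Gamma_v,G))$ and each restriction map $r_{\Gamma_v}$ is continuous, each image $R_v := r_{\Gamma_v}(R)$ is a compact subset of $\mathbb{X}_{\mathrm{A}}(\Gamma_v,G)$. So for every Fuchsian vertex $v$ we have a compact set of Anosov representations of $\Gamma_v$ into $G$, and Theorem~\ref{thm:anosovpd} applies to $\Gamma_v$.

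Next I would exploit the pigeonhole principle on the finite set $V_f$ of Fuchsian vertices. Since $\Mod_F(\Gamma) = \bigoplus_{v\in V_f}\Mod_0(F_v)$ and $\{\psi(\phi_n)\}$ is a sequence of distinct elements of this finite direct sum, there must be at least one vertex $w\in V_f$ for which the $w$-coordinates $\{\psi_w(\phi_n)\}$ contain infinitely many distinct elements of $\Out(\Gamma_w)$ (more precisely of $\Mod_0(F_w) \subseteq \Out(\Gamma_w)$); otherwise all coordinates would take only finitely many values and $\{\psi(\phi_n)\}$ could not be distinct. Passing to a subsequence, I may assume $\{\psi_w(\phi_n)\}$ is a sequence of distinct elements of $\Out(\Gamma_w)$.

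Now I would use the compatibility of restriction with the outer action, namely the identity $r_{\Gamma_w}(\rho\circ\phi) = r_{\Gamma_w}(\rho)\circ\psi_w(\phi)$ from Section~\ref{sec:jsj}, rephrased as $r_{\Gamma_w}(\phi_n(\rho)) = \psi_w(\phi_n)\big(r_{\Gamma_w}(\rho)\big)$ in the $\Out$-action notation. Thus $r_{\Gamma_w}(\phi_n(R)) = \psi_w(\phi_n)(R_w)$. By Theorem~\ref{thm:anosovpd} applied to $\Gamma_w$, with the compact set $R_w \subset \mathbb{X}_{\mathrm{A}}(\Gamma_w,G)$ and the sequence of distinct elements $\{\psi_w(\phi_n)\}$ of $\Out(\Gamma_w)$, the sets $\psi_w(\phi_n)(R_w)$ exit every compact subset of $\mathbb{X}(\Gamma_w,G)$. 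Hence, given any compact $D\subset\X$, its continuous image $r_{\Gamma_w}(D)$ is compact in $\mathbb{X}(\Gamma_w,G)$, so $r_{\Gamma_w}(\phi_n(R))\cap r_{\Gamma_w}(D) = \emptyset$ for all large $n$, which forces $\phi_n(R)\cap D = \emptyset$ for all large $n$. Since we only discarded a subsequence on which the conclusion already holds in the stronger form, and the original sequence was arbitrary, $\{\phi_n(R)\}$ exits every compact subset of $\X$.

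\textbf{Main obstacle.} I expect the only real subtlety is bookkeeping around subsequences: the conclusion "exits every compact set" is not inherited from a subsequence for free, so I would phrase the pigeonhole step carefully — either argue that \emph{every} subsequence of $\{\phi_n\}$ has a further subsequence for which some Fuchsian coordinate is injective (which is enough to conclude exiting for the full sequence), or fix one $w$ that works for a cofinite subset by a slightly more careful counting argument. A secondary point to be careful about is that $\psi_w$ lands in $\Out(\Gamma_w)$ and $\Gamma_w$ is itself torsion-free hyperbolic (being a surface group), so Theorem~\ref{thm:anosovpd} genuinely applies; and one should note $r_{\Gamma_w}$ is well-defined and $\psi_w$-equivariant precisely because the normalizer of $\Gamma_w$ in $\Gamma$ is $\Gamma_w$ itself, as recalled in Section~\ref{sec:jsj}.
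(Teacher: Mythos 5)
Your proposal is correct and follows essentially the same route as the paper: pigeonhole on the finite set of Fuchsian vertices to find a vertex $w$ with $\{\psi_w(\phi_n)\}$ distinct along a subsequence, use the equivariance $r_{\Gamma_w}(\phi_n(\rho))=\psi_w(\phi_n)(r_{\Gamma_w}(\rho))$, and invoke Theorem~\ref{thm:anosovpd} for $\Gamma_w$. The paper phrases this as a proof by contradiction, which disposes of the subsequence bookkeeping you flag as the main obstacle; your proposed fix (every subsequence has a further subsequence that exits every compact set) is the direct-argument equivalent and is fine.
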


\begin{proof}
If not, we can pass to a subsequence so that there exists $v\in V_f$ such that $\{\psi_v(\phi_n)\}$ is a sequence of distinct elements of $\Mod_0(F_v)$ 
and $\{\phi_n(R)\}$ does not exit every compact subset of $\X$. Then, $r_{\Gamma_v}(R)$ is a compact subset of $\mathbb{X}_{\mathrm{A}}(\Gamma_v,G)$ and 
$\{\psi_v(\phi_n)(r_{\Gamma_v}(R))\}$ does not exit every compact subset of $\mathbb{X}(\Gamma_v,G)$. However, this contradicts Theorem \ref{thm:anosovpd}.
\end{proof}

We now use Theorem \ref{thm:anosovpd} to study the action of $\mathcal{T}$ on $\mathbb X_C(\Gamma,G)$.

\begin{Lemma}\label{lem:characteristic}
If $R$ is a compact subset of $\mathbb{X}_C(\Gamma, G)$ and $\{\phi_n\}$ is a sequence of distinct elements of $\mathcal{T}$, 
then $\{\phi_n(R)\}$ exits every compact set of  $\X$.
\end{Lemma}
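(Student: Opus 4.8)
\textbf{Proof proposal for Lemma \ref{lem:characteristic}.}

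The plan is to reduce a sequence in the full twist group $\mathcal{T}$ to a sequence in a single characteristic twist subgroup $\mathcal{T}_v$, and then to transfer the problem to a registering free subgroup $H$ where Theorem \ref{thm:anosovpd} applies. Since $\mathcal{T}=\bigoplus_{v\in V_c}\mathcal{T}_v$ and $V_c$ is finite, if $\{\phi_n\}$ is a sequence of distinct elements of $\mathcal{T}$, then after passing to a subsequence there is a cyclic vertex $v$ such that $\{p_v(\phi_n)\}$ is a sequence of distinct elements of $\mathcal{T}_v$; it suffices to show that even this weaker hypothesis forces $\{\phi_n(R)\}$ to exit every compact subset of $\X$. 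So I would fix such a $v$ from now on.

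Next I would use the definition of $\mathbb{X}_C(\Gamma,G)$: since $R\subseteq\mathbb{X}_C(\Gamma,G)\subseteq\mathbb{X}_v(\Gamma,G)=\bigcup_{H\ {\rm registers}\ \Gamma_v} r_H^{-1}(\mathbb{X}_{\mathrm{A}}(H,G))$, each $\rho\in R$ lies in $r_H^{-1}(\mathbb{X}_{\mathrm{A}}(H,G))$ for some registering $H$. Because $\mathbb{X}_{\mathrm{A}}(H,G)$ is open and $r_H$ continuous, these sets are an open cover of the compact set $R$, so finitely many registering subgroups $H_1,\dots,H_k$ suffice; passing to one more subsequence, I may assume all of $R$ lies in $r_{H}^{-1}(\mathbb{X}_{\mathrm{A}}(H,G))$ for a single registering subgroup $H=H_j$. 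Thus $r_H(R)$ is a compact subset of $\mathbb{X}_{\mathrm{A}}(H,G)$. The key structural input (promised in Section \ref{aareps}: ``a finite index subgroup of the twist group $\mathcal{T}_v$ preserves $H$ up to conjugacy and embeds in $\Out(H)$'') is that, after passing to a finite-index subgroup of $\mathcal{T}_v$ — hence, after a further subsequence and after translating by a fixed element, which does not affect the exiting conclusion — each $p_v(\phi_n)$ is represented by an automorphism preserving $H$ up to conjugacy, inducing $\bar\phi_n\in\Out(H)$, and the map $p_v(\phi_n)\mapsto\bar\phi_n$ is injective on this finite-index subgroup. Hence $\{\bar\phi_n\}$ is a sequence of distinct elements of $\Out(H)$, and one checks from the formulas defining the twists $D_{z,e}$ that $r_H(\phi_n(\rho))=r_H(\rho)\circ\bar\phi_n^{-1}$, i.e.\ the $\mathcal{T}_v$-action on $\X$ intertwines with the $\Out(H)$-action on $\mathbb{X}(H,G)$ via $r_H$. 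Applying Theorem \ref{thm:anosovpd} to the compact set $r_H(R)\subset\mathbb{X}_{\mathrm{A}}(H,G)$ and the distinct elements $\bar\phi_n$, we conclude that $\{\bar\phi_n(r_H(R))\}=\{r_H(\phi_n(R))\}$ exits every compact subset of $\mathbb{X}(H,G)$. Since $r_H:\X\to\mathbb{X}(H,G)$ is continuous, it maps compact sets to compact sets, so $\{\phi_n(R)\}$ must exit every compact subset of $\X$.

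The main obstacle is the bookkeeping around the twist group and the registering subgroup: one must verify carefully that a finite-index subgroup of $\mathcal{T}_v$ genuinely acts on $H$ (up to conjugacy) and injects into $\Out(H)$ — this is exactly the content that the definition of ``registers'' was designed to deliver, and it relies on the explicit description of twists $D_{z,e}$ as stable-letter modifications together with the free product/HNN structure $\Gamma=A*_{\Gamma_e}B$ or $\Gamma=A*_{\Gamma_e}$ in which $H$ sits. A secondary subtlety is that replacing $\{\phi_n\}$ by a coset representative within a finite-index subgroup of $\mathcal{T}_v$ changes each $\phi_n(R)$ only by applying a fixed element of $\Out(\Gamma)$, which is a homeomorphism of $\X$ and therefore preserves the ``exits every compact set'' property; I would state this reduction explicitly. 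Everything else — the pigeonhole passages to subsequences over the finite index sets $V_c$ and $\{H_1,\dots,H_k\}$, openness of $\mathbb{X}_{\mathrm{A}}$, and continuity of restriction maps — is routine.
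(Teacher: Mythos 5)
Your overall strategy coincides with the paper's: reduce to a single cyclic vertex $v$ via the projection $p_v$, localize to a single registering subgroup, pass to the finite-index subgroup of $\mathcal{T}_v$ generated by twists in powers of the generator $a_0$ of $\Gamma_v$ appearing in the free basis of $H$ (correcting by a fixed element $\phi_0$, which you rightly note is harmless), inject that subgroup into $\Out(H)$, and invoke Theorem \ref{thm:anosovpd} together with continuity of $r_H$. The intertwining identity and the final ``push forward compact sets under $r_H$'' step are all as in the paper.

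There is, however, one genuine gap: the reduction to a single registering subgroup. You correctly extract a finite subcover $R\subseteq\bigcup_{j=1}^k r_{H_j}^{-1}(\mathbb{X}_{\mathrm{A}}(H_j,G))$, but then assert that ``passing to one more subsequence'' lets you assume $R$ lies in a single $r_{H}^{-1}(\mathbb{X}_{\mathrm{A}}(H,G))$. Passing to a subsequence of $\{\phi_n\}$ has no bearing on which charts contain $R$; a finite cover of $R$ does not put $R$ inside one element of the cover, and the pigeonhole move you are importing from the $V_c$ reduction sorts automorphisms, not points of $R$. As written, the subsequent application of Theorem \ref{thm:anosovpd} to ``$r_H(R)$'' is unjustified because $r_H(R)$ need not be contained in $\mathbb{X}_{\mathrm{A}}(H,G)$ for any single $H$. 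The paper avoids this by arguing by contradiction: assuming $\{\phi_n(R)\}$ meets a fixed compact set infinitely often, it produces (after a subsequence) a compact subset $R_0\subseteq R$ contained in a single $r_{H_0}^{-1}(\mathbb{X}_{\mathrm{A}}(H_0,G))$ such that $\{\phi_n(R_0)\}$ still fails to exit, and runs the rest of the argument on $R_0$. Alternatively, your direct framing can be repaired by writing $R$ as a finite union of compact pieces $R_j\subseteq r_{H_j}^{-1}(\mathbb{X}_{\mathrm{A}}(H_j,G))$, proving the exiting statement for each piece, and observing that a finite union of sequences of sets each exiting every compact set again exits every compact set; if you do this directly rather than by contradiction, you must also phrase the subsequence passages in the form ``every subsequence admits a further subsequence that exits,'' which is equivalent to the desired conclusion. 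The remaining point you flag but do not verify --- that the twists about other cyclic vertices act trivially on $H$ up to conjugacy, so that the whole of (a finite-index subgroup of) $\mathcal{T}$, and not just $\mathcal{T}_v$, intertwines with $\Out(H)$ --- is indeed needed and is supplied in the paper by the observation that no two cyclic vertices are adjacent.
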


\begin{proof} 
Let $R$ be a compact subset of $\mathbb{X}_C(\Gamma, G)$. Suppose, for a contradiction, that there exists a sequence $\{\phi_n\}$ of distinct elements of $\mathcal{T}$ such that 
$\{\phi_n(R)\}$ does not exit every compact subset of $\X$. We can pass to a subsequence so that there exists a characteristic twist subgroup $\Gamma_v$ 
such that $\{p_v(\phi_n)\}$ is a sequence of distinct elements of $\Gamma_v$ and $\{\phi_n(R)\}$ still does not exit every compact subset of $\X$. Since $R$ is compact, $r_H^{-1}(\mathbb{X}_A(H,G))$ is open for each $H$, and 
$\X$ is locally compact, we can pass to a further subsequence so that there exists a compact subset $R_0$ of $R$ and a subgroup $H_0$
of $\Gamma$ that registers $\Gamma_v$ such that $R_0\subset r_{H_0}^{-1}(\mathbb{X}_{\mathrm{A}}(H_0,G))$ and $\{\phi_n(R_0)\}$ does not exit every compact subset of $\X$.

Let $a_0$ be the generator of $\Gamma_v$ that lies in the generating set $\{a_0,a_1,\ldots,a_{n_v}\}$ of $H_0$ and let 
$\mathcal{T}_{H_0}$ be the subgroup of $\mathcal{T}_v$ generated by twists of the form $D_{z,e}$ where $z$ is a power of $a_0$ and $e$ is an edge incident to $v$.
Since $\mathcal T_{H_0}$ has finite index in $\mathcal{T}_v$, we can pass to a further subsequence so that there exists $\phi_0\in \mathcal{T}_v$ with
\[
\phi_0\circ p_v(\phi_n)=p_v(\phi_0\circ\phi_n)\in \mathcal{T}_{H_0}
\]
for all $n$, where $p_v:\mathcal{T}\to\mathcal{T}_v$ is the projection map.

Notice that if $e_i$ is an edge connecting $v$ to $w_i$, then there is a representative of $D_{z,e_i}$ that preserves $H_0$ and acts on $H_0$ 
by taking $a_i$ to $za_iz^{-1}$ and taking every other generator to itself.
If $\{e_1,\ldots,e_{n_v}\}$ are the edges adjacent to $v$, then $\mathcal T_{H_0}$ is generated by $\{D_{a_0,e_1},\ldots,D_{a_0,e_{n_v-1}}\}$.
Therefore, there is an injective homomorphism
\[
s_{H_0}:\mathcal{T}_{H_0}\to \Out(H_0).
\]
so that 
\[
r_{H_0}(\rho\circ\phi)=r_{H_0}(\rho)\circ s_{H_0}(\phi)
\]
for all $\rho\in\X$ and $\phi\in \mathcal{T}_{H_0}$.
Moreover, if $\mathcal{T}_w$ is the characteristic twist subgroup for a different cyclic vertex group, then every element of $\mathcal{T}_w$ acts 
trivially on $H_0$ (up to conjugacy), since no cyclic vertices are adjacent. Therefore, $\mathcal{T}_0=p_v^{-1}(\mathcal{T}_{H_0})$ is a finite index subgroup
of $\mathcal{T}$ and $s_{H_0}$ extends to a map
\[
s_{0}:\mathcal{T}_{0}\to \Out(H_0)
\]
that is trivial on $\mathcal{T}_w$ if $w\in V_c-\{v\}$ and such that
\[
r_{H_0}(\rho\circ\phi)=r_{H_0}(\rho)\circ s_{0}(\phi)
\]
for all $\rho\in\X$ and $\phi\in \mathcal{T}_{0}$.

Since $\{s_{0}(\phi_0\circ\phi_n)\}$ is a sequence of distinct elements of $\Out(H_0)$, Theorem \ref{thm:anosovpd} implies that 
$\{s_{0}(\phi_0\circ\phi_n)(r_{H_0}(R_0))\}$ exits every compact subset of $\mathbb{X}(H_0,G)$. Therefore, $\{\phi_0(\phi_n(R_0))\}$ exits every compact subset of $\X$. Since $\phi_0$ induces a homeomorphism of $\X$, it follows that $\{\phi_n(R_0)\} = \{\phi_0^{-1}( \phi_0(\phi_n(R_0)))\}$ also exits every compact subset of $\X$. This contradiction completes the proof.
\end{proof}

We now combine Lemmas \ref{lem:mhf} and \ref{lem:characteristic} to establish Proposition \ref{strongPD}.

\medskip

{\em Proof of Proposition \ref{strongPD}:}
If the proposition fails, then there exists a compact subset $R$ of $\Xaa$ and a sequence $\{\phi_n\}$ of distinct elements of $\Out_0(\Gamma)$ such that 
$\{\phi_n(R)\}$ does not exit every compact subset of $\X$. After passing to a subsequence, we can assume that $\{\psi(\phi_n)\}$ is either
a sequence of distinct elements in $\Mod_F(\Gamma)$ or is constant. If $\{\psi(\phi_n)\}$ is a sequence of distinct elements, then Lemma \ref{lem:mhf} immediately implies that $\{\phi_n(R)\}$ leaves every compact subset of $\X$. If $\{\psi(\phi_n)\}$ is a constant sequence, then Theorem \ref{thm:selaout} implies that there exists a sequence $\{\beta_n\}$ of distinct elements of $\mathcal{T}$ such that $\phi_n = \phi_1 \circ \beta_n$ for all $n$. Lemma \ref{lem:characteristic} implies that $\{\beta_n(R)\}$ exits every compact subset of $\X$. Since $\phi_1$ induces a homeomorphism of $\X$, it follows that $\{\phi_n(R)\} = \{\phi_1 (\beta_n(R))\}$ also exits every compact subset of $\X$. This contradiction completes the proof of Proposition \ref{strongPD} and the proof of Theorem \ref{thm:main}.
\qed

\section{Strongly amalgam Anosov representations}\label{sec:saa}

Strongly amalgam Anosov representations are a natural and easy to construct class of amalgam Anosov 
representations.

\begin{Definition} 
\label{saadef}
If $\Gamma$ is a one-ended, torsion-free word hyperbolic group and $G$ is a semisimple Lie group with finite
center, we say that a representation $\rho:\Gamma\to G$ is {\em strongly amalgam Anosov} if there
exists a pair $P^\pm$ of opposite parabolic subgroups such that if $\Gamma_v$ is a Fuchsian or rigid vertex
group of the JSJ splitting of $\Gamma$, then $\rho|_{\Gamma_v}$ is $(P^+,P^-)$-Anosov.
\end{Definition}

Let $\Xsa\subset\X$ be the set of strongly amalgam Anosov representations.
In the appendix we will show that strongly amalgam Anosov representations are amalgam Anosov, see Theorem \ref{main-appendix}.
On the other hand, it is an immediate consequence of Lemma \ref{lem:qconvexAnosov} and the fact that vertex subgroups are quasiconvex that Anosov representations are strongly amalgam Anosov. Therefore,
\[
\Xa\subset\Xsa\subset\Xaa.
\]
Moreover, the proof of Proposition \ref{AareAA} may easily be adapted to show that $\Xsa$ is an open, $\Out(\Gamma)$-invariant
subset of $\X$. It then follows immediately from our main result, Theorem \ref{thm:main}, that $\Xsa$ is a domain of discontinuity for
the action of $\Out(\Gamma)$ on $\X$.

\begin{Corollary}
If $\Gamma$ is a one-ended word hyperbolic group and $G$ is a semisimple Lie group with finite center,
then $\Xsa$ is an open, $\Out(\Gamma)$-invariant subset of $\X$ and $\Out(\Gamma)$ acts properly discontinuously
on $\Xsa$.
\end{Corollary}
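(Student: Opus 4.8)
The plan is to reduce the Corollary entirely to results already established in the paper, since $\Xsa$ sits between $\Xa$ and $\Xaa$. First I would verify that $\Xsa$ is open in $\X$. The argument is a mild variant of the proof of Proposition \ref{AareAA}: fix a pair $P^\pm$ and observe that the set of representations whose restriction to every Fuchsian or rigid vertex group is $(P^+,P^-)$-Anosov is $\bigcap_{v} r_{\Gamma_v}^{-1}\big(\mathbb{X}_{(P^+,P^-)}(\Gamma_v,G)\big)$, a finite intersection of preimages under continuous restriction maps of open sets (openness of $(P^+,P^-)$-Anosov representations is part of Theorem \ref{thm:Guichard-Wienhard}(1)); $\Xsa$ is the union of these sets over the finitely many conjugacy classes of pairs of proper opposite parabolics, hence open. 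Next I would check $\Out(\Gamma)$-invariance: if $\phi\in\Out(\Gamma)$, then by Theorem \ref{thm:jsj}(4) $\phi$ permutes the conjugacy classes of vertex groups, carrying Fuchsian vertices to Fuchsian vertices and rigid vertices to rigid vertices (the three types are distinguished intrinsically, so an automorphism cannot mix them), and since $\phi(\rho)=\rho\circ\phi^{-1}$ this permutes the sets $r_{\Gamma_v}^{-1}(\mathbb{X}_{(P^+,P^-)}(\Gamma_v,G))$ among themselves; thus $\Xsa$ is preserved.

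Having established that $\Xsa$ is open and $\Out(\Gamma)$-invariant, the remaining point is proper discontinuity of the $\Out(\Gamma)$-action on $\Xsa$. Here I would invoke Theorem \ref{main-appendix}, which gives $\Xsa\subseteq\Xaa$, together with Theorem \ref{thm:main}, which asserts that $\Out(\Gamma)$ acts properly discontinuously on $\Xaa$. Proper discontinuity is inherited by any $\Out(\Gamma)$-invariant subspace: if $R\subseteq\Xsa$ is compact, then $R$ is a compact subset of $\Xaa$, so only finitely many $\phi\in\Out(\Gamma)$ satisfy $\phi(R)\cap R\neq\emptyset$, and this is exactly proper discontinuity on $\Xsa$. (One should note $\Xsa$ is itself locally compact, being open in the locally compact space $\X$, so the notion of proper discontinuity is the standard one.) This completes the proof.

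I do not anticipate any genuine obstacle here; the Corollary is a formal bookkeeping consequence once Theorem \ref{main-appendix} is available. The only mild subtlety worth stating carefully is that an automorphism of $\Gamma$ respects the \emph{type} of each vertex group in the JSJ splitting — cyclic, Fuchsian, and rigid are defined by intrinsic algebraic properties (maximal cyclic; isomorphic to a surface group with the edge groups realizing the boundary; admitting no compatible $\Z$-splitting), so the permutation of vertices furnished by Theorem \ref{thm:jsj}(4) preserves these classes and in particular sends the union over Fuchsian-or-rigid vertices to itself. With that observation in hand, the openness and invariance of $\Xsa$ follow by copying the proof of Proposition \ref{AareAA} verbatim (dropping the cyclic-vertex clause, and fixing $P^\pm$ before taking the finite union over parabolic types), and proper discontinuity is immediate from the containment $\Xsa\subseteq\Xaa$ and Theorem \ref{thm:main}.
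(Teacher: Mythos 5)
Your argument is correct and follows essentially the same route as the paper, which likewise obtains openness and invariance by adapting the proof of Proposition \ref{AareAA} (fixing the pair $P^\pm$ and taking the finite union over conjugacy classes of pairs) and deduces proper discontinuity from the chain $\Xsa\subseteq\Xaa$ supplied by Theorem \ref{main-appendix} together with Theorem \ref{thm:main}. Your added remarks — that automorphisms preserve the type of each JSJ vertex and that proper discontinuity passes to invariant subspaces — are exactly the points the paper leaves implicit.
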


\section{Examples}\label{sec:ex}

Since 
\[
\Xa\subset\Xsa\subset\Xaa
\]
and each of these sets is a domain of discontinuity for the action of $\Out(\Gamma)$ on $\X$, it
is natural to ask whether these inclusions are proper and whether $\Xaa$ is a maximal domain
of discontinuity for $\Out(\Gamma)$.

In this section, we study families of examples exhibiting various phenomena. 
In Section \ref{rigidsubsec} we observe that if $\Gamma$ is rigid, then $\Xa=\Xsa\ne\Xaa=\X$.
In Section \ref{surfacegroups} we survey previous results and conjectures in the case where $\Gamma$
is a closed surface group. In Section \ref{3manifoldgroups}, we see that if $\Gamma$ is the fundamental group of a 
compact, hyperbolizable \hbox{3-manifold} with boundary,
but not a surface group, then $\mathbb X_{\rm A}(\Gamma,\PSL_2(\C))\ne\mathbb X_{{\rm A}^2}(\Gamma,\PSL_2(\C))$. 
In Section \ref{rankone}, we exhibit examples where $\Xa$ is empty,
but $\Xsa$ has positive dimension. In Section \ref{aanotsa}, we exhibit nonrigid groups $\Gamma$, where $\Xsa\ne\Xaa$.
Finally, in Section \ref{notmax},
we exhibit examples where $\Xaa$ is not a maximal domain of discontinuity for the action of $\Out(\Gamma)$ on $\X$.

\subsection{Rigid Groups}\label{rigidsubsec}

If a one-ended hyperbolic group $\Gamma$ has a trivial JSJ splitting then it is either rigid, i.e., admits no $\Z$-splitting, or is a surface group. If $\Gamma$ is rigid and nontrivial then every representation in $\X$ is  amalgam Anosov, while the trivial representation
fails to be either Anosov or strongly amalgam Anosov, 
so 
$$\Xa=\Xsa\ne\Xaa=\X.$$

On the other hand, if $\Gamma$ is not rigid, then the trivial representation is not amalgam Anosov,
so $\X\ne\Xaa$. Further, $\Out(\Gamma)$ is infinite and does not act properly discontinuously on all of $\X$, 
since it fixes the trivial representation.

\begin{Lemma}
\label{rigid case}
If $\Gamma$ is a one-ended, torsion-free hyperbolic group and $G$ is a semisimple Lie group with finite center, then $\X=\Xaa$ if and only if $\Gamma$ is rigid. 
Moreover, $\Out(\Gamma)$ acts properly discontinuously on $\X$ if and only if $\Gamma$ is rigid.
\end{Lemma}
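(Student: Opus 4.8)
The plan is to treat the rigid and non-rigid cases separately, reducing each to whether the two conditions of Definition \ref{aadef} are vacuous, and using Theorem \ref{thm:main}, Theorem \ref{thm:jsj}, and the fact that the trivial representation $\rho_0\in\X$ is fixed by all of $\Out(\Gamma)$ (indeed $\rho_0\circ\phi^{-1}=\rho_0$ for every $\phi$, since $\phi$ acts by $\rho\mapsto\rho\circ\phi^{-1}$).

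If $\Gamma$ is rigid, its JSJ graph of groups has no edges (otherwise $\Gamma$ would split over an edge group), hence is a single vertex carrying $\Gamma$ itself; since $\Gamma$ is one-ended it is not infinite cyclic, and since closed surface groups admit $\Z$-splittings this vertex is not Fuchsian, so it is a rigid vertex and there are no Fuchsian or cyclic vertex groups. Conditions (1) and (2) of Definition \ref{aadef} are then vacuously satisfied, so $\Xaa=\X$, and Theorem \ref{thm:main}(2) shows $\Out(\Gamma)$ acts properly discontinuously on $\X=\Xaa$. Conversely, suppose $\Gamma$ is not rigid. Then it has a nontrivial $\Z$-splitting, so by Theorem \ref{thm:jsj}(3) some infinite cyclic subgroup conjugates into a cyclic or Fuchsian vertex group; in particular $\Gamma$ has a Fuchsian or cyclic vertex group. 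I would then verify $\rho_0\notin\Xaa$: if $v$ is a Fuchsian vertex then $\Gamma_v$ is infinite, so $\rho_0|_{\Gamma_v}$ is not faithful, hence not Anosov by Theorem \ref{thm:Guichard-Wienhard}(2), so (1) fails; if $v$ is a cyclic vertex then every subgroup $H$ registering $\Gamma_v$ is free of rank at least $2$ (it is freely generated by $a_0$ together with one element per adjacent vertex), so $\rho_0|_H$ is not faithful and not Anosov, and (2) fails. In either case $\X\neq\Xaa$. For the final clause, a non-rigid one-ended torsion-free hyperbolic group has infinite $\Out(\Gamma)$; since $\Out(\Gamma)$ fixes $\rho_0$, applying proper discontinuity to the compact set $\{\rho_0\}$ would force $\Out(\Gamma)$ to be finite, a contradiction, so $\Out(\Gamma)$ does not act properly discontinuously on $\X$.

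Combining the two cases gives both equivalences. Essentially the whole argument is an unwinding of the definitions together with Theorem \ref{thm:main}; the only ingredient not established in the paper is that non-rigidity of $\Gamma$ forces $\Out(\Gamma)$ to be infinite, and that is the point I would be most careful to reference correctly (it follows from the structure of $\Out(\Gamma)$ for one-ended hyperbolic groups recalled in Section \ref{sec:jsj}, or from Paulin's theorem; cf.\ the discussion immediately preceding the lemma).
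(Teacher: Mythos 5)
Your proof is correct and follows essentially the same route as the paper, which proves this lemma via the informal discussion immediately preceding it: rigidity makes both conditions of the definition vacuous so $\Xaa=\X$ and Theorem \ref{thm:main} gives proper discontinuity, while non-rigidity forces the trivial representation to lie outside $\Xaa$ and to be a fixed point of the infinite group $\Out(\Gamma)$. You also correctly isolate the one input not proved in the paper (non-rigidity implies $\Out(\Gamma)$ is infinite), which the paper likewise asserts without proof.
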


\subsection{Surface groups}
\label{surfacegroups}

If $\Gamma$ is the fundamental group of a closed, oriented hyperbolic surface $S$, then
\[
\Xa=\Xsa=\Xaa,
\]
and $\mathbb{X}_{\mathrm{A}}(\pi_1(S),\PSL_2(\R))$ is the disjoint union of the Teichm\"uller spaces $\mathcal{T}(S) \cup \mathcal{T}(\bar S)$ of $S$ and $S$ with the opposite orientation. If $G=\PSL_2(\C)$, then $\mathbb{X}_{\mathrm{A}}(\pi_1(S),\PSL_2(\C))$ is the space of quasifuchsian representations and may be identified with $\mathcal{T}(S)\times\mathcal{T}(\bar S)$. Goldman made the following conjecture.

\begin{Conjecture}[Goldman \cite{goldman}]
If $S$ is a closed orientable hyperbolic surface and $G$ is $\PSL_2(\R)$ or $\PSL_2(\C)$, then $\mathbb{X}_{\mathrm{A}}(\pi_1(S),G)$ is a maximal domain of discontinuity for the action of $\Out(\pi_1(F))$ on $\mathbb{X}(\pi_1(S),G)$. Moreover, $\Out(\pi_1(S))$ acts ergodically on
\[
\mathbb{X}(\pi_1(S),\PSL_2(\C)) \smallsetminus \mathbb{X}_{\mathrm{A}}(\pi_1(S),\PSL_2(\C))
\]
and on each component of 
\[
\mathbb{X}(\pi_1(S),\PSL_2(\R)) \smallsetminus \mathbb{X}_{\mathrm{A}}(\pi_1(S),\PSL_2(\R))
\]
(unless $S$ has genus 2 and the component consists of representations with Euler number zero).
\end{Conjecture}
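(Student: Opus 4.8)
\emph{This is a conjecture of Goldman that remains open, so what follows is a strategy together with an identification of the essential obstacle rather than a complete argument.} The plan is to reduce the maximality assertion to the ergodicity assertion and then to attack ergodicity through Goldman's twist-flow machinery. For the reduction, I would show that \emph{ergodicity implies maximality}: by the naturality of Goldman's symplectic form on the smooth locus of $\mathbb{X}(\pi_1(S),G)$, the associated Liouville volume determines an $\Out(\pi_1(S))$-invariant measure class of full support. If $\mathcal D$ were a domain of discontinuity with $\mathbb{X}_{\mathrm{A}}(\pi_1(S),G)\subsetneq\mathcal D$, then $U=\mathcal D\smallsetminus\mathbb{X}_{\mathrm{A}}(\pi_1(S),G)$ would be a nonempty, open, $\Out(\pi_1(S))$-invariant subset of the complement, of positive measure, on which the action is properly discontinuous. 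Properly discontinuous actions have closed, discrete orbits, so no orbit is dense in $U$; but ergodicity on the complement (respectively, on the component containing $U$) forces almost every orbit to be dense, contradicting the positivity of the measure of $U$. This step is routine once ergodicity is known, and it treats $\PSL_2(\R)$ and $\PSL_2(\C)$ uniformly.

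Next I would set up the ergodicity argument along the lines of Goldman's proof for compact target groups. By Dehn--Nielsen--Baer, $\Out(\pi_1(S))$ is the extended mapping class group $\Mod^{\pm}(S)$, so it is generated by Dehn twists together with an orientation-reversing involution. For each simple closed curve $c$ there is a Hamiltonian (Goldman) twist flow on $\mathbb{X}(\pi_1(S),G)$, generated by a trace function along $c$, whose time-one map realizes the Dehn twist $T_c$. The plan is to exhibit enough independent twist flows that their joint orbits sweep out almost every tangent direction, and then to run a Hopf/Fubini argument: slice the character variety by the level sets of the trace functions, note that the conditional measures along twist orbits are invariant, and propagate ergodicity from the one-parameter flows to the group they generate. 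For $\PSL_2(\R)$ one argues component by component, the Euler number being the locally constant invariant labeling the components of the complement; for $\PSL_2(\C)$ the holomorphic symplectic structure plays the analogous role on the single connected complement.

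The hard part, and the reason the statement is a conjecture, is that $\PSL_2(\R)$ and $\PSL_2(\C)$ are \emph{non-compact}. In Goldman's compact-group setting the symplectic volume is finite and the twist flows restrict to irrational toral translations on the level sets of trace functions, which makes the Hopf argument run cleanly; here the level sets may be non-compact, the invariant measure is infinite, and a twist flow along a curve $c$ with $\rho(c)$ loxodromic is genuinely hyperbolic rather than elliptic, so the clean toral dynamics is lost and one must control mass escaping to infinity and accumulating near the reducible locus and the frontier of the Anosov set. The interaction of the twist flows with the Milnor--Wood strata, the singular stratum of reducible representations, and the accumulation of $\mathbb{X}_{\mathrm{A}}(\pi_1(S),G)$ onto its own boundary all demand measure-theoretic estimates that are not presently available in general. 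Finally, the explicit genus-$2$, Euler-number-zero exception is a concrete warning that certain components are genuinely non-ergodic, so any successful proof must isolate and excise exactly such degenerate components; I expect this to be the central obstacle.
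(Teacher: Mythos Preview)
You are right that this is an open conjecture; the paper does not prove it either. It is stated as Goldman's conjecture and is followed only by a survey of partial progress (March\'e--Wolff for genus~$2$ and Euler number~$\pm 1$, Souto for Euler number~$0$ in higher genus, and Lee's result that no domain of discontinuity can meet the boundary of quasifuchsian space). There is therefore no proof in the paper to compare your proposal against, and your decision to present a strategy with an honest account of the obstruction is the appropriate response.

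One small correction to your reduction step: ergodicity does not literally say that almost every orbit is dense. The cleaner way to finish that paragraph is to note that if $U$ is open, $\Out(\pi_1(S))$-invariant, and of positive Liouville measure, then ergodicity forces $U$ to have full measure in the relevant component; but a properly discontinuous action of a countable group on a set of full measure admits invariant measurable subsets of intermediate measure (pull back any proper measurable subset of the quotient), contradicting ergodicity. This avoids the orbit-density claim, which would require an additional topological transitivity argument.
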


There has been some recent progress on this conjecture when $G=\PSL_2(\mathbb R)$. March\'e and Wolff \cite{marche-wolff} proved that if $S_2$ is the closed orientable surface of genus 2, then $\Out(\pi_1(S_2))$ acts ergodically on the two components of $\mathbb{X}(\pi_1(S_2),\PSL_2(\R))$ consisting of representations of Euler number $1$ and $-1$. Souto \cite{souto} 
subsequently proved that if $S$ does not have genus two, then $\Out(\pi_1(S))$ acts ergodically on the component consisting of representations with Euler number zero. In genus two, March\'e and Wolff showed that the set of nonelementary representations with Euler number zero is the disjoint union of two $\Out(\Gamma)$-invariant open sets on which the action is ergodic.

Lee \cite{lee-ibundle} proved that if $S$ is a closed orientable surface and $\Out(\pi_1(S))$ preserves and acts properly discontinuously on an open subset $U$ inside $\mathbb{X}(\pi_1(S),\PSL_2(\C))$, then $U$ cannot intersect $\partial \mathbb{X}_{\mathrm{A}}(\pi_1(S),\PSL_2(\C))$. 
On the other hand, 
Lee \cite{lee-ibundle} proved that if $S$ is a closed, nonorientable, hyperbolic surface, then $\mathbb{X}_{\mathrm{A}}(\pi_1(S),\PSL_2(\C))$ is not a maximal domain of discontinuity.

\begin{Theorem}[Lee \cite{lee-ibundle}]
\label{lee-nonmax}
If $S$ is a closed, nonorientable hyperbolic surface, then there is an open $\Out(\pi_1(S))$-invariant subset $W(S)$ of $\mathbb{X}(\pi_1(S),\PSL_2(\C))$ such that
\begin{enumerate}

\item
$\Out(\pi_1(S))$ acts properly discontinuously on $W(S)$,

\item
$\mathbb{X}_{\mathrm{A}}(\pi_1(S),\PSL_2(\C))=\mathbb{X}_{\mathrm{A}^2}(\pi_1(S),\PSL_2(\C))$ is a proper subset of $W(S)$, and

\item
$W(S)$ contains representations that are neither discrete, nor faithful.

\end{enumerate}
\end{Theorem}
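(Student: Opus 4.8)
The strategy exploits a feature that a closed nonorientable surface $S$ has but a closed orientable one does not: it carries one-sided simple closed curves, and these yield splittings of $\pi_1(S)$ that lie on no canonical JSJ decomposition, so constraining a representation only along the orientable part of such a splitting should produce a domain of discontinuity strictly larger than $\mathbb{X}_{\mathrm{A}}(\pi_1(S),\PSL_2(\C))$. First note that, as its JSJ splitting is a single Fuchsian-type vertex equal to all of $\pi_1(S)$, amalgam Anosov coincides with Anosov, so $\mathbb{X}_{\mathrm{A}}(\pi_1(S),\PSL_2(\C))=\mathbb{X}_{\mathrm{A}^2}(\pi_1(S),\PSL_2(\C))$, giving the equality in (2). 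For a one-sided simple closed curve $\gamma\subset S$ let $M_\gamma$ be a M\"obius-band regular neighborhood and $\Sigma_\gamma=S\smallsetminus\mathrm{int}(M_\gamma)$; then $\Sigma_\gamma$ is a compact \emph{orientable} surface with one boundary circle representing $\gamma^2$, the inclusion induces a quasiconvex embedding $\pi_1(\Sigma_\gamma)\hookrightarrow\pi_1(S)$ of infinite index, and $\pi_1(S)=\pi_1(\Sigma_\gamma)\ast_{\langle\gamma^2\rangle}\langle\gamma\rangle$. I would set
\[
W(S)=\bigcup_{\gamma}\, r_{\pi_1(\Sigma_\gamma)}^{-1}\bigl(\mathbb{X}_{\mathrm{A}}(\pi_1(\Sigma_\gamma),\PSL_2(\C))\bigr),
\]
the union over all isotopy classes of one-sided simple closed curves $\gamma\subset S$, with $r_{\pi_1(\Sigma_\gamma)}$ the restriction map; here $\pi_1(\Sigma_\gamma)$ is free, so $\mathbb{X}_{\mathrm{A}}(\pi_1(\Sigma_\gamma),\PSL_2(\C))$ is the space of convex cocompact representations, and on $W(S)$ the element $\rho(\gamma)$ is automatically loxodromic since its square $\rho(\gamma^2)=\rho(\partial\Sigma_\gamma)$ is.

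Openness of $W(S)$ follows from openness of $\mathbb{X}_{\mathrm{A}}$ (Theorem~\ref{thm:Guichard-Wienhard}(1)) and continuity of the $r_{\pi_1(\Sigma_\gamma)}$, and $\Out(\pi_1(S))\cong\Mod(S)$ preserves $W(S)$ because it permutes one-sided curves and carries the splitting attached to $\gamma$ to that attached to $\phi(\gamma)$. If $\rho$ is Anosov, hence convex cocompact since $\PSL_2(\C)$ has rank one, then $\rho|_{\pi_1(\Sigma_\gamma)}$ is convex cocompact by quasiconvexity and Lemma~\ref{lem:qconvexAnosov}, so $\mathbb{X}_{\mathrm{A}}(\pi_1(S),\PSL_2(\C))\subseteq W(S)$. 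For the strict inclusion and for (3), start from a convex cocompact $\rho_0$ on $\pi_1(\Sigma_\gamma)$ with $\rho_0(\partial\Sigma_\gamma)$ loxodromic of axis $\ell$ and extend over $\pi_1(S)=\pi_1(\Sigma_\gamma)\ast_{\langle\gamma^2\rangle}\langle\gamma\rangle$ by sending $\gamma$ to a loxodromic of axis $\ell$ with square $\rho_0(\partial\Sigma_\gamma)$; these representations all lie in $W(S)$ and vary in a positive-dimensional family once $\rho_0$ is allowed to move, and exactly as in Canary--Storm~\cite{Canary-Storm} one finds among them representations whose image is not discrete --- the fixed points of $\rho(\gamma)$ lie on the limit set of $\rho_0(\pi_1(\Sigma_\gamma))$, so no Klein--Maskit-type combination is available --- and, perturbing further, representations that are not faithful. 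Any such $\rho$ lies in $W(S)\smallsetminus\mathbb{X}_{\mathrm{A}}(\pi_1(S),\PSL_2(\C))$, establishing both the strict inclusion in (2) and property (3).

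It remains to prove proper discontinuity, which is the heart of the matter. Given a compact $R\subset W(S)$ and distinct $\phi_n\in\Out(\pi_1(S))$, a shrinking argument reduces to the case $R\subset r_{\pi_1(\Sigma_\gamma)}^{-1}(\mathbb{X}_{\mathrm{A}}(\pi_1(\Sigma_\gamma),\PSL_2(\C)))$ for a single one-sided curve $\gamma$, so that $\{\rho|_{\pi_1(\Sigma_\gamma)}:\rho\in R\}$ is a compact family of convex cocompact representations of the free group $\pi_1(\Sigma_\gamma)$. If $\{\phi_n(\gamma)\}$ contains only finitely many isotopy classes we may assume $\phi_n(\gamma)=\gamma$, so each $\phi_n$ preserves the splitting and restricts to $\bar\phi_n\in\Out(\pi_1(\Sigma_\gamma))$; since the restriction $\mathrm{Stab}_{\Out(\pi_1(S))}(\gamma)\to\Out(\pi_1(\Sigma_\gamma))$ has trivial kernel (an element of the kernel is trivial on $\pi_1(\Sigma_\gamma)$, hence fixes $\gamma^2=\partial\Sigma_\gamma$, hence fixes $\gamma$ since surface groups have unique roots), the $\bar\phi_n$ are distinct, and Theorem~\ref{thm:anosovpd} for $\Out(\pi_1(\Sigma_\gamma))$ acting on $\mathbb{X}_{\mathrm{A}}(\pi_1(\Sigma_\gamma),\PSL_2(\C))$, together with continuity of $r_{\pi_1(\Sigma_\gamma)}$, forces $\{\phi_n(R)\}$ out of every compact subset of $\mathbb{X}(\pi_1(S),\PSL_2(\C))$ --- this is the same mechanism as in the proof of Theorem~\ref{thm:main}. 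The remaining, and genuinely hard, case is when $\{\phi_n(\gamma)\}$ is infinite: the subsurface on which one controls $\rho$ is being dragged off to infinity in the set of curves, no single subgroup is stabilized, and one must show that a sequence $\sigma_n=\phi_n(\rho_n)$ with $\rho_n\in R$ cannot stay in a compact set while each $\sigma_n$ is convex cocompact on $\pi_1(\Sigma_{\phi_n(\gamma)})$ and $i(\phi_n(\gamma),c)\to\infty$ for a fixed filling collection $c$. I expect this to be the main obstacle; the way to handle it is to pass to algebraic and geometric limits of the $\sigma_n$ and run a Canary--Storm-style argument with pleated surfaces, using that degenerations of such representations are detected by curves of bounded length, to derive a contradiction --- precisely the input that does not reduce to the soft machinery used for Theorem~\ref{thm:main}.
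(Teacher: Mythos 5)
First, a point of comparison: the paper does not prove this statement at all --- Theorem \ref{lee-nonmax} is quoted from Lee \cite{lee-ibundle} --- so there is no internal proof to measure yours against, and your attempt must stand on its own. It does not, and the fatal gap is exactly where you flag it: proper discontinuity when the curves $\phi_n(\gamma)$ run through infinitely many isotopy classes. Your $W(S)$ is a union of infinitely many open sets, each defined by an Anosov condition on a single splitting and individually not $\Out(\pi_1(S))$-invariant, with no uniformity across splittings; a union of that kind need not be a domain of discontinuity (even a union of two invariant domains of discontinuity can fail, which is why the paper's Proposition \ref{Xaanotmaximal} needs the stronger ``exits every compact subset of the entire character variety'' property from Theorem \ref{thm:anosovpd}). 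The appeal to ``algebraic and geometric limits and pleated surfaces'' is a statement of intent, not an argument, and this is the heart of the theorem. It is precisely to build in the missing uniformity that Lee defines $W(S)$ by a single Minsky-style stability condition --- uniform quasi-geodesity of the orbit map along a $\Mod(S)$-invariant family of geodesics --- rather than as a curve-by-curve union of Anosov conditions; properness then follows as for primitive-stable representations. Without that, or a genuine proof of your limiting argument, part (1) is unproven.

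Several subsidiary claims are also wrong or unjustified. The complement $\Sigma_\gamma$ of a M\"obius band neighborhood of a one-sided simple closed curve need not be orientable: $\chi(\Sigma_\gamma)=\chi(S)$ and $\Sigma_\gamma$ has one boundary circle, while an orientable surface with one boundary circle has odd Euler characteristic, so for $S$ of even nonorientable genus $\Sigma_\gamma$ is \emph{never} orientable (and even in odd genus only special one-sided curves have orientable complement). This does not destroy the definition, since $\pi_1(\Sigma_\gamma)$ is free regardless, but it undercuts your framing. More seriously for (2) and (3): a loxodromic element of $\PSL_2(\C)$ has exactly two square roots, both loxodromic with the same axis, so once $\rho|_{\pi_1(\Sigma_\gamma)}$ is chosen there is no continuous freedom in $\rho(\gamma)$ whatsoever; your ``positive-dimensional family'' is just the Schottky space of $\pi_1(\Sigma_\gamma)$, and ``perturbing further'' to lose faithfulness is not an available move. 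Whether $\langle\rho(\pi_1(\Sigma_\gamma)),\rho(\gamma)\rangle$ is ever non-discrete or non-faithful while $\rho|_{\pi_1(\Sigma_\gamma)}$ remains convex cocompact is a genuine question about square roots of elements of Schottky groups that your sketch does not answer, so the strictness in (2) and all of (3) also remain unestablished.
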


\subsection{3-manifold groups}
\label{3manifoldgroups}

Suppose that $M$ is a compact, hyperbolizable 3-manifold whose boundary is nonempty with no toroidal components and that $\pi_1(M)$ is one-ended and not a surface group. Sullivan \cite{sullivan} showed that the set $\mathbb{X}_{\mathrm{A}}(\pi_1(M),\PSL_2(\C))$ of convex cocompact representations is the interior of the set $AH(M)$ of discrete, faithful representations of $\pi_1(M)$ into $\PSL_2(\C)$. J\o rgensen \cite{jorgensen} showed that $AH(M)$ is a closed subset of $\mathbb{X}(\pi_1(M),\PSL_2(\C))$.

Canary and Storm \cite{Canary-Storm} described a domain of discontinuity for the action of $\Out(\Gamma)$ on $\mathbb{X}(\pi_1(M),\PSL_2(\C))$ that is strictly larger than $\mathbb{X}_{\mathrm{A}}(\pi_1(M),\PSL_2(\C))$ and contains representations that are not discrete and faithful. It seems likely that \hbox{$\mathbb{X}_{\mathrm{A}^2}(\pi_1(M),\PSL_2(\C))$} is contained in their domain of discontinuity. The following result is immediate from their analysis.

\begin{Theorem}
\label{CS}
Suppose that $M$ is a compact, hyperbolizable 3-manifold whose boundary is nonempty with no toroidal components and that $\pi_1(M)$ is one-ended and not a surface group. Then $\mathbb{X}_{\mathrm{A}}(\pi_1(M),\PSL_2(\C))$ is a proper subset of $\mathbb{X}_{\mathrm{A}^2}(\pi_1(M),\PSL_2(\C))$.

Moreover, the set
\[
\mathbb{X}_{\mathrm{A}^2}(\pi_1(M),\PSL_2(\C)) \smallsetminus \mathbb{X}_{\mathrm{A}}(\pi_1(M),\PSL_2(\C))
\]
of amalgam Anosov representations that are not Anosov, contains representations that are discrete and faithful and representations that are not discrete and faithful.
\end{Theorem}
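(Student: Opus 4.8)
The plan is to reduce everything to the construction of a single representation and then invoke the analysis of Canary--Storm \cite{Canary-Storm}. First, the statement about discrete faithful representations already gives properness of the inclusion: since $\PSL_2(\C)$ has real rank one, a representation is Anosov if and only if it is convex cocompact (Theorem~\ref{thm:Guichard-Wienhard}(4)), so a discrete faithful amalgam Anosov representation that fails to be convex cocompact is amalgam Anosov but not Anosov. Second, the non-discrete examples follow formally once the discrete faithful one is in place: by J\o rgensen \cite{jorgensen} the set $AH(M)$ of discrete faithful representations is closed, so $\mathbb{X}(\pi_1(M),\PSL_2(\C))\smallsetminus AH(M)$ is a nonempty open set of representations that are not discrete and faithful, hence not Anosov (Theorem~\ref{thm:Guichard-Wienhard}(2)); and if the representation $\rho_0$ constructed below lies on the frontier of $AH(M)$ and is amalgam Anosov, then, because $\mathbb{X}_{\mathrm{A}^2}(\pi_1(M),\PSL_2(\C))$ is open (Proposition~\ref{AareAA}), a neighborhood of $\rho_0$ lies in it and meets $\mathbb{X}(\pi_1(M),\PSL_2(\C))\smallsetminus AH(M)$, yielding the desired non-discrete members of $\mathbb{X}_{\mathrm{A}^2}\smallsetminus\mathbb{X}_{\mathrm{A}}$. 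So it remains to construct a geometrically finite, discrete, faithful representation $\rho_0$, lying on the frontier of $AH(M)$, which is amalgam Anosov.

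To produce $\rho_0$ I would pinch a suitably chosen curve on $\partial M$, as in Canary--Storm. Using the dictionary (Jaco--Shalen, Johannson) between the JSJ splitting of $\pi_1(M)$ and the characteristic submanifold $\Sigma(M)$ --- the $I$-bundle components of which give the Fuchsian vertex groups and the Seifert solid-torus components the cyclic vertex groups --- choose an essential, doubly incompressible simple closed curve $\gamma\subset\partial M$ that is not freely homotopic into $\Sigma(M)$, equivalently is conjugate into no Fuchsian or cyclic vertex group. Such $\gamma$ exists because $\pi_1(M)$ is one-ended, so $\partial M$ is incompressible and $\pi_1(\partial M)$ is not carried by the finitely many ``thin'' (free or cyclic) Fuchsian and cyclic vertex subgroups; when $M$ is acylindrical this constraint is vacuous, and any essential curve works. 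By Thurston's hyperbolization of pared Haken manifolds applied to $(M,\gamma)$ there is a geometrically finite $\rho_0\in AH(M)$ whose only conjugacy class of parabolics is $\langle\gamma\rangle$. Having a parabolic, $\rho_0$ is not convex cocompact, hence (Sullivan \cite{sullivan}) lies on the frontier of $AH(M)$; and $\rho_0$ is discrete and faithful.

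It remains to verify that $\rho_0$ is amalgam Anosov. Every vertex group of the JSJ splitting is quasiconvex in $\pi_1(M)$ (Theorem~\ref{thm:jsj}(2)), and by Lemma~\ref{lem:RegistersExist} each cyclic vertex group $\Gamma_v$ has quasiconvex registering subgroups; I choose such an $H$ generated by sufficiently high powers $a_0^n,\dots,a_{n_v}^n$ of a generator of $\Gamma_v$ and of fixed elements of the adjacent vertex groups. The standard input is that, because $\rho_0$ is geometrically finite with cusp subgroup $\langle\gamma\rangle$, its restriction to any quasiconvex subgroup $\Lambda\le\pi_1(M)$ is again geometrically finite, and is convex cocompact precisely when no conjugate of $\gamma$ lies in $\Lambda$. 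For a Fuchsian vertex group this holds by the choice of $\gamma$, so $\rho_0|_{\Gamma_v}$ is convex cocompact, i.e.\ Anosov. For the registering subgroup $H$, the ping-pong estimates behind Lemma~\ref{lem:RegistersExist} show that every nontrivial element of $H$ has $\pi_1(M)$-translation length at least $\delta_n$ with $\delta_n\to\infty$; for $n$ large, $\delta_n$ exceeds the translation length of the fixed element $\gamma$, so $H$ contains no conjugate of $\gamma$ and $\rho_0|_H$ is convex cocompact. Hence $\rho_0$ satisfies conditions (1) and (2) of Definition~\ref{aadef}, so $\rho_0\in\mathbb{X}_{\mathrm{A}^2}(\pi_1(M),\PSL_2(\C))\smallsetminus\mathbb{X}_{\mathrm{A}}(\pi_1(M),\PSL_2(\C))$, finishing the proof. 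I expect this last verification to be the main obstacle: one must turn geometric finiteness of $\rho_0$ into genuine convex cocompactness of the pertinent restrictions, which is exactly why $\gamma$ is taken outside the characteristic submanifold (keeping the $I$-bundle vertex groups parabolic-free) and why the registering subgroups are taken generated by high powers (so the accidental parabolic can be avoided there too); everything else is formal.
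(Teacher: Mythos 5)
Your overall architecture matches the paper's: produce one discrete, faithful representation on $\partial AH(M)$ that is amalgam Anosov but not convex cocompact, then use openness of $\mathbb{X}_{\mathrm{A^2}}(\pi_1(M),\PSL_2(\C))$ (Proposition \ref{AareAA}) together with the fact that boundary points of the closed set $AH(M)$ are approximated by indiscrete or unfaithful representations. But the representation you choose is different from the paper's, and your verification that it is amalgam Anosov has a genuine gap. The paper takes $\rho$ to be a \emph{purely hyperbolic} point of $\partial AH(M)$ (such points exist by Lemma 4.2 of Canary--Hersonsky) and then quotes Lemmas 8.2 and 8.3 of Canary--Storm, which say exactly that for such $\rho$ the Fuchsian vertex groups restrict to convex cocompact representations and registering subgroups with convex cocompact image exist. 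Since that $\rho$ has no parabolics whatsoever, all bookkeeping about accidental parabolics in subgroups disappears.

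Your $\rho_0$ has a cusp, and the gap lies in excluding that cusp from the relevant subgroups. A finitely generated $\Lambda$ with $\rho_0|_\Lambda$ geometrically finite is convex cocompact if and only if $\Lambda$ contains no parabolic element, i.e.\ meets no conjugate of $\langle\gamma\rangle$ nontrivially --- not merely ``no conjugate of $\gamma$ lies in $\Lambda$'' as you assert (for instance $\langle\gamma^2\rangle$ contains no conjugate of $\gamma$ yet is parabolic). For the Fuchsian vertex groups you therefore need that no \emph{power} of $\gamma$ is conjugate into an $I$-bundle vertex group, which requires an annulus-theorem/enclosing-property argument going beyond ``$\gamma$ is not homotopic into $\Sigma(M)$.'' More seriously, for the registering subgroup your translation-length estimate only rules out conjugates of $\gamma$ itself: an element of $H_n$ conjugate to $\gamma^k$ for large $k$ also has large translation length, so $\delta_n\to\infty$ excludes nothing. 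The standard repair --- and what Canary--Storm actually do, and what this paper does in the proof of Proposition \ref{notSA} --- is to build the registering subgroup by ping-pong in $\mathbb H^3$ on the fixed-point sets of loxodromic images, so that $\rho_0(H)$ is a classical Schottky group and convex cocompactness is automatic. As written, your argument does not establish that $\rho_0$ is amalgam Anosov, so the construction needs either these repairs or the paper's purely hyperbolic shortcut.
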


\begin{proof}
Lemma \ref{AareAA} implies that $\mathbb{X}_{\mathrm{A}}(\pi_1(M),\PSL_2(\C))=\textrm{int}(AH(M))$ is a subset of $\mathbb{X}_{\mathrm{A}^2}(\pi_1(M),\PSL_2(\C))$.

Lemma 4.2 in Canary--Hersonsky \cite{canary-hersonsky} guarantees that $\partial AH(M)$ contains representations that are purely hyperbolic, i.e., the image of every nontrivial element of $\pi_1(M)$ is a hyperbolic element of $\PSL_2(\C)$. Let \hbox{$\rho\in \partial AH(M)$} be purely hyperbolic. Lemma 8.2 in Canary--Storm \cite{Canary-Storm} implies that the restriction of $\rho$ to any Fuchsian vertex group is Anosov, while Lemma 8.3 in \cite{Canary-Storm} guarantees that for every cyclic vertex group there is a registering subgroup $H$ of $\pi_1(M)$ so that $\rho|_H$ is Anosov. It follows that $\rho$ is amalgam Anosov. Since $\rho\in\partial AH(M)$ it is not Anosov, so 
\[
\rho\in \mathbb{X}_{\mathrm{A}^2}(\pi_1(M),\PSL_2(\C)) \smallsetminus \mathbb{X}_{\mathrm{A}}(\pi_1(M),\PSL_2(\C)).
\]
In particular, $\mathbb{X}_{\mathrm{A}}(\pi_1(M),\PSL_2(\C))$ is a proper subset of $\mathbb{X}_{\mathrm{A}^2}(\pi_1(M),\PSL_2(\C))$.

Since $\rho$ is a smooth point of $\mathbb{X}(\pi_1(M),\PSL_2(\C))$ (see Kapovich \cite[Thm.\ 8.44]{kapovich}), $\rho\in \partial AH(M)\cap \mathbb{X}_{\mathrm{A^2}}(\pi_1(M),\PSL_2(\C))$ and $\mathbb{X}_{\mathrm{A}^2}(\pi_1(M),\PSL_2(\C))$ is open, there exist representations in \hbox{$\mathbb{X}_{\mathrm{A}^2}(\pi_1(M),\PSL_2(\C)) \smallsetminus \mathbb{X}_{\mathrm{A}}(\pi_1(M),\PSL_2(\C))$} that do not lie in $AH(M)$, and hence are not discrete and faithful.
\end{proof}

\noindent
{\bf Remark:} If  the JSJ-splitting of $\pi_1(M)$ has no rigid vertices, e.g., if $M$ is a book of $I$-bundles, 
then the above proof shows that any purely hyperbolic representation in $\partial AH(M)$ is strongly amalgam
Anosov, so $\mathbb X_{\mathrm{A}}(\pi_1(M),\PSL_2(\C))$ is a proper subset of $\mathbb{X}_{\mathrm{SA}}(\pi_1(M),\PSL_2(\C))$.

\medskip

If $\tau : \PSL_2(\C) \to \SL_{15}(\R)$ is the Pl\"ucker representation, then Theorem \ref{CS} and Lemma \ref{Pluckeramalgam} immediately apply to establish the following corollary. (One may easily compute, by examining the proof, that one can take
$d(\PSL_2(\C))=15$ in Proposition \ref{Plucker}.)

\begin{Corollary}
Suppose that $M$ is a compact, hyperbolizable 3-manifold whose boundary is nonempty and has no toroidal components and that $\pi_1(M)$ is one-ended and not a surface group. Then
\[
\mathbb{X}_{\mathrm{A}}(\pi_1(M),\SL_{15}(\R)) \subset \mathbb{X}_{\mathrm{A}^2}(\pi_1(M), \SL_{15}(\R))
\]
is a proper subset. In particular, the set $\mathbb{X}_{\mathrm{A}^2}(\pi_1(M), \SL_{15}(\R))$ contains representations that are not discrete and faithful, hence not Anosov.
\end{Corollary}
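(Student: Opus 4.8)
The plan is to deduce the Corollary directly from Theorem~\ref{CS} by transporting everything through the Pl\"ucker representation $\tau:\PSL_2(\C)\to\SL_{15}(\R)$ supplied by Proposition~\ref{Plucker}. First I would record that $\tau$ induces a continuous (indeed algebraic) map $\tau_*:\mathbb{X}(\pi_1(M),\PSL_2(\C))\to\mathbb{X}(\pi_1(M),\SL_{15}(\R))$ by postcomposition, and that since $\PSL_2(\C)$ has real rank $1$, Lemma~\ref{Pluckeramalgam} tells us that $\tau_*$ carries $\mathbb{X}_{\mathrm{A}^2}(\pi_1(M),\PSL_2(\C))$ into $\mathbb{X}_{\mathrm{A}^2}(\pi_1(M),\SL_{15}(\R))$. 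Likewise, by Proposition~\ref{Plucker} itself, $\tau_*$ carries $\mathbb{X}_{\mathrm{A}}(\pi_1(M),\PSL_2(\C))$ into $\mathbb{X}_{\mathrm{A}}(\pi_1(M),\SL_{15}(\R))$, and conversely a representation $\rho$ with $\tau\circ\rho$ projective Anosov is Anosov; in particular $\tau_*$ reflects the Anosov condition.

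Next I would exhibit the witnessing representation. By Theorem~\ref{CS} there is a purely hyperbolic $\rho\in\partial AH(M)$ which is amalgam Anosov but not Anosov. Then $\tau\circ\rho$ lies in $\mathbb{X}_{\mathrm{A}^2}(\pi_1(M),\SL_{15}(\R))$ by the previous paragraph. I claim $\tau\circ\rho$ is not Anosov: if it were projective Anosov, then by Proposition~\ref{Plucker} $\rho$ would be $(P^+,P^-)$-Anosov for the unique conjugacy class of opposite parabolics in $\PSL_2(\C)$, contradicting $\rho\notin\mathbb{X}_{\mathrm{A}}(\pi_1(M),\PSL_2(\C))$. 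Hence $\tau\circ\rho$ lies in $\mathbb{X}_{\mathrm{A}^2}(\pi_1(M),\SL_{15}(\R))\smallsetminus\mathbb{X}_{\mathrm{A}}(\pi_1(M),\SL_{15}(\R))$, so the inclusion is proper. One subtlety: Proposition~\ref{Plucker} is stated for general semisimple $G$ and its Pl\"ucker representation could in principle be only $(P^+,P^-)$-detecting rather than Anosov-detecting; but it is precisely the content of the proposition that $\rho$ is $(P^+,P^-)$-Anosov iff $\tau\circ\rho$ is projective Anosov, so no extra work is needed. The only genuinely computational point is the claim $d(\PSL_2(\C))=15$, which is asserted parenthetically in the statement as following from an examination of the proof of Proposition~\ref{Plucker}, and which I would simply invoke.

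Finally, for the last assertion, I would take a purely hyperbolic $\rho\in\partial AH(M)$ as above and appeal to Theorem~\ref{CS}, which provides representations in $\mathbb{X}_{\mathrm{A}^2}(\pi_1(M),\PSL_2(\C))\smallsetminus\mathbb{X}_{\mathrm{A}}(\pi_1(M),\PSL_2(\C))$ that are not discrete and faithful; call such a representation $\rho'$. Its image $\tau\circ\rho'$ lies in $\mathbb{X}_{\mathrm{A}^2}(\pi_1(M),\SL_{15}(\R))$, and it is not discrete and faithful since $\tau$ is injective with closed image (as $\PSL_2(\C)$ embeds in $\SL_{15}(\R)$ via an irreducible, hence proper, representation), so a non-discrete or non-injective $\rho'$ has non-discrete or non-injective $\tau\circ\rho'$. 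By the Anosov-reflecting property of $\tau_*$ noted above, $\tau\circ\rho'$ is not Anosov either. This completes the proof.

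I expect the only real obstacle to be bookkeeping: making sure the implications through $\tau$ go in the direction needed (Proposition~\ref{Plucker} is an ``if and only if'', so this is painless) and confirming that non-discreteness and non-faithfulness are preserved under an irreducible embedding of Lie groups. Everything else is a direct translation of Theorem~\ref{CS} and Lemma~\ref{Pluckeramalgam}.
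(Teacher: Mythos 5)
Your proposal follows the paper's route exactly: Theorem~\ref{CS} supplies the witnesses in $\mathbb{X}(\pi_1(M),\PSL_2(\C))$ and Lemma~\ref{Pluckeramalgam} (with the rank-one remark) pushes them forward through $\tau$; the paper states the corollary as an immediate consequence of precisely these two results. One step of your second paragraph is over-claimed, though it does not sink the proof. Proposition~\ref{Plucker} is an ``iff'' between $\rho$ being $(P^+,P^-)$-Anosov and $\tau\circ\rho$ being \emph{projective} Anosov; it therefore only tells you that $\tau\circ\rho$ fails to be projective Anosov, not that $\tau\circ\rho\notin\mathbb{X}_{\mathrm{A}}(\pi_1(M),\SL_{15}(\R))$, since the latter set allows Anosov with respect to \emph{any} pair of opposite parabolic subgroups of $\SL_{15}(\R)$. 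For your first witness (the purely hyperbolic, discrete and faithful $\rho\in\partial AH(M)$) this leaves a real gap: ruling out all other parabolic pairs would require an extra argument (e.g.\ that $\tau$ is undistorted so $\tau\circ\rho$ cannot be a quasi-isometric embedding). Fortunately your final paragraph already closes the argument the way the corollary's own statement does: the non-discrete or non-faithful $\rho'$ from Theorem~\ref{CS} yields $\tau\circ\rho'\in\mathbb{X}_{\mathrm{A}^2}(\pi_1(M),\SL_{15}(\R))$ that is not discrete and faithful, hence not Anosov with respect to any parabolic pair by Theorem~\ref{thm:Guichard-Wienhard}(2), and that alone gives properness of the inclusion. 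I would simply drop the claim that the discrete faithful witness is non-Anosov in $\SL_{15}(\R)$, or justify it separately.
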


\subsection{Rank 1 Lie groups}\label{rankone}

For every noncompact rank one Lie group $G$, one can construct a nonrigid hyperbolic group $\Gamma$ such that $\Xa$ is empty 
and $\Xsa$ has positive dimension.

\begin{Proposition}\label{thm:spn1examples}
If $G$ is a noncompact, connected, rank one, simple Lie group with finite center, then
there exists a one-ended, torsion-free hyperbolic group $\Gamma$ such that
\begin{enumerate}

\item 
$\Xa = \emptyset$, 

\item
$\Out(\Gamma)$ is infinite, and

\item 
$\Xsa$ has positive dimension.

\end{enumerate}
Moreover, there is a continuous family $\{\rho_z\}$ of distinct representations in $\Xsa$ each of which is either indiscrete or not faithful.
\end{Proposition}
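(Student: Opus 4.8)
The plan is to build $\Gamma$ explicitly as a graph of groups designed to have empty Anosov locus, infinite outer automorphism group, and a Fuchsian or rigid vertex into which one can insert nontrivial deformations. First I would choose a convex cocompact subgroup $\Lambda < G$ which is isomorphic to a surface group with boundary (or a free group), so that $\mathbb{X}_{\mathrm A}(\Lambda, G)$ is nonempty and positive-dimensional; such $\Lambda$ exists in any noncompact rank one $G$ by standard Schottky/quasifuchsian constructions, and the convex cocompact deformations of $\Lambda$ form a positive-dimensional family since convex cocompactness is an open condition and one can bend along a simple closed curve. Then I would take $\Gamma$ to be an amalgam (or a group obtained by gluing two such Fuchsian pieces along an infinite cyclic edge group, plus possibly an extra cyclic vertex so that $\mathcal T$ is nontrivial) chosen so that $\Gamma$ is one-ended, torsion-free, hyperbolic, and its JSJ splitting has only Fuchsian and cyclic vertices with a prescribed Fuchsian vertex carrying $\Lambda$.

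The second step is to arrange that $\Xa = \emptyset$. The standard way to do this is to make $\Gamma$ contain a subgroup that cannot be quasi-isometrically embedded in $G$: for instance, if $G$ has rank one, then any Anosov (equivalently convex cocompact) representation is a quasi-isometric embedding, so it suffices to include in $\Gamma$ a finitely generated subgroup $Q$ which is \emph{not} word hyperbolic, e.g., a Baumslag--Solitar-type relation or a $\mathbb Z^2$ — but $\Gamma$ must itself be hyperbolic, so instead one uses the subtler device already standard in the Canary--Storm/Lee examples: build $\Gamma$ so that every cyclic vertex group $\Gamma_v$ is \emph{not} contained (up to finite index) in any quasiconvex free subgroup on which the representation could be Anosov while the whole representation is Anosov — more simply, arrange a rigid or Fuchsian vertex group $R$ admitting no Anosov representation into $G$ at all, which forces $\Xa = \emptyset$ by Lemma \ref{lem:qconvexAnosov} (since vertex groups are quasiconvex, an Anosov $\rho$ would restrict to an Anosov representation of $R$). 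For rank one $G$ one can take $R$ to be a group of cohomological dimension exceeding that of any convex cocompact subgroup of $G$, or more concretely a uniform lattice in a rank one Lie group $G'$ not embeddable in $G$; since uniform lattices have property that their only convex cocompact (= all) representations are essentially rigid when $G'\ne G$, and in particular none into $G$ of the right kind — here I would lean on the concrete constructions the authors surely have in mind (e.g.\ amalgams of surface groups over $\mathbb Z$ whose JSJ has a specific vertex structure), rather than reproving embeddability obstructions.

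The third step produces the continuous family $\{\rho_z\}$ in $\Xsa$. I would start from a strongly amalgam Anosov representation $\rho_0$ — one exists by gluing: define $\rho_0$ on each Fuchsian/rigid vertex group to be a fixed $(P^+,P^-)$-Anosov representation for a common pair $P^\pm$, extend over the cyclic edge and vertex groups by choosing the images of edge generators to be any elements whose eigenvalue data is compatible, which is possible because the Anosov limit maps are flexible enough (this is exactly the content being proved in the appendix, Theorem \ref{main-appendix}, that such glued representations are amalgam Anosov). Then bend: pick a Fuchsian vertex group $\Gamma_v$ carrying the surface group $\Lambda$ and a simple closed curve $c$ on $F_v$, and deform $\rho_0|_{\Gamma_v}$ by the standard bending construction along $c$, which gives a one-parameter family $\rho_z|_{\Gamma_v}$ of Anosov representations with the same restriction to $\partial F_v$; since the peripheral data is unchanged, these extend to representations $\rho_z$ of all of $\Gamma$ agreeing with $\rho_0$ off $\Gamma_v$, and each $\rho_z$ is still strongly amalgam Anosov because $\rho_z|_{\Gamma_w}$ is Anosov for every Fuchsian/rigid $w$. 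Finally, to see the $\rho_z$ are pairwise distinct and each is indiscrete or unfaithful: distinctness follows from tracking the trace/eigenvalue of $\rho_z(c)$ or of some element crossing $c$, which varies nonconstantly under bending; and indiscreteness-or-unfaithfulness is automatic once $\Xa = \emptyset$, since a discrete faithful $\rho_z$ that is strongly amalgam Anosov would, if it were actually convex cocompact, lie in $\Xa$ — but here one needs the cleaner statement that $\Xsa \cap (\text{discrete faithful}) \subseteq \Xa$ fails in general, so instead I would argue directly that the bent representation, having the same character on a large subgroup but a group-theoretically incompatible relation across the edge, cannot be discrete and faithful (a Chuckrow--Jorgensen-type argument, or simply noting that for generic $z$ the images of $\Gamma_v$ and a neighboring vertex group generate an indiscrete subgroup).

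The main obstacle I expect is step three's last point: arranging that \emph{every} member of the positive-dimensional family is indiscrete or unfaithful, rather than just generically so. The bending deformation naturally produces discrete faithful representations for small $z$ (bending quasifuchsian groups stays quasifuchsian near $z=0$), so the family must be chosen to avoid the discrete faithful locus entirely — this likely requires starting with $\rho_0$ already \emph{on the boundary} of the discrete faithful set, or choosing the edge-gluing data so incompatibly that $\rho_0$ itself (and hence a whole neighborhood) is indiscrete while remaining strongly amalgam Anosov. Reconciling "strongly amalgam Anosov" (an open, robust condition) with "never discrete and faithful" (which for rank one forces one off the convex cocompact locus, but that locus can have complicated boundary) is exactly where the construction must be done with care, and is presumably why the authors restrict to a specially built $\Gamma$ with $\Xa=\emptyset$: once $\Xa=\emptyset$, strong amalgam Anosov representations live in a set disjoint from interior-of-$AH$, and one shows the relevant component of $\Xsa$ misses $AH$ altogether by a connectedness and limiting argument.
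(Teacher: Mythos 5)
There is a genuine gap, and it sits exactly where you placed your doubts. Your proposed mechanism for forcing $\Xa=\emptyset$ --- building in ``a rigid or Fuchsian vertex group $R$ admitting no Anosov representation into $G$ at all'' --- is self-defeating: by Definition \ref{saadef}, a strongly amalgam Anosov representation must restrict to an Anosov representation on \emph{every} rigid or Fuchsian vertex group, so such an $R$ would force $\Xsa=\emptyset$ as well, killing conclusion (3). The paper's mechanism is different and is the one idea your proposal is missing: take $\Lambda$ to be a torsion-free \emph{cocompact lattice} in $G$ and let $\Gamma$ be the double of $\Lambda$ along a maximal cyclic subgroup $C$ (chosen to be filling when $G$ is locally $\PSL_2(\R)$, so the vertex groups are rigid). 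Each vertex group individually has plenty of Anosov representations --- the lattice embedding itself is convex cocompact --- but $\Gamma$ as a whole admits \emph{no} discrete faithful representation into $G$: since $H_n(\Lambda,\Z)\cong\Z$ with $n=\dim X$, a discrete faithful image of $\Lambda_1$ would act cocompactly on $X$, which is incompatible with $\rho(\Lambda_1)$ having infinite index in the discrete group $\rho(\Gamma)$. This gives $\Xa=\emptyset$ (Anosov representations are discrete and faithful) without touching $\Xsa$.

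This same device dissolves the obstacle you flagged at the end: you worried about arranging that \emph{every} member of the family is indiscrete or unfaithful, since bending deformations of a discrete faithful representation stay discrete and faithful nearby. In the paper there is nothing to arrange --- no representation of $\Gamma$ into $G$ is discrete and faithful, so the claim is automatic for any family whatsoever. Finally, the positive-dimensional family is produced far more simply than by bending inside a Fuchsian vertex: set $\rho_z$ to be the identity on $\Lambda_1$ and conjugation by $z$ on $\Lambda_2$, where $z$ ranges over the centralizer $Z$ of $C$ in $G$; this $Z$ has positive dimension (it contains the Zariski closure of $C$), each $\rho_z|_{\Lambda_i}$ is a lattice embedding and hence Anosov, so each $\rho_z$ is strongly amalgam Anosov. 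Your bending approach could plausibly be made to work for the family in some examples, but without the lattice-doubling idea you have neither $\Xa=\emptyset$ nor the ``indiscrete or unfaithful'' conclusion, and your fallback for $\Xa=\emptyset$ contradicts the statement you are trying to prove.
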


\begin{proof}
We first suppose that $G$ is not locally isomorphic to $\PSL_2(\R)$. Let $\Lambda$ be a torsion-free cocompact lattice in $G$. Then $\Lambda$ is a rigid one-ended hyperbolic group. Our basic construction is to double $\Lambda$. Let $C = \langle \sigma \rangle$ be a maximal infinite cyclic subgroup of $\Lambda$ and let $\Gamma$ be the double of $\Lambda$ along $C$. By definition, $\Gamma$ is then a graph of groups with two rigid vertex groups $\Lambda_1$ and $ \Lambda_2$ on either side of a single infinite cyclic vertex group. By the Bestvina--Feighn Combination Theorem (see the first corollary on page 100 of \cite{Bestvina-Feighn}), $\Gamma$ is again a torsion-free hyperbolic group. The JSJ splitting for $\Gamma$ has 3 vertices, one cyclic vertex between the two rigid vertices with group $\Lambda_i \cong \Lambda$ ($i = 1, 2$).
\begin{center}
\begin{tikzpicture}
[every node/.style={circle, fill=red!20}]
\node (n1) at (1,1) {$\Lambda_1$};
\node (n2) at (5,1) {$\Z$};
\node (n3) at (9,1) {$\Lambda_2$};

\foreach \from/\to in {n1/n2, n2/n3} \draw (\from) -- (\to);

\end{tikzpicture}
\begin{tikzpicture}
\node (n1) at (1, 1) {$\langle \sigma \rangle$};
\node (n2) at (5, 1) {$\langle t \rangle$};
\node (n3) at (9, 1) {$\langle \sigma \rangle$};

\foreach \from/\to in {n1/n2} \draw[<-] (\from) -- (\to);
\foreach \from/\to in {n2/n3} \draw[->] (\from) -- (\to);

\end{tikzpicture}
\end{center}

We first observe that no representation in $\X$ is discrete and faithful. Since $\Lambda$ is a cocompact lattice, $H_n(\Lambda, \Z) \cong \Z$ where $n$ is the dimension of the symmetric space $X$ associated with $G$. If $\rho \in \X$ is discrete and faithful, then, since
\[
H_n(\rho(\Lambda_1), \Z) \cong H_n(\Lambda, \Z) \cong \Z,
\]
we see that $X/\rho(\Lambda_1)$ is compact. However, this is impossible since $\rho(\Gamma)$ is discrete and 
$\rho(\Lambda_1)$ has infinite index in $\rho(\Gamma)$. Notice that this implies, in particular, that $\Xa$ is empty, since every Anosov representation of a torsion-free hyperbolic group is discrete and faithful.

We next construct a family of amalgam Anosov representations of $\Gamma$. Let $Z$ be the centralizer of $C \subset \Lambda$ in $G$. Then $Z$ contains the Zariski closure of the diagonalizable group $C$, and thus has positive dimension. Given $z\in Z$, we define $\rho_z\in\X$ to be the identity on $\Lambda_1$ and to take every $\gamma\in\Lambda_2$ to $z \gamma z^{-1}$. The restriction of $\rho_z$ to each $\Lambda_i$ is a discrete, faithful representation with image a lattice. Thus $\rho_z|_{\Lambda_i}$ is convex cocompact and hence Anosov. (Recall that a discrete, faithful representation into a rank 1 Lie group is Anosov if and only if it is convex cocompact; see Guichard--Wienhard \cite[Thm.\ 5.15]{Guichard-Wienhard}.) Therefore, $\rho_z$ is strongly amalgam Anosov for all $z\in Z$. Since the group $Z$ has positive dimension, this produces a positive dimensional subset of
$\Xsa$.

If $G$ is locally isomorphic to $\PSL_2(\R)$, we must choose $C$ carefully to carry out the above argument.
Let $\Lambda$ be a torsion-free, cocompact lattice in $G$, so
$S= \mathbb{H}^2 / \Lambda$ is a closed orientable surface, and let $C$ be a maximal infinite cyclic subgroup of $\Lambda$.
We again form  $\Gamma$ by doubling $\Lambda$ along $C$. Then $\Gamma$ has a $\Z$-splitting connecting two vertex groups, 
each isomorphic to $\Lambda$, and one cyclic vertex group associated with $C$. 
If a generator of $C$ is represented by
a simple closed curve $\alpha$ on $S$, then the noncyclic vertex groups of this splitting will not be rigid (as one will obtain a further
$\Z$-splitting associated with a simple closed curve disjoint from $\alpha$).
However, if a generator of  $C$ is represented by a filling curve $\alpha$ on $S$, i.e., a curve that essentially intersects every homotopically nontrivial simple closed curve and we again form $\Gamma$ by doubling $\Lambda$ along $C$, then the noncyclic vertex groups will be
rigid, since any $\mathbb{Z}$-splitting of $\Lambda$ arises from a simple closed curve on $S$. Since this simple closed
curve must intersect $\alpha$, the edge group associated to $C$ cannot lie in a vertex group of the splitting.
With this choice of $\Lambda$ and $C$, our splitting is a JSJ splitting and we can complete the proof as above.
\end{proof}

If $\tau_G:G\to \SL_{d(G)}(\R)$ is the Pl\"ucker representation given by Proposition \ref{Plucker}, then Lemma \ref{Pluckeramalgam} implies that each $\tau_G\circ \rho_z$ is amalgam Anosov but not discrete and faithful, hence not Anosov. Therefore we obtain the following immediate corollary.

\begin{Corollary}
There exist infinitely many distinct values of $d$ so that there exists a torsion-free one-ended hyperbolic group $\Gamma_d$ 
such that $\Out(\Gamma_d)$ is infinite and
\[
\mathbb{X}_{\mathrm{A}}(\Gamma_d,\SL_d(\R))\ne \mathbb{X}_{\mathrm{A}^2}(\Gamma_d,\SL_d(\R)).
\]
\end{Corollary}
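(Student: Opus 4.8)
The plan is to route the representations constructed in Proposition \ref{thm:spn1examples} through a Pl\"ucker representation and to combine Lemma \ref{Pluckeramalgam} with part (2) of Theorem \ref{thm:Guichard-Wienhard}. Fix a noncompact, connected, rank one, simple Lie group $G$ with finite center, and let $\Gamma=\Gamma_G$ be the one-ended torsion-free hyperbolic group produced by Proposition \ref{thm:spn1examples}: then $\Out(\Gamma)$ is infinite, and there is a continuous family $\{\rho_z\}$ of distinct strongly amalgam Anosov representations $\rho_z\colon\Gamma\to G$, each of which is either indiscrete or non-faithful. Let $\tau_G\colon G\to\SL_{d(G)}(\R)$ be the Pl\"ucker representation of Proposition \ref{Plucker}.

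Because $G$ has real rank $1$, Lemma \ref{Pluckeramalgam} applies to each $\rho_z$ (which is strongly amalgam Anosov, hence amalgam Anosov), so that $\tau_G\circ\rho_z\in\mathbb{X}_{\mathrm{A}^2}(\Gamma,\SL_{d(G)}(\R))$. I then claim that none of these compositions is Anosov: if $\tau_G\circ\rho_z$ were Anosov, Theorem \ref{thm:Guichard-Wienhard}(2) would force it to be discrete and faithful, but $\tau_G$ is a Lie group homomorphism with finite central kernel, hence a local diffeomorphism near the identity, so $\rho_z$ itself would then be discrete and faithful, contradicting the construction of $\{\rho_z\}$. Together with the inclusion from Proposition \ref{AareAA}, each $\tau_G\circ\rho_z$ exhibits
\[
\mathbb{X}_{\mathrm{A}}(\Gamma,\SL_{d(G)}(\R))\ \subsetneq\ \mathbb{X}_{\mathrm{A}^2}(\Gamma,\SL_{d(G)}(\R)).
\]

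To obtain infinitely many values of $d$, I would let $G$ run over an infinite family of rank one simple Lie groups, for instance $\mathrm{SO}(n,1)$ for $n\to\infty$. Since the Pl\"ucker representation $\tau_G$ is nontrivial and irreducible, $d(G)$ is at least the dimension of the smallest nontrivial real representation of $G$, which grows without bound along this family (e.g.\ it is bounded below by $n+1$, the dimension of the standard representation of $\mathrm{SO}(n,1)$); hence $d(G)$ takes infinitely many distinct values, and setting $\Gamma_d=\Gamma_G$ for these $d=d(G)$ completes the argument.

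This corollary is essentially an assembly of earlier results, so there is no deep obstacle. The one point that needs a little care is transferring the failure of discreteness and faithfulness from $\rho_z$ to $\tau_G\circ\rho_z$, which is handled by the finiteness of $\ker\tau_G$; the only genuinely computational input is verifying that $d(G)$ is unbounded over some infinite family of rank one groups, which follows from the explicit nature of these Pl\"ucker dimensions (as illustrated by the value $d(\PSL_2(\C))=15$ noted just before the corollary) and their evident growth with the size of $G$.
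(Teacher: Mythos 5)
Your proposal is correct and follows essentially the same route as the paper, which likewise composes the representations $\rho_z$ of Proposition \ref{thm:spn1examples} with the Pl\"ucker representation and applies Lemma \ref{Pluckeramalgam} together with the failure of discreteness and faithfulness to rule out Anosov. Your explicit justification that $d(G)$ is unbounded over the family $\mathrm{SO}(n,1)$, and your use of the finiteness of $\ker\tau_G$ to transfer non-discreteness/non-faithfulness, merely fill in details the paper leaves implicit.
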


\subsection{Amalgam Anosov representations that are not strongly amalgam Anosov}
\label{aanotsa}

In this section we modify the construction in Theorem \ref{thm:spn1examples} to  build amalgam Anosov representations of nonrigid groups that are not strongly amalgam Anosov. 

\begin{Proposition}
\label{notSA}
There exist infinitely many one-ended, torsion-free hyperbolic groups $\Gamma$ such that $\Out(\Gamma)$ is infinite and
there exists an amalgam Anosov representation $\rho:\Gamma\to\PSL (2,\C)$ that is not strongly amalgam Anosov.\end{Proposition}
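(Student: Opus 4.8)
The plan is to take a rigid one-ended hyperbolic group whose outer automorphism group is finite but which admits a representation into $\PSL_2(\C)$ that is Anosov with respect to one pair of opposite parabolics but not with respect to a ``uniform'' choice, and then double it along a carefully chosen maximal cyclic subgroup, exactly as in the proof of Proposition \ref{thm:spn1examples}. The point of the doubling is that in the resulting JSJ splitting the two noncyclic vertex groups are rigid copies $\Lambda_1,\Lambda_2$ of $\Lambda$ joined through a single cyclic vertex group $C=\langle\sigma\rangle$, and $\Out(\Gamma)$ is infinite (it contains the characteristic twist group $\mathcal T_C$), so $\Gamma$ is genuinely nonrigid. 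One then builds $\rho:\Gamma\to\PSL_2(\C)$ so that $\rho|_{\Lambda_1}$ is Anosov with respect to $P^+$ and $\rho|_{\Lambda_2}$ is Anosov but the only pair of opposite parabolic subgroups witnessing this is a \emph{different} conjugate; since $\PSL_2(\C)$ has real rank one and hence only one conjugacy class of pairs of opposite parabolics, one must instead arrange that $\rho|_{\Lambda_1}$ and $\rho|_{\Lambda_2}$ are individually Anosov (hence convex cocompact) but that \emph{no single} $\rho|_{\Gamma_w}$ for a rigid vertex $w$ can be used — i.e.\ exploit that strong amalgam Anosov in rank one would force, by Lemma \ref{lem:qconvexAnosov} applied to the ambient group if $\rho$ were globally Anosov, more than is available.

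More concretely, I would proceed as follows. First, recall from the proof of Proposition \ref{thm:spn1examples} the mechanism: choose $\Lambda$ a torsion-free cocompact lattice in $\PSL_2(\C)$ (so $\Lambda$ is rigid by Mostow rigidity and one-ended), pick a maximal cyclic $C=\langle\sigma\rangle\subset\Lambda$ represented by a suitably generic (e.g.\ non-peripheral, ``filling'' in the appropriate sense) loxodromic element, and let $\Gamma$ be the double of $\Lambda$ along $C$; by Bestvina--Feighn $\Gamma$ is hyperbolic, and its JSJ splitting is $\Lambda_1 - C - \Lambda_2$ with $\Out(\Gamma)$ infinite. Second, let $Z$ be the centralizer in $\PSL_2(\C)$ of $C$; since $\rho_0|_{C}$ is loxodromic, $Z$ is a one-parameter (in fact two-real-dimensional, as $Z\cong\C^*$) subgroup, and for $z\in Z$ define $\rho_z$ to be the identity on $\Lambda_1$ and conjugation by $z$ on $\Lambda_2$. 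Each $\rho_z|_{\Lambda_i}$ is a lattice embedding, hence convex cocompact, hence Anosov; thus each $\rho_z$ is strongly amalgam Anosov. To instead get a representation that is amalgam Anosov but \emph{not} strongly amalgam Anosov, I would bend one side: replace the identity inclusion of $\Lambda_1$ by a non-discrete or non-faithful deformation $\rho':\Lambda_1\to\PSL_2(\C)$ that nonetheless remains Anosov \emph{on the Fuchsian and cyclic vertex subgroups of $\Gamma$ that are relevant to amalgam Anosov} but destroys Anosov-ness of $\rho'|_{\Lambda_1}$ as a rigid vertex group. Here is the subtlety to be handled: in the double along $C$, there are no Fuchsian vertices, so condition (1) of Definition \ref{aadef} is vacuous, and condition (2) only requires, for the single cyclic vertex $C$, the existence of \emph{one} registering free subgroup $H=\langle a_0,a_1,a_2\rangle$ (with $a_0\in C$, $a_i\in\Lambda_i$) on which $\rho$ is Anosov. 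So I would choose the deformation of $\Lambda_1$ small enough that the free group $H$ (a Schottky-type subgroup produced by a ping-pong argument, as in Lemma \ref{lem:RegistersExist}) stays Anosov — convex cocompact free groups are open in their representation variety by Theorem \ref{thm:Guichard-Wienhard}(1) — yet large enough that $\rho|_{\Lambda_1}$ is no longer discrete and faithful, hence (being a surface-or-lattice-type group) no longer Anosov. Then $\rho$ satisfies Definition \ref{aadef} but fails Definition \ref{saadef}, since for strong amalgam Anosov one would need $\rho|_{\Lambda_1}$ itself to be Anosov with respect to the (unique) pair of parabolics in $\PSL_2(\C)$.

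The key steps in order: (i) fix the group-theoretic setup (rigid lattice $\Lambda$, good choice of $C$, double $\Gamma$, identify the JSJ splitting and confirm $\Out(\Gamma)$ infinite, citing Bestvina--Feighn and the proof of Proposition \ref{thm:spn1examples}); (ii) fix the reference representation $\rho_0$ which is the two lattice inclusions glued trivially, hence strongly amalgam Anosov, and fix a concrete registering subgroup $H\subset\Gamma$ with $\rho_0|_H$ convex cocompact (Lemma \ref{lem:RegistersExist}); (iii) produce a deformation $\rho$ of $\rho_0$, supported on $\Lambda_1$, that stays inside $r_H^{-1}(\mathbb X_{\mathrm A}(H,G))$ — possible by openness — but moves $\rho|_{\Lambda_1}$ off $AH(\Lambda_1)$, e.g.\ by a small bending along an embedded incompressible surface in the manifold $\mathbb H^3/\Lambda$ transverse to the geodesic for $\sigma$, or by any deformation killing discreteness; (iv) conclude $\rho$ is amalgam Anosov (Definition \ref{aadef}: vacuous Fuchsian condition, and the cyclic condition met by $H$) but not strongly amalgam Anosov (Definition \ref{saadef} would require $\rho|_{\Lambda_1}$ Anosov, impossible once it is indiscrete); (v) note the construction depends only on the choice of lattice $\Lambda$, of which there are infinitely many non-commensurable ones, giving infinitely many such $\Gamma$.

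The main obstacle I anticipate is step (iii): one must simultaneously \emph{keep} a registering free subgroup $H$ Anosov while \emph{breaking} Anosov-ness of the rigid vertex group $\Lambda_1$. This requires a deformation that is ``large'' on $\Lambda_1$ as a whole but ``small'' on the specific generators $a_0,a_1,a_2$ of $H$ — or, more robustly, using the freedom in the definition of amalgam Anosov to pick a \emph{new} registering subgroup $H'$ adapted to $\rho$ after the deformation rather than insisting the original $H$ survive. The cleanest route is probably: first deform $\rho_0|_{\Lambda_1}$ to an indiscrete but still Anosov-\emph{on-cyclic-and-Schottky-subgroups} representation using that loxodromic-generated free subgroups are an open condition and are preserved under small deformations, then invoke Lemma \ref{lem:RegistersExist}-style ping-pong \emph{for the deformed representation} to exhibit a registering $H'$ with $\rho|_{H'}$ Anosov; the existence of such $H'$ only needs the images of $\sigma$ and of suitable elements of $\Lambda_i$ to be loxodromic with pairwise distinct fixed points, which is generic and stable under the deformation. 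Verifying that the deformed $\rho|_{\Lambda_1}$ is genuinely not Anosov then reduces to the standard fact (Theorem \ref{thm:Guichard-Wienhard}(2)) that Anosov representations of torsion-free hyperbolic groups are discrete and faithful, so any indiscrete deformation does the job. I expect the bookkeeping around which vertex groups are rigid versus Fuchsian in the double to be routine given the analysis already carried out in Proposition \ref{thm:spn1examples}.
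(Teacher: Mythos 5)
Your overall strategy --- double a rigid one-ended hyperbolic group along a maximal cyclic subgroup, arrange that the restriction of $\rho$ to a rigid vertex group fails to be Anosov while some registering free subgroup for the cyclic vertex remains Anosov --- is exactly the architecture of the paper's proof. But your choice of rigid vertex group breaks the construction at precisely the step you flagged as the main obstacle. You take $\Lambda$ to be a cocompact lattice in $\PSL_2(\C)$ and propose in step (iii) to ``deform $\rho_0|_{\Lambda_1}$ off $AH(\Lambda_1)$, e.g.\ by a small bending along an embedded incompressible surface.'' By Calabi--Weil local rigidity, a cocompact lattice in $\PSL_2(\C)$ satisfies $H^1(\Lambda,\mathfrak{sl}_2(\C))=0$, so the inclusion representation is an isolated point of $\mathbb{X}(\Lambda,\PSL_2(\C))$ up to conjugacy: there are \emph{no} nontrivial small deformations, bending or otherwise, and hence no way to perturb $\rho_0|_{\Lambda_1}$ to an indiscrete representation while staying close to $\rho_0$. (Bending a closed hyperbolic $3$-manifold group along a totally geodesic surface produces deformations into $\PO(4,1)$, not within $\PSL_2(\C)$.) Moreover, by Mostow rigidity every discrete faithful representation of $\Lambda$ is a lattice embedding, hence convex cocompact, hence Anosov; so to defeat strong amalgam Anosov-ness with a cocompact lattice you would have to jump to a far-away, indiscrete or non-faithful representation with no continuity argument available to control a registering subgroup.

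The paper avoids this by choosing the rigid vertex group to be $\pi_1(M)$ for $M$ a compact, \emph{acylindrical}, hyperbolizable $3$-manifold with nonempty boundary: acylindricity makes $\pi_1(M)$ rigid as a hyperbolic group (no $\Z$-splitting), yet $AH(M)$ is a positive-dimensional deformation space whose boundary contains discrete, faithful, \emph{purely hyperbolic} representations $\sigma$ that are not convex cocompact, hence not Anosov (Theorem \ref{CS}, via Canary--Hersonsky). Doubling $\pi_1(M)$ along a maximal cyclic subgroup $C$ and setting $\rho=\sigma$ on both copies immediately kills strong amalgam Anosov-ness, while the purely hyperbolic discreteness of $\sigma$ lets a ping-pong argument on high powers of $\sigma(g_1),\sigma(g_2),\sigma(c)$ produce a convex cocompact (Schottky) registering subgroup, so $\rho$ is amalgam Anosov. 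To repair your proposal you should replace the cocompact lattice by such an $M$ and replace the deformation argument by the direct choice of a boundary point of $AH(M)$; as written, step (iii) has no valid instance.
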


\begin{proof}
Let $M$ be a compact, acylindrical, hyperbolizable 3-manifold and let $\sigma:\pi_1(M)\to \PSL_2(\C)$ be a discrete, faithful, purely hyperbolic representation that is not Anosov (see Theorem \ref{CS}). Let $C$ be a maximal cyclic subgroup of $\pi_1(M)$ and let $\Gamma$ be the double of $\pi_1(M)$ along $C$. Again $\Gamma$ is hyperbolic and its JSJ splitting has two rigid vertices, each isomorphic to $\pi_1(M)$, and a single cyclic vertex group associated to $C$. We construct a representation $\rho$ by letting $\rho$ agree with $\sigma$ on each copy of $\pi_1(M)$ in $\Gamma$. Then, by construction, $\rho$ is not strongly amalgam Anosov.

On the other hand, we can choose elements $g_1$ and $g_2$ of $\pi_1(M)$ so that the fixed points of $\rho(g_1)$, $\rho(g_2)$ and $\rho(c)$ in $\mathbb{CP}^1=\partial_\infty \mathbb{H}^3$ are all disjoint, where $c$ is a generator of $C$. Therefore, again by a ping-pong argument, there exists $N$ so that the group generated by $\sigma(g_1^n)$, $\sigma(g_2^n)$ and $\sigma(c^n)$ is a Schottky group for $n\ge N$. In particular it is a convex cocompact free group of rank 3. Let $\gamma_1\in\Gamma$ be the copy of $g_1$ in the first copy of $\pi_1(M)$ and let $\gamma_2\in\Gamma$ be the copy of $g_2$ in the second copy of $\pi_1(M)$. Then, if $n\ge N$, the group generated by 
$\rho(\gamma_1^n)$, $\rho(\gamma_2^n)$ and $\rho(c^n)$ is free of rank 3 and convex cocompact, so the subgroup $H$ of $\Gamma$ generated by $\gamma_1^n$, $\gamma_2^n$ and $c^n$ registers $\Gamma_v$, where $v$ is the cyclic vertex of the JSJ decomposition of $\Gamma$, and $\rho|_H$ is Anosov. Therefore, $\rho$ is amalgam Anosov.

Since there are infinitely many choices for $M$, there are infinitely many choices for $\Gamma$.
\end{proof}

\begin{Remark}
In Propositions \ref{thm:spn1examples} and \ref{notSA}, one can construct examples where $\Out(\Gamma)$ is not virtually abelian. For example, if $G=\PSL_2(\C)$, one may choose $\Gamma$ to be the amalgamation of either a lattice $\Lambda$ or the fundamental group of a compact acylindrical 3-manifold $M$ with $\pi_1(F)$, where $F$ is a compact surface of genus at least one with connected boundary and one identifies a maximal cyclic subgroup of $\Lambda$ or $\pi_1(M)$ with a maximal cyclic group in the conjugacy class of $\pi_1(\partial F)$.
\end{Remark}

\subsection{$\Xaa$ need not be a maximal domain of discontinuity.}
\label{notmax}

We now exhibit examples where $\Xaa$ is not a maximal domain of discontinuity for the action of $\Out(\Gamma)$ on $\X$.

\begin{Proposition}
\label{Xaanotmaximal}
There exists a compact, hyperbolizable 3-manifold $M$
such that
\begin{enumerate}
\item
$\pi_1(M)$ is one-ended, torsion-free and hyperbolic,
\item 
$\Out(\pi_1(M))$ is not virtually abelian, and
\item 
there exists an open, $\Out(\Gamma)$-invariant subset $Y(M)$ of $\mathbb{X}(\pi_1(M),\PSL_2(\C))$ 
so that $\mathbb{X}_{\mathrm{A}^2}(\pi_1(M),\PSL_2(\C))$ is a proper subset of $Y(M)$ and
$\Out(\pi_1(M))$ acts properly discontinuously on $Y(M)$.
\end{enumerate}
\end{Proposition}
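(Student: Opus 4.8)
```latex
\textbf{Proof proposal.}
The plan is to build $M$ as a gluing of two pieces so that $\pi_1(M)$ has a nonabelian outer automorphism group and so that one can enlarge $\Xaa$ by attaching a Canary--Storm type domain of discontinuity coming from a rigid piece. First I would take a compact acylindrical hyperbolizable $3$-manifold $N$ with $\partial N$ containing a genus $\ge 2$ surface $\Sigma$, and glue to $N$ along $\Sigma$ a trivial $I$-bundle $\Sigma\times[0,1]$ (or, more usefully, an interval bundle over a surface with larger automorphism group), obtaining $M$. Then $\pi_1(M)$ is one-ended, torsion-free and hyperbolic by the Bestvina--Feighn combination theorem (the edge group $\pi_1(\Sigma)$ is quasiconvex and malnormal enough after the acylindrical choice), giving (1); and $\Out(\pi_1(M))$ contains the mapping class group of $\Sigma$ acting through the $I$-bundle factor, which is not virtually abelian, giving (2). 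The JSJ splitting of $\pi_1(M)$ then has a rigid vertex $\pi_1(N)$ and a Fuchsian vertex $\pi_1(\Sigma)$ joined along the cyclic edge groups corresponding to $\partial\Sigma$.

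For (3) I would invoke the Canary--Storm machinery directly. Canary and Storm \cite{Canary-Storm} produce, for $\pi_1(M)$ one-ended and not a surface group, an open $\Out(\pi_1(M))$-invariant subset $Y(M)\subset\mathbb{X}(\pi_1(M),\PSL_2(\C))$ on which $\Out(\pi_1(M))$ acts properly discontinuously and which strictly contains $\mathbb{X}_{\mathrm{A}}(\pi_1(M),\PSL_2(\C))=\operatorname{int}(AH(M))$. Following the reasoning already used in the proof of Theorem \ref{CS}, Lemmas 8.2 and 8.3 of \cite{Canary-Storm} show that any purely hyperbolic representation in $\partial AH(M)$ is amalgam Anosov, so $\mathbb{X}_{\mathrm{A}^2}(\pi_1(M),\PSL_2(\C))$ meets $\partial AH(M)$; hence it is genuinely larger than $\mathbb{X}_{\mathrm{A}}$. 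One then needs $\mathbb{X}_{\mathrm{A}^2}(\pi_1(M),\PSL_2(\C))\subseteq Y(M)$ and $Y(M)\supsetneq\mathbb{X}_{\mathrm{A}^2}(\pi_1(M),\PSL_2(\C))$: the containment should follow because $Y(M)$ is designed to contain all representations whose restriction to each ``characteristic'' submanifold group (the rigid and $I$-bundle pieces) is well behaved, which is exactly what amalgam Anosov encodes, while the strictness follows from the fact that $Y(M)$ also contains indiscrete or non-faithful representations obtained by bending along $\Sigma$ that fail to be amalgam Anosov because the restriction to the Fuchsian vertex degenerates.

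The technical heart, and the step I expect to be the main obstacle, is verifying the inclusion $\mathbb{X}_{\mathrm{A}^2}(\pi_1(M),\PSL_2(\C))\subseteq Y(M)$ and the strictness of the inclusion simultaneously with properness of the $\Out$-action on $Y(M)$. The subtlety is that $Y(M)$ from \cite{Canary-Storm} is defined via a compact core / characteristic submanifold criterion rather than via the JSJ vertex groups, so I would need to match the JSJ splitting of the abstract group $\pi_1(M)$ with the topological characteristic submanifold decomposition of $M$ (this is essentially Sela's and Bowditch's uniqueness of the JSJ together with the fact that for Haken manifolds the JSJ of $\pi_1$ is realized geometrically). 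Once that dictionary is in place, amalgam Anosov on the Fuchsian and cyclic vertices translates into the conditions defining $Y(M)$, and the example in section \ref{aanotsa} (or a minor variant) supplies representations in $Y(M)\smallsetminus\mathbb{X}_{\mathrm{A}^2}$. I would then conclude by citing Theorem \ref{thm:main} for proper discontinuity on $\mathbb{X}_{\mathrm{A}^2}$ and \cite{Canary-Storm} for proper discontinuity on $Y(M)$, so that $Y(M)$ is the desired strictly larger domain.
```
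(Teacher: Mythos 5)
Your overall strategy (a JSJ splitting with a Fuchsian vertex, the Canary--Storm domain $W(M)$, and the gap between primitive-stable and convex cocompact) is pointed in the right direction, but there are two concrete problems. First, the topological construction does not work as written: gluing the trivial $I$-bundle $\Sigma\times[0,1]$ to $N$ along the subsurface $\Sigma\subset\partial N$ just attaches a collar, so $M\cong N$ and you get no new splitting at all; to produce a Fuchsian vertex with cyclic edge groups you must take $\Sigma$ with boundary and attach the $I$-bundle along the vertical annuli $\partial\Sigma\times[0,1]$ to annuli in the boundary of the acylindrical piece(s), as in the paper's construction with two copies of $M_0$ glued to $S\times[0,1]$ along collars of curves $c^i$.

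Second, and more seriously, your plan for (3) hinges on proving $\Xaa\subseteq W(M)$, which you correctly identify as the main obstacle but do not resolve --- and the paper itself explicitly declines to prove it (it only remarks that this containment ``seems likely''). The paper sidesteps the issue entirely by setting $Y(M)=W(M)\cup\mathbb{X}_{\mathrm{A}^2}(\pi_1(M),\PSL_2(\C))$, so the containment is free; properness of the action on the union then follows because both pieces satisfy the strong exiting property (orbits of compacta leave every compact subset of all of $\X$, Proposition \ref{strongPD} and its analogue in \cite{Canary-Storm}). What remains is only to exhibit a point of $W(M)\smallsetminus\Xaa$, and here your mechanism (``bending along $\Sigma$'') is not the right one and is not carried out. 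The actual mechanism is that for the \emph{free} Fuchsian vertex group $\pi_1(S)$ the primitive-stable representations strictly contain the convex cocompact ones: one takes $\rho_S$ primitive-stable but not convex cocompact, extends it over the rigid vertices via an epimorphism $\alpha:\pi_1(M_0)\to F_2$ (this is why the rigid pieces must be chosen to admit such an epimorphism with a boundary curve mapping to a generator), and verifies the registering-subgroup conditions for $W(M)$ using Minsky's lemma that the restriction of a primitive-stable representation to a proper free factor is convex cocompact. The resulting $\rho$ lies in $W(M)$ but its restriction to the Fuchsian vertex is not Anosov, so $\rho\notin\Xaa$. Without the epimorphism to $F_2$ and the free-factor argument, you have no way to certify membership in $W(M)$ for a representation that fails to be amalgam Anosov.
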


\begin{proof}
Let $M_0$ be a compact, acylindrical, hyperbolizable 3-manifold without toroidal boundary components such that there exists an epimorphism $\alpha : \pi_1(M_0) \to F_2$, where $F_2$ is the free group on 2 elements. Furthermore, suppose that there exists a simple closed curve $c$ on $\partial M_0$ such that $\alpha(c)$ is a generator of $F_2$. Let $S$ be a compact, orientable 2-holed surface of genus 3 with boundary components $\partial S^1$ and $\partial S^2$.

Let $M_0^1$ and $M^2_0$ be two copies of $M_0$ containing copies $c^1$ and $c^2$ of $c$. We form $M$ from $S \times [0,1]$, $M_0^1$, and $M_0^2$ by identifying $\partial S^i\times [0,1]$ with a collar neighborhood $C^i$ of $c^i$ in $\partial M_0^i$ for $i=1,2$.
Results of Bestvina--Feighn \cite{Bestvina-Feighn} again imply that $\pi_1(M)$ is hyperbolic. Since $S \times [0,1]$, $M_0^1$, and $M_0^2$ are irreducible, and they
are attached to one another along incompressible annuli to form $M$, $M$ is also irreducible.
Thurston's Geometrization Theorem (see Morgan \cite{morgan}) then implies that $M$ is hyperbolizable.
Moreover, there is a graph of groups decomposition of $\pi_1(M)$ which has 5 vertices, two rigid vertices identified with $\pi_1(M_0^1)$ and $\pi_1(M_0^2)$, two cyclic vertices, identified with $\pi_1(C^1)$ and $\pi_1(C^2)$, and one Fuchsian vertex, identified with $\pi_1(S)$. Therefore, $\pi_1(M)$ is torsion-free, freely indecomposable, and
not cyclic, hence $\pi_1(M)$ is one-ended and the given graph of groups decomposition is the JSJ splitting:
\begin{center}
\begin{tikzpicture}
[every node/.style={circle, fill=blue!10}]
\node (n1) at (1,1) {$\pi_1(M_0^1)$};
\node (n2) at (3,1) {$\Z$};
\node (n3) at (5,1) {$\pi_1(S)$};
\node (n4) at (7,1) {$\Z$};
\node (n5) at (9,1) {$\pi_1(M_0^2)$};

\foreach \from/\to in {n1/n2, n2/n3, n4/n3, n4/n5} \draw (\from) -- (\to);

\end{tikzpicture}
\begin{tikzpicture}
\node (n1) at (1, 1) {$\langle c^1 \rangle$};
\node (n2) at (3, 1) {$\langle t_1 \rangle$};
\node (n3) at (5, 1) {$\langle \partial S^1 \rangle \quad \langle \partial S^2 \rangle$};
\node (n4) at (7,1) {$\langle t_2 \rangle$};
\node (n5) at (9,1) {$\langle c^2 \rangle$};

\foreach \from/\to in {n1/n2} \draw[<-] (\from) -- (\to);
\foreach \from/\to in {n2/n3} \draw[->] (\from) -- (\to);
\foreach \from/\to in {n4/n3} \draw[->] (\from) -- (\to);
\foreach \from/\to in {n4/n5} \draw[->] (\from) -- (\to);

\end{tikzpicture}
\end{center}

Let $W(M)$ be the domain of discontinuity constructed by Canary and Storm \cite{Canary-Storm} for the action of $\Out(\pi_1(M))$ on $\mathbb{X}(\pi_1(M),\PSL_2(\C))$. Recall that $\rho\in W(M)$ if and only if
\begin{enumerate}
\item
the restriction of $\rho$ to the free group $\pi_1(S)$ is primitive-stable, and
\item
for $i = 1, 2$, there exists a subgroup $H^i$ of $\pi_1(M)$ freely generated by $c^i = \partial S^i$, an element of $\pi_1(M_0^i)$, 
and an element of $\pi_1(S)$ such that the restriction of $\rho$ to $H^i$ is primitive-stable.
\end{enumerate}
We refer the reader to Minsky \cite{Minsky} for the definition of a primitive-stable representation of a free group $F_n$, but we recall that the set of primitive-stable representations is a domain of discontinuity for the action of $\Out(F_n)$ on $\mathbb{X}(F_n,\PSL_2(\C))$ that contains $\mathbb{X}_{\mathrm{A}}(F_n,\PSL_2(\C))$ as a proper subset.

Let
\[
Y(M)=W(M) \cup \mathbb{X}_{\mathrm{A}^2}(\pi_1(M),\PSL_2(\C)).
\]
Then $Y(M)$ is an $\Out(\pi_1(M))$-invariant open subset of $\mathbb{X}(\pi_1(M),\PSL_2(\C))$ on which $\Out(\pi_1(M))$ acts properly discontinuously. It only remains to prove that
\[
W(M) \smallsetminus \mathbb{X}_{\mathrm{A}^2}(\pi_1(M),\PSL_2(\C))
\]
is nonempty.

Let $\rho_S : \pi_1(S) \to \PSL_2(\C)$ be a primitive-stable representation that is not convex cocompact, hence not Anosov. For $i=1,2$, 
let $J^i$ be a proper free factor of rank 3 of $\pi_1(S)$ so that $\partial S^i$ is an element of a minimal generating set $\{\partial S^i, a_2^i, a_3^i\}$ for $J^i$.
(For example, one may choose $J^i$ to be the fundamental group of an essential twice-punctured torus subsurface of $S$ with $\partial S^i$ as
one boundary component.)
Lemma 3.2 in Minsky \cite{Minsky} then implies that the restriction of $\rho_S$ to $J^i$ is convex cocompact. Let $g$ be an element of $\pi_1(M_0)$ such that $\alpha(g)$ and $\alpha(c)$ generate $F_2$ and let $g^i$ be the copy of $g$ in $M_0^i$. Let $\alpha^i:\pi_1(M_0^i)\to F_2$ be the result of precomposing $\alpha$ with the identification of $\pi_1(M_0^i)$ with $\pi_1(M_0)$ and define $\nu^i : F_2 \to \PSL_2(\C)$ by $\nu^i(\alpha^i(c^i)) = \rho_S(\partial S^i)$ and $\nu^i(\alpha^i(g^i))=\rho_S(a_3^i)$.

We define $\rho : \pi_1(M) \to \PSL_2(\C)$ to agree with $\rho_S$ on $\pi_1(S)$ and agree with $\nu^i\circ\alpha^i$ on
$\pi_1(M_0^i)$ for each $i$. For each $i$, let $H^i$ be the subgroup of $\pi_1(M)$ generated by $c^i$, $a_2^i$ and $g^i$. Then $\rho(H^i)=\rho_S(J^i)$ is a convex cocompact free group on 3 generators, which implies that $H^i$ is a free group on 3 generators and the restriction of $\rho$ to $H^i$ is convex cocompact. It follows that $\rho \in W(M)$. However, since $\rho_S$ is not Anosov, $\rho$ is not amalgam Anosov. This completes the proof.
\end{proof}

\medskip\noindent
{\bf Remarks:}
(1) If one has a domain of discontinuity $P(F_n,G)$ for the action of
 $\Out(F_n)$ on $\X$ that is strictly larger than $\mathbb{X}_{\mathrm{A}}(F_n,G)$ for all $n\ge 2$, and the JSJ splitting of $\Gamma$ is nontrivial
and contains a Fuchsian vertex group, then one may generalize the construction of Canary--Storm \cite{Canary-Storm} by
considering the set $Q(\Gamma,G)\subset \X$ of representations so that (1) if $\Gamma_v$ is a Fuchsian vertex group, then 
$\rho|_{\Gamma_v}\in P(\Gamma_v,G)$, and (2) if $\Gamma_v$ is a cyclic vertex group, then there exists a group $H$
registering $\Gamma_v$ so that $\rho|_H\in P(\Gamma_v,G)$. The set $Q(\Gamma,G)$ will be a domain of discontinuity for the action of
$\Out(\Gamma)$ on $\X$ that contains $\Xaa$. It seems likely that it would often be the case that $Q(\Gamma,G)$ strictly contains $\Xaa$.
 
(2) It is an open question whether or not $W(M)$ is a maximal domain of discontinuity for the action of $\Out(\pi_1(M))$ on $\mathbb{X}(\pi_1(M),\PSL_2(\C))$. This is related to the question of whether or not the set of primitive-stable representations is a maximal domain of discontinuity for the action of $\Out(F_n)$ on $\mathbb{X}(F_n,\PSL_2(\C))$.

\newpage

\appendix

\section{Appendix: Projective Anosov Schottky groups and strongly amalgam Anosov representations}\label{Appendix}

\medskip

\begin{center}
{RICHARD D.\ CANARY, MICHELLE LEE, ANDR\'ES SAMBARINO, AND MATTHEW STOVER}
\end{center}

\medskip

The purpose of this appendix is to prove that strongly amalgam Anosov representations are amalgam Anosov. We recall that if $\Gamma$ is
a one-ended, torsion-free hyperbolic group and $G$ is a semisimple Lie group with finite center, then $\rho\in\X$ is \emph{strongly amalgam Anosov} if there exists a pair 
$P^\pm$ of proper opposite parabolic subgroups so that $\rho|_{\Gamma_v}$ is $(P^+,P^-)$-Anosov for every Fuchsian or rigid vertex group $\Gamma_v$ in the JSJ decomposition of $\Gamma$.

\begin{Theorem}
\label{main-appendix}
Suppose that $\Gamma$ is a one-ended, torsion-free word hyperbolic group and $G$ is a semisimple Lie group with finite center. If $\rho \in \X$ is strongly amalgam Anosov, then $\rho$ is amalgam Anosov.
\end{Theorem}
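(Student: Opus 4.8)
The plan is to reduce the statement to the combination of two facts: first, that the restriction of a $(P^+,P^-)$-Anosov representation to a quasiconvex subgroup is again $(P^+,P^-)$-Anosov (Lemma \ref{lem:qconvexAnosov}); and second, that powers of generic biproximal elements generate a projective Anosov Schottky subgroup (the ``Schottky'' theorem, Theorem \ref{thm:ConvexSchottky}, whose proof in turn relies on work of Benoist, Quint, and Gu\'eritaud--Guichard--Kassel--Wienhard). By Proposition \ref{Plucker}, applying the Pl\"ucker representation $\tau:G\to\SL_d(\R)$ turns a $(P^+,P^-)$-Anosov representation into a projective Anosov one without affecting the Anosov property, so there is no loss in working in $\SL_d(\R)$ with $P^\pm$ the stabilizers of a line and a complementary hyperplane; any registering subgroup for $\tau\circ\rho$ pulls back to one for $\rho$ by Proposition \ref{Plucker} again. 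So I would first set up this reduction, so that from now on $\rho:\Gamma\to\SL_d(\R)$ is projective Anosov on every Fuchsian or rigid vertex group.

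Next I would dispose of the two conditions in the definition of amalgam Anosov (Definition \ref{aadef}) separately. Condition (1) is nearly immediate: a Fuchsian vertex group is by hypothesis $(P^+,P^-)$-Anosov under $\rho$, so there is nothing to do. The content is condition (2): for each cyclic vertex group $\Gamma_v$, with adjacent vertices $w_1,\dots,w_{n_v}$ (none of which is itself cyclic, by Theorem \ref{thm:jsj}(1), hence each of which is Fuchsian or rigid and therefore carries a projective Anosov restriction of $\rho$), I must produce a registering subgroup $H$ — that is, a free subgroup freely generated by a nontrivial $a_0\in\Gamma_v$ and elements $a_i\in\Gamma_{w_i}$ — such that $\rho|_H$ is projective Anosov. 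The idea is to take $a_0$ to be a high power of a generator of $\Gamma_v$ and each $a_i$ to be a high power of a suitable element of $\Gamma_{w_i}$, and then invoke the Schottky theorem to conclude that, for sufficiently high powers, the group they generate is projective Anosov (and freely generated by them, hence genuinely registering; a ping-pong argument as in Lemma \ref{lem:RegistersExist} gives the free generation, or one can extract it directly from the Schottky conclusion).

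To apply the Schottky theorem I need the chosen elements, or rather their images under $\rho$, to be biproximal and to satisfy the genericity (transversality of attracting/repelling flags) hypothesis of Theorem \ref{thm:ConvexSchottky}. Since $\rho$ restricted to each vertex group $\Gamma_{w_i}$ and to $\Gamma_v$ is projective Anosov, every infinite-order element of those subgroups has $\rho$-image that is biproximal in $\SL_d(\R)$, with attracting and repelling fixed points given by the Anosov limit maps evaluated at the endpoints of the element in the boundary of the ambient group $\Gamma$ — here I use that the limit map of $\rho$ on all of $\Gamma$ restricts to the limit maps of the vertex subgroups, together with the transversality property of Anosov limit maps. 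The genericity condition then amounts to: the fixed points $\rho(a_0)^\pm, \rho(a_1)^\pm,\dots,\rho(a_{n_v})^\pm$ in $\RPd$ and $\RPddual$ are pairwise in general position. This can be arranged by choosing, inside each vertex group, the underlying element so that its pair of fixed points in $\partial_\infty\Gamma$ is disjoint from the finitely many already-chosen fixed points — possible because each vertex group is non-elementary (infinitely many distinct fixed-point pairs are available) and the limit maps are injective, so distinct boundary points give distinct, hence transverse, flags.

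\textbf{Main obstacle.} I expect the genuine difficulty to be exactly this genericity/transversality bookkeeping: one must verify that the limit map of the global representation $\rho$ really does restrict correctly to each vertex subgroup (so that ``Anosov fixed points'' are computed consistently), and that one can simultaneously choose the $n_v+1$ elements with all pairs of flags transverse — in particular ensuring that $a_0\in\Gamma_v$ and $a_i\in\Gamma_{w_i}$ can be picked so that $\rho(a_0)$ and $\rho(a_i)$ have transverse attracting/repelling data despite $\Gamma_v$ being a common edge group sitting inside $\Gamma_{w_i}$'s neighbor. Once the elements are in Schottky position, Theorem \ref{thm:ConvexSchottky} does the rest, and Proposition \ref{Plucker} transports the conclusion back to $G$; assembling these per-cyclic-vertex registering subgroups then yields that $\rho$ is amalgam Anosov.
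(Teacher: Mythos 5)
Your proposal follows essentially the same route as the paper's proof: reduce to projective Anosov representations via the Pl\"ucker embedding, observe that the Fuchsian condition is automatic, and for each cyclic vertex group choose a generator of $\Gamma_v$ together with one element from each adjacent (necessarily Fuchsian or rigid) vertex group whose attracting/repelling flags --- read off from the limit maps of the Anosov restrictions $\rho|_{\Gamma_{w_i}}$ --- form a well-positioned symmetric set, then invoke Theorem \ref{thm:ConvexSchottky} on sufficiently high powers to obtain a registering subgroup on which $\rho$ is projective Anosov. The only slip is your appeal to a ``limit map of $\rho$ on all of $\Gamma$,'' which need not exist since $\rho$ is not assumed globally Anosov; it is also not needed, since the paper works directly with the limit maps $\xi_{w_i}, \theta_{w_i}$ of the restrictions together with the density of fixed-point pairs in $\partial_\infty\Gamma_{w_i}\times\partial_\infty\Gamma_{w_i}$, exactly the transversality bookkeeping you flag as the main obstacle.
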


The proof is based on the fact, see Theorem \ref{thm:ConvexSchottky} below, that if one is given a well-positioned collection of biproximal elements of $\SL_d(\R)$, then one may pass to large enough powers that the group they generate is a free, projective Anosov group, i.e., a projective Anosov Schottky group. The existence of a result of this form has been well-known among experts. For example, in the typical situation where the Schottky group produced by Benoist \cite[Sec. 6]{BenoistAnnals}
is irreducible, it follows immediately that it is projective Anosov from results of Quint \cite[Prop. 3.3]{QuintDynam} and
Guichard-Wienhard \cite[Prop. 4.10]{Guichard-Wienhard}.
Kapovich, Leeb, and Porti \cite[Theorem 7.40]{KLP} previously established a
generalization of Theorem \ref{thm:ConvexSchottky} to the setting of general semisimple Lie groups.
Our proof of Theorem \ref{thm:ConvexSchottky} relies heavily on results and techniques of Benoist \cite{BenoistAnnals,Benoist} and Quint \cite{QuintDynam} as well as a criterion for a representation to be projective Anosov due to Gu\'eritaud--Guichard--Kassel--Wienhard \cite{GGKW}.

Recall that a matrix $\gamma \in \SL_d(\R)$ is \emph{proximal} if its eigenvalue of maximal modulus is real and of multiplicity one. It is \emph{biproximal} if both $\gamma$ and $\gamma^{-1}$ are proximal. If $\gamma$ is proximal, then it has a unique attracting fixed point $\gamma^+ \in \RPd$, which is the eigenline associated to the eigenvalue of maximal modulus. Let $\gamma^- \in \RPddual$ denote the repelling hyperplane of $\gamma$. (We regularly use the identification of $\RPddual$ with the Grassmannian of hyperplanes in $\R^d$.)
If $\rho:\Gamma\to \SL_d(\R)$ is projective Anosov with associated limit maps $\xi:\partial\Gamma\to \RPd$ and $\theta:\partial\Gamma\to\RPddual$
and $c\in\partial_\infty\Gamma$ is an attracting fixed point of an element $\gamma\in\Gamma$, then $\rho(\gamma)$ is biproximal and 
\[
\xi(c)=\rho(\gamma)^+\quad\textrm{and}\quad \theta (c)=(\rho(\gamma)^{-1})^-
\]
(see Lemma \ 3.1 and Corollary \ 3.2 in Guichard--Wienhard \cite{Guichard-Wienhard}).

A set $S$ of matrices is \emph{symmetric} when $\gamma \in S$ if and only if $\gamma^{-1} \in S$. A symmetric set $S = \{\gamma_1,\ldots,\gamma_{2r}\}$ of biproximal matrices in $\SL_d(\R)$ is \emph{well-positioned} if,
\begin{enumerate}
\item
$\gamma_i^+ \neq \gamma_j^+$ if $i\ne j$, and
\item
$\gamma_i^+ $ is not contained in the hyperplane $\gamma_j^-$ whenever $\gamma_j \neq \gamma_i^{-1}$.
\end{enumerate}
Notice that conditions (1) and (2) immediately imply that $\gamma_i^- \neq \gamma_j^-$ if $i\ne j$.

The key technical result needed for the proof of Theorem \ref{main-appendix} is the following.

\begin{Theorem}
\label{thm:ConvexSchottky}
{\rm (Tits \cite{tits-ping}, Benoist \cite{BenoistAnnals}, Quint \cite{QuintDynam}, Kapovich-Leeb-Porti \cite{KLP})}
If 
\hbox{$S = \{\gamma_1, \dots, \gamma_{2 r}\}$} is a well-positioned symmetric set of biproximal matrices in $\SL_d(\R)$,
then there exists $N > 0$ such that if $n\ge N$, then the subgroup of $\SL_d(\R)$ generated by 
\[
S^n = \{\gamma_1^n,\ldots,\gamma^n_{2r}\}
\]
is a free group of rank $r$ and the induced representation \hbox{$\rho : F_r \to \SL_d(\R)$} is projective Anosov.
\end{Theorem}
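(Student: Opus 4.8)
The plan is to run a Tits-style ping-pong argument after replacing each $\gamma_i$ by a sufficiently high power, use it to produce both the free generation and a pair of transverse $\rho$-equivariant boundary maps, and then conclude that $\rho$ is projective Anosov from the singular-value-gap criterion of Gu\'eritaud--Guichard--Kassel--Wienhard \cite{GGKW}, the required gap estimate being supplied by the calculus of products of proximal elements in general position due to Benoist \cite{BenoistAnnals,Benoist} and Quint \cite{QuintDynam}. The case $r=1$ is immediate, since a biproximal matrix has infinite order and its attracting/repelling data on $\RPd$ and $\RPddual$ already give the boundary maps on the two-point set $\partial_\infty\Z$, so I assume $r\ge 2$.

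First I would set up the tables. For $i\in\{1,\dots,2r\}$ write $\bar\imath$ for the index with $\gamma_{\bar\imath}=\gamma_i^{-1}$. Fixing a norm on $\R^d$ and the induced metrics on $\RPd$ and $\RPddual$, conditions (1) and (2) let me choose, for each $i$, a closed ball $b_i^+\subset\RPd$ about $\gamma_i^+$ and a closed neighborhood $B_i^-\subset\RPd$ of the projective hyperplane $[\gamma_i^-]$, small enough that the $b_i^+$ are pairwise disjoint and $b_j^+\cap B_i^-=\emptyset$ whenever $j\ne\bar\imath$. Since $\gamma_i$ is proximal, $d(\gamma_i^+,[\gamma_i^-])>0$, and as $S$ is finite this is bounded below, so there is $N$ such that for $n\ge N$ every $\gamma_i^n$ carries $\RPd\setminus B_i^-$ into $\mathrm{int}(b_i^+)$ and contracts it by a factor $\lambda_n$ with $\lambda_n\to 0$; dually, $\gamma_i$ acts on $\RPddual$ proximally with attracting point $(\gamma_i^{-1})^-$ and a repelling hyperplane transverse to $\gamma_i^+$, which gives the analogous contraction for $\gamma_i^n$ on dual tables.

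Given this, the ping-pong is routine: for a reduced word $w=\gamma_{i_1}^{n}\cdots\gamma_{i_\ell}^{n}$ (so $i_{k+1}\ne\overline{i_k}$) one finds $w(b_j^+)\subset b_{i_1}^+$ for every $j\ne\overline{i_\ell}$ by applying the factors from the right, so choosing $j\notin\{i_1,\overline{i_\ell}\}$ (possible since $2r\ge 3$) shows $w\ne 1$; hence $F_r\to\SL_d(\R)$, $x_i\mapsto\gamma_i^n$, is injective and $\langle S^n\rangle$ is free of rank $r$. For $x=\gamma_{i_1}^{n}\gamma_{i_2}^{n}\cdots\in\partial_\infty F_r$ the nested compacta $\gamma_{i_1}^{n}\cdots\gamma_{i_{k-1}}^{n}(b_{i_k}^+)$ have diameters at most $\lambda_n^{\,k-1}\max_i\mathrm{diam}(b_i^+)\to 0$, so their intersection is a single point, defining a continuous injective $\rho$-equivariant map $\xi:\partial_\infty F_r\to\RPd$; the dual construction gives $\theta:\partial_\infty F_r\to\RPddual$. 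Transversality of $\xi$ and $\theta$, after stripping a longest common prefix, reduces to $\gamma_i^+\notin(\gamma_j^{-1})^-$ for the two distinct tables involved, which is guaranteed by condition (2) (the two tables being attached to distinct elements of $S^n$), shrinking the tables if needed.

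It remains to show $\rho$ is projective Anosov; as $\rho$ need not be irreducible, Proposition \ref{convex irreducible} does not apply, so I would instead verify the hypothesis of the \cite{GGKW} criterion, namely (since $F_r$ is hyperbolic) a uniform gap $\log\bigl(\sigma_1(\rho(w))/\sigma_2(\rho(w))\bigr)\ge c\,|w|-C$ for all $w\in F_r$, where $|w|$ is the word length in the generators $\gamma_i^{\pm n}$. Here I would use that a finite well-positioned $S$ forces a fixed $\epsilon_0>0$ below which all the relevant separations (among the $\gamma_i^+$, the hyperplanes $[\gamma_i^-]$, and the non-inverse incidences) lie, so that — enlarging $N$ — every $\gamma_i^{nm}$ with $|m|\ge 1$ is $(\epsilon_0,\epsilon_n)$-proximal with $\epsilon_n\to 0$, whence by Benoist's and Quint's product estimates a reduced product of $L$ such factors is again proximal with
\[
\log\frac{\sigma_1(\rho(w))}{\sigma_2(\rho(w))}\ \ge\ L\Bigl(n\cdot\min_i\log\tfrac{|\lambda_1(\gamma_i)|}{|\lambda_2(\gamma_i)|}+\log\kappa_0\Bigr)-C,
\]
for a constant $\kappa_0>0$ depending only on $S$; for $n$ large the bracket is positive, $L=|w|$, and the \cite{GGKW} criterion then yields that $\rho$ is projective Anosov with boundary maps $\xi,\theta$. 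A single $N$ works for all larger $n$ since the estimate only improves. The step I expect to be the main obstacle is exactly this last uniform estimate: for a \emph{fixed} word it is easy to take $n$ large, but controlling proximality strength and the singular-value gap simultaneously over \emph{all} reduced words with \emph{one} choice of $n$ is what needs the $(\epsilon_0,\epsilon)$-Schottky machinery of Benoist and Quint (all of whose constants depend only on $S$) together with an induction on word length that closes because $\epsilon_n$ stays below the Schottky threshold once $n\ge N$. (For a route avoiding this machinery via different, more geometric, methods, see Kapovich--Leeb--Porti \cite[Theorem 7.40]{KLP}.)
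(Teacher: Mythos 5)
Your proposal is correct and follows essentially the same route as the paper: a Tits/Benoist-style ping-pong on projective space and its dual to get freeness and transverse, dynamics-preserving limit maps (the paper's Proposition \ref{prop:LimitMaps}), followed by verification of the Gu\'eritaud--Guichard--Kassel--Wienhard gap criterion (Theorem \ref{GGKW criterion}) via Benoist's estimate $\|x_0\cdots x_L\|\ge C_\epsilon^{L+2}\|x_0\|\cdots\|x_L\|$ for reduced words in an $\epsilon$-Schottky set, which yields exactly your per-letter inequality with $\log\kappa_0=2\log C_\epsilon$ and shows the bracket becomes positive once the powers are large enough that $\alpha_1(\mu(\gamma_i^n))\ge -3\log C_\epsilon$ (the paper's Propositions \ref{prop:BenoistOperatorNorm}, \ref{convexanosov} and Lemma \ref{lem:SchottkyPower}). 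The only cosmetic differences are your separate treatment of $r=1$ and your phrasing of the per-letter gap via eigenvalue ratios, which the paper handles through $\lambda(\gamma)=\lim\frac1n\mu(\gamma^n)$.
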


We now prove Theorem \ref{main-appendix} assuming Theorem \ref{thm:ConvexSchottky}, and return to the proof of Theorem \ref{thm:ConvexSchottky} afterwards.

\begin{proof}[Proof of Theorem \ref{main-appendix}]
We first suppose that $\rho:\Gamma\to \SL_d(\R)$ is a representation such that the restriction of $\rho$ to any Fuchsian or rigid vertex group of the JSJ decomposition of $\Gamma$ is projective Anosov. If $w$ is a Fuchsian or rigid vertex of the JSJ decomposition of $\Gamma$, let
\[
\xi_w:\partial_\infty\Gamma_w\to \RPd\quad\textrm{and}\quad\theta_w:\partial_\infty\Gamma_w\to\RPddual
\]
be the limit maps for $\rho|_{\Gamma_w}$.

Let $v$ be a cyclic vertex and let $\{w_1,\ldots,w_r\}$ be the vertices adjacent to $v$ and $a$ be the generator for $\Gamma_v$. Since $\Gamma_{w_1}$ is nonelementary, pairs of fixed points of elements of $\Gamma_{w_1}$ are dense in $\partial\Gamma_{w_1} \times \partial\Gamma_{w_1}$. Thus, since $\xi_{w_1}$ is injective and $\theta_{w_1}$ is transverse to $\xi_{w_1}$, we can find $g_1\in \Gamma_{w_1}$ so that $\rho(g_1)^+$ is distinct from $\rho(a)^+$ and not contained in $\rho(a)^-$, and $\rho(g_1)^-$ is distinct from $\rho(a)^-$ and does not contain $\rho(a)^+$. Therefore, $\{\rho(a)^{\pm 1},\rho(g_1)^{\pm 1}\}$ is a well-positioned symmetric set of biproximal matrices.

Similarly, we iteratively choose $g_i\in\Gamma_{w_i}$ so that
\[
\{\rho(a)^{\pm 1},\rho(g_1)^{\pm 1},\ldots,\rho(g_i)^{\pm 1}\}
\]
is a well-positioned symmetric set of biproximal matrices. Let 
\[
S=\{\rho(a)^{\pm 1},\rho(g_1)^{\pm 1},\ldots,\rho(g_r)^{\pm 1}\}
\]
be the final result of this process. Theorem \ref{thm:ConvexSchottky} implies that there is a positive integer $N$ such that if $n\ge N$, then
\[
\Lambda_n = \langle S^n \rangle
\]
is a free group of rank $r + 1$ and the restriction of $\rho$ to $\Lambda_n$ is projective Anosov. Therefore, for $n \ge N$, 
$H_n=\langle a^n,g_1^n,\ldots,g_r^n\rangle$ is a registering subgroup for $\Gamma_v$ and the restriction of $\rho$ to $H_n$ is projective Anosov. Therefore, for every cyclic vertex group there exists a registering subgroup $H$ such that the restriction of $\rho$ to $H$ is projective Anosov.
Since the restriction of $\rho$ to every Fuchsian vertex group is projective Anosov by assumption, it follow that
$\rho$ is amalgam Anosov.

Now suppose that $\rho:\Gamma\to G$ is strongly amalgam Anosov and $P^\pm$ is a pair of proper opposite parabolic subgroups so that the restriction of $\rho$ to each Fuchsian or rigid vertex group is $(P^+,P^-)$-Anosov. Let \hbox{$\tau:G\to\SL_d(\R)$} be the irreducible representation provided by Proposition \ref{Plucker}. Then the restriction of $\tau\circ\rho$ to every rigid and Fuchsian vertex group is projective Anosov. It follows from the above analysis that for every cyclic vertex group there exists a registering subgroup $H$ such that the restriction of $\tau\circ\rho$ to $H$ is projective Anosov, which implies that the restriction of $\rho$ to $H$ is $(P^+,P^-)$-Anosov. Therefore, $\rho$ is amalgam Anosov.
\end{proof}

We now turn to the proof of Theorem \ref{thm:ConvexSchottky}. We first recall
Gu\'eritaud, Guichard, Kassel, and Wienhard's \cite{GGKW} criterion for a representation to be projective Anosov.
(We have been informed that one can derive an analogue of this criterion, see Theorem \ref{GGKW criterion} below, 
from the work in \cite{KLP}.)

Choose the maximal torus $\mathfrak{a}$ for the Lie algebra $\mathfrak{sl}_d(\R)$ of $\SL_d(\R)$ for which $\exp(\mathfrak{a})$ is the group of diagonal matrices in $\SL_d(\R)$. We can then choose a positive Weyl chamber $\mathfrak{a}^+$ to be the interior of $\bar{\mathfrak{a}}^+$, where $\exp(\overline{\mathfrak{a}}^+)$ is the set of positive diagonal matrices with entries in descending order (from left to right). If $\alpha_1$ is the root of highest weight for $\mathfrak{a}^+$, then
\[
\alpha_1(a_1,a_2,\ldots,a_d)=a_1-a_2.
\]

Let $K \exp(\overline{\mathfrak{a}}^+) K$ be the Cartan decomposition of $\SL_d(\R)$ with respect to $\mathfrak{a}^+$ and a maximal compact subgroup $K=\mathrm{SO}(d)$,
\[
\mu : G \to \bar{\mathfrak{a}}^+
\]
be the associated Cartan projection (see \cite{BenoistAnnals, QuintDynam, Guichard-Wienhard}), and $\mu_i(g)$ denote the $i^{\rm th}$ entry of $\mu(g)$. Fix a norm $\| \ \|$ on $\mathbb R^d$ arising from a $K$-invariant inner product and 
let $\|\gamma\|$ be the operator norm of a linear transformation $\gamma$
with respect to $\|\ \|$.

Notice that
\[
\mu_1(g)=\log(||g||).
\]
Similarly, let 
\[
\lambda : G \to \overline{\mathfrak{a}}^+
\]
be the Jordan projection (see \cite{Benoist, QuintDynam, Guichard-Wienhard}), and let $ \lambda_i(g)$ denote the $i^{\rm th}$ entry of $\lambda(g)$. If $g$ is proximal, then $\Lambda_1(g)=e^{\lambda_1(g)}$ is the modulus of the eigenvalue of maximal modulus.

If $\Gamma$ is a hyperbolic group, we recall that maps
\[
\xi : \partial_\infty \Gamma \to \RPd \quad \textrm{and}\quad\theta: \partial_\infty \Gamma \to \RPddual
\]
are \emph{transverse} if $\xi(x)$ and $\theta(y)$ span $\R^d$ for all distinct elements $x\ne y$ in $\partial_\infty \Gamma$. Moreover, $\xi$ and $\theta$ are said to be {\em dynamics preserving} if, whenever $c\in\partial_\infty\Gamma$ is an attracting fixed point of $\gamma\in\Gamma$, $\xi(c)$ is an attracting eigenline of $\gamma$ and $\theta(c)$ is an attracting hyperplane of $\gamma$. 

\begin{Theorem}
{\rm (Gu\'eritaud--Guichard--Kassel--Wienhard \cite[Thm.\ 1.3]{GGKW})}
\label{GGKW criterion}
Suppose that $r\ge 2$ and $\rho:F_r\to \SL_d(\R)$ is a representation so that there exist transverse, dynamics preserving, $\rho$-equivariant, injective, continuous maps
\[
\xi : \partial_\infty F_r \to \RPd \quad \textrm{and}\quad\theta: \partial_\infty F_r \to \RPddual.
\]
If
\[
\lim_{n \to \infty} \alpha_1(\mu(x_0\ldots x_n)) = +\infty
\]
for every infinite reduced word $(x_n)$ in the generators of $F_r$, then $\rho$ is projective Anosov.
\end{Theorem}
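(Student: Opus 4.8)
The plan is to manufacture, directly from the boundary maps, the flow-theoretic Anosov data described in Section \ref{sec:anosov}, and then to use the singular-value gap hypothesis to supply the required contraction. It is convenient to work with the concrete geodesic flow of the free group: $\partial_\infty F_r$ is a Cantor set, reduced words are geodesics, a point of $\partial_\infty F_r^{(2)}$ is the pair of endpoints of a bi-infinite reduced word, and $\widetilde{U_0F_r}=\partial_\infty F_r^{(2)}\times\R$. First I would build the candidate section: over a geodesic with endpoints $(x^-,x^+)$ I assign the splitting $\R^d=\xi(x^+)\oplus\theta(x^-)$. Transversality guarantees this is a genuine direct-sum decomposition, hence a point of $G/L\subset\RPd\times\RPddual$, and $\rho$-equivariance together with continuity of $\xi$ and $\theta$ make the induced section $\sigma:U_0F_r\to\mathcal{X}_\rho$ continuous, equivariant, and flat along flow lines, since it depends only on the pair of endpoints. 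What remains is to verify condition (ii) of the definition, namely that $\phi_t$ dilates $\sigma^*E^+$ and contracts $\sigma^*E^-$.

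The analytic heart is that over a ray the rate of this (de)contraction is governed by the gap $\alpha_1(\mu(\,\cdot\,))=\log(\sigma_1/\sigma_2)$ of the partial products $\rho(x_0\cdots x_n)$. The fibre of $\sigma^*E^+$ is modeled on $\Hom(\xi(x^+),\theta(x^-))$, and the distortion of the flow on it is comparable to $\sigma_2/\sigma_1$ of the relevant partial product, \emph{provided} one knows that the top Cartan (singular) direction $U_1(\rho(x_0\cdots x_n))$ converges to $\xi(x^+)$ and the bottom singular hyperplane converges to $\theta(x^-)$. Establishing this convergence is where the hypotheses interact: the assumption $\alpha_1(\mu)\to+\infty$ forces the top singular line of $\rho(x_0\cdots x_n)$ to become asymptotically well-defined, while the dynamics-preserving and injectivity properties, combined with the density of attracting fixed points of elements of $F_r$ in $\partial_\infty F_r$ (at which the singular directions of high powers limit onto the proximal eigenline $\xi(\gamma^+)$), pin the limit to $\xi(x^+)$, with the dual statement for $\theta$. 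This identification of Cartan data with the boundary maps I would take from the estimates of Benoist \cite{BenoistAnnals,Benoist} and Quint \cite{QuintDynam}.

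The main obstacle, and the step on which I would spend the most effort, is upgrading the \emph{pointwise} divergence $\alpha_1(\mu(x_0\cdots x_n))\to+\infty$ along each ray to a \emph{uniform exponential} gap of the form $\alpha_1(\mu(\gamma))\ge c\,|\gamma|-c'$ with constants independent of $\gamma$, since only such a linear rate yields the exponential contraction demanded by an Anosov section. The difficulty is genuine: $\alpha_1\circ\mu$ is not subadditive, being the difference
\[
\alpha_1(\mu(g))=2\log\|g\|-\log\|\wedge^2 g\|
\]
of two subadditive functions, so one cannot simply sum estimates along a geodesic. I would proceed in two stages. First a \emph{stability} step: once the gap at $\rho(x_0\cdots x_n)$ exceeds a fixed threshold, the top singular direction is nearly determined and the gap is robust under concatenation of further letters (a dominated-splitting phenomenon), making $\alpha_1\circ\mu$ coarsely additive along geodesics precisely where it is large. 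Second a \emph{local-to-global bootstrap}: using compactness of $\partial_\infty F_r^{(2)}$ together with the pointwise hypothesis, an argument by contradiction — extracting a limiting bi-infinite geodesic from a putative sequence of long prefixes with bounded gap and invoking the fellow-traveling of geodesics in the hyperbolic group $F_r$ — shows the gap must exceed the threshold uniformly for all sufficiently long words, at which point the near-additivity of the stabilized quantity promotes this to the desired linear rate.

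Once the uniform exponential gap is in hand, the contraction and dilation of $\sigma^*E^\pm$ follow by integrating the estimate along flow lines, using the properties (1)--(4) of the flow recalled in Section \ref{sec:anosov}, and this completes the verification that $\rho$ is projective Anosov. I expect the stability-and-bootstrap step to be the only truly nontrivial part; the construction of the section and the integration are formal once the identification of Cartan directions with $\xi,\theta$ and the uniform gap are established.
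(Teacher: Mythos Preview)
The paper does not prove this statement: Theorem~\ref{GGKW criterion} is quoted from \cite[Thm.\ 1.3]{GGKW} and used as a black box in the proof of Proposition~\ref{convexanosov}. Indeed, the remark following Proposition~\ref{convexanosov} makes explicit that the authors originally had a direct argument (via condition (iv) of \cite[Prop.\ 3.16]{Guichard-Wienhard}) tailored to their Schottky setting, and chose to replace it by a citation to the more general result of Gu\'eritaud--Guichard--Kassel--Wienhard. So there is no proof in the paper to compare your proposal against.

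That said, your sketch is a reasonable outline of how one would actually prove the GGKW criterion. The construction of the section from $\xi$ and $\theta$ is straightforward and exactly right. Your identification of the genuine difficulty is also correct: the passage from pointwise divergence of $\alpha_1(\mu(x_0\cdots x_n))$ along each ray to a uniform linear lower bound is the substantive content, and it is not automatic. The ``stability-then-bootstrap'' strategy you describe --- first showing that once the gap is large it is coarsely additive (a dominated-splitting phenomenon), then using compactness of the boundary and a contradiction argument to get a uniform threshold --- is essentially the ``local-to-global'' principle that underlies the proofs in \cite{GGKW} and in the Morse-lemma approach of Kapovich--Leeb--Porti \cite{KLP}. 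One caution: your step pinning the limiting Cartan direction to $\xi(x^+)$ via density of attracting fixed points is a little delicate as stated, since you need uniformity in that convergence, not just convergence along individual sequences; in practice this is folded into the same compactness/bootstrap argument rather than treated separately.
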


We will show that Benoist's construction of Schottky groups in \cite{BenoistAnnals} can be slightly modified
so that the Schottky groups produced
satisfy the assumptions of Theorem \ref{GGKW criterion}.
Benoist \cite{BenoistAnnals} defines the distance between two lines $X, Y \in \RPd$ to be 
\[
d(X,Y)=\min\{\|x-y\|\ |\ x\in X, y\in Y, \|x\|=1,\|y\|=1\}.
\]
Given a subset $Z$ of $\RPd$ and $\epsilon>0$, define:
\begin{align*}
b(Z, \epsilon) &= \{X \in \RPd\ :\ d(X, Z) \le \epsilon\} \\
B(Z, \epsilon) &= \{X \in \RPd\ :\ d(X, Z) \ge \epsilon\}
\end{align*}

Following Benoist \cite{BenoistAnnals}, we say that a proximal matrix $\gamma$ is $\epsilon$-\emph{proximal} if 
\begin{enumerate}

\item
$d(\gamma^+, \gamma^-) \ge 2 \epsilon$, 

\item
$\gamma$ is $\epsilon$-Lipschitz on $B(\gamma^-, \epsilon)$, and

\item
$\gamma(B(\gamma^-, \epsilon)) \subset b(\gamma^+, \epsilon)$.
\end{enumerate}
Benoist \cite{BenoistAnnals} further says that a symmetric set $S = \{\gamma_1, \dots, \gamma_{2 r}\}$ of \hbox{$\epsilon$-proximal} matrices is $\epsilon$-\emph{Schottky} if $d(\gamma_i^+, (\gamma_j^{-1})^-) \ge 6 \epsilon$ for all $i \neq j$. Notice that this implies that $b(\gamma_i^+, \epsilon) \subset B(\gamma_j^-, \epsilon)$ whenever $\gamma_j \neq \gamma_i^{-1}$.

For an element $\gamma \in \SL_d(\R)$, let $\gamma^*$ be the element of $\SL((\R^d)^*)$ defined by
\[
\gamma^* (\theta) = \theta \circ \gamma^{-1}
\]
for all $\theta\in(\R^d)^*$. Recall that if $\gamma$ is a biproximal element of $\SL_d(\R)$, then
\[
(\gamma^*)^+ = ((\gamma^{-1})^-)^* \quad \textrm{and}\quad(\gamma^*)^- = ((\gamma^{-1})^+)^*.
\]
We will use the following additional properties of $\epsilon$-Schottky sets.

\begin{Lemma}\label{lem:ThanksFanny}
If $\epsilon>0$ and $S = \{\gamma_1, \dots, \gamma_{2 r}\}$ and $S^* = \{\gamma_1^*, \dots, \gamma_{2 r}^*\}$ are
$\epsilon$-Schottky sets of proximal matrices in $\SL_d(\R)$ and $\SL((\R^d)^*)$, then
\begin{enumerate}

\item
$V= \bigcap_{1 \le i \le 2 r} B(\gamma_j^-, \epsilon)$ is nonempty,
\item
$W = \bigcap_{1 \le i \le 2 r} B((\gamma_i^*)^-, \epsilon)\subset\RPddual$ is nonempty, and

\item
if $i \neq j$, $X \in b(\gamma_i^+, \epsilon)$, and $Y \in b((\gamma_j^*)^+, \epsilon)$, then $X$ and $Y$ span $\R^d$.

\end{enumerate}
\end{Lemma}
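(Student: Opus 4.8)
\emph{Strategy.} The plan is to deduce all three statements from quantitative estimates in Benoist's metric on $\RPd$ and the corresponding dual metric on $\RPddual$, using the duality relations $(\gamma^*)^+=((\gamma^{-1})^-)^*$ and $(\gamma^*)^-=((\gamma^{-1})^+)^*$ recorded above. The first observation is that (1) and (2) are formally dual: the correspondence $\gamma\mapsto\gamma^*$ sends the $\epsilon$-Schottky pair $(S,S^*)$ to the pair $(S^*,(S^*)^*)$, where $(S^*)^*$ is the action of $S$ on $(\R^d)^{**}=\R^d$; under this identification $\RPddual$ plays the role of $\RPd$ and $(\gamma_i^*)^-$ plays the role of $\gamma_i^-$, so statement (2) for $(S,S^*)$ is exactly statement (1) applied to $(S^*,(S^*)^*)$. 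Hence it suffices to prove (1) and (3).

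\emph{Proof of (1).} Applying the duality relation to $\gamma_i^{-1}$ shows that the point of $\RPddual$ dual to the hyperplane $\gamma_i^-$ is $((\gamma_i^*)^{-1})^+$, the attracting fixed point of $(\gamma_i^*)^{-1}$. Since $S$ is symmetric, $\{(\gamma_i^-)^*\}_i$ is precisely the set $\{(\gamma_j^*)^+\}_j$ of attracting fixed points of the elements of $S^*$. Because $S^*$ is $\epsilon$-Schottky, and because $g^+\subset(g^{-1})^-$ for every proximal $g$, for $i\ne j$ the point $(\gamma_j^*)^+$ lies in the hyperplane $((\gamma_j^*)^{-1})^-$, which is at distance at least $6\epsilon$ from $(\gamma_i^*)^+$; thus these $2r$ points of $\RPddual$ are pairwise $\ge 6\epsilon$-separated and their closed $\epsilon$-balls are pairwise disjoint. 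The content of (1) is then that the $2r$ open $\epsilon$-neighborhoods of the repelling hyperplanes $\gamma_i^-$ do not cover $\RPd$ — equivalently, that there is a line whose dual hyperplane in $\RPddual$ avoids all $2r$ of these disjoint balls. This is of the same nature as the ``basepoint'' statement underlying Benoist's ping-pong analysis of $\epsilon$-Schottky sets, and I would extract it from the estimates of Benoist \cite{BenoistAnnals}; transporting the $6\epsilon$-separation from $\RPddual$ back to $\RPd$ through Benoist's compatibility between the two metrics is the step I expect to require the most care. (One cannot simply take a fixed point: $\gamma_i^+$ always lies in $(\gamma_i^{-1})^-$, so every attracting fixed point — and every point near one — is close to some repelling hyperplane, which is exactly why the hypothesis on $S^*$ is genuinely needed.)

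\emph{Proof of (3).} This is a triangle-inequality argument. Using $(\gamma_j^*)^+=((\gamma_j^{-1})^-)^*$, the hypothesis $Y\in b((\gamma_j^*)^+,\epsilon)$ says that $Y$, as a hyperplane, is within $\epsilon$ in the dual metric of the hyperplane $(\gamma_j^{-1})^-$, while $X\in b(\gamma_i^+,\epsilon)$ says the line $X$ is within $\epsilon$ of $\gamma_i^+$. Since $i\ne j$, the $\epsilon$-Schottky condition on $S$ gives $d(\gamma_i^+,(\gamma_j^{-1})^-)\ge 6\epsilon$. The line-to-hyperplane distance $d(\cdot,\cdot)$ of Benoist is $1$-Lipschitz in its line argument (automatically, being an infimum of $1$-Lipschitz functions) and, by Benoist's metric estimates, is controlled in its hyperplane argument by the dual distance; hence $d(X,Y)\ge d(\gamma_i^+,(\gamma_j^{-1})^-)-d(X,\gamma_i^+)-d(Y,(\gamma_j^{-1})^-)\ge 6\epsilon-\epsilon-\epsilon=4\epsilon>0$. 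Since $d(X,Y)=0$ exactly when $X\subset Y$, the line $X$ is not contained in the hyperplane $Y$, i.e.\ $X$ and $Y$ span $\R^d$. The only delicate ingredient here, as in (1), is the precise form of Benoist's inequality relating the distances in $\RPd$ and $\RPddual$.
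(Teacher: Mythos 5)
Your parts (2) and (3) track the paper's argument: (2) is obtained by applying (1) to the dual data, and (3) is the same two-step triangle inequality the paper uses (if $X\subset Y$ then $X$ is within $\epsilon$ of a line of $(\gamma_j^{-1})^-$ and within $\epsilon$ of $\gamma_i^+$, contradicting the $6\epsilon$-separation of $\gamma_i^+$ from $(\gamma_j^{-1})^-$). Both you and the paper lean at the same spot on the unstated compatibility between Benoist's metric on $\RPd$ and the dual metric on $\RPddual$ (hyperplanes $\epsilon$-close in the dual metric have $\epsilon$-close sets of lines), so I will not hold that against you.

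The genuine gap is part (1), which you never actually prove: after reformulating it dually you write that you ``would extract it from the estimates of Benoist,'' which is a deferral, not an argument. Worse, your parenthetical dismissal of the direct approach is incorrect, and the direct approach is precisely what the paper does. It is true that $\gamma_1^+\subset(\gamma_1^{-1})^-$, so $\gamma_1^+$ itself fails to lie in $V$; but it is false that every point near $\gamma_1^+$ is within $\epsilon$ of some repelling hyperplane. Take any line $v$ with $d(v,\gamma_1^+)<5\epsilon$. Since $d(\gamma_1^+,\gamma_j^-)\ge 6\epsilon$ whenever $\gamma_j\ne\gamma_1^{-1}$ and $d(\,\cdot\,,\gamma_j^-)$ is $1$-Lipschitz, the triangle inequality already gives $d(v,\gamma_j^-)>\epsilon$ for all such $j$; the only remaining constraint is $d(v,(\gamma_1^{-1})^-)>\epsilon$. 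A line obtained by perturbing $\gamma_1^+$ transversally to the hyperplane $(\gamma_1^{-1})^-$ by an amount strictly between $\epsilon$ and $5\epsilon$ satisfies both conditions (such perturbations exist because $\epsilon$ is bounded by half the diameter of $\RPd$, by $\epsilon$-proximality), so $V\ne\emptyset$. Note also that this argument uses only that $S$ is symmetric and $\epsilon$-Schottky: contrary to your parenthetical, the hypothesis on $S^*$ is not needed for (1); it enters only in (2) and (3).
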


\begin{proof}
Since $S$ is symmetric and $\epsilon$-Schottky, if $v\in\RPd$ and $d(v,\gamma_i^+)<5\epsilon$, then
$d(v,\gamma_j^-)>\epsilon$ if $\gamma_j\ne\gamma_i^{-1}$. Therefore, $V$ contains any point such that
$d(v,\gamma_1^+) <5\epsilon$ and $d(v,(\gamma_1^{-1})^-)>\epsilon$,
so (1) holds. 
One can apply the same argument as in (1) to $S^*$ to verify (2).

Finally, to prove (3) we must show that if $X$ is a line in $b(\gamma_i^+, \epsilon)$ and $Y$ is a hyperplane
in $b((\gamma_j^*)^+, \epsilon)$, then $X$ is not contained in $Y$. However, any line 
contained in $Y$ lies within $\epsilon$ of a line in $(\gamma_j^{-1})^-$, which would imply that $d(\gamma_j^+,\gamma_j^-)<2\epsilon$,
which contradicts the fact that $S$ is $\epsilon$-Schottky.
\end{proof}

We now apply techniques of Benoist \cite{BenoistAnnals} and Quint \cite{QuintDynam} to produce limit maps for the Schottky groups generated by $\epsilon$-Schottky groups of matrices in $\SL_d(\R)$. These techniques are another instance of the ping-pong construction
which goes back to Klein \cite{klein-ping} in the Kleinian setting and Tits \cite{tits-ping} in the general linear setting.
The following proposition is our version of Proposition 3.3 in Quint \cite{QuintDynam}, which in turn
sharpens the discussion in Section 6.5 of Benoist \cite{BenoistAnnals}.

\begin{Proposition}\label{prop:LimitMaps}
{\rm (Quint \cite{QuintDynam})}
If $\epsilon>0$ and $S$ and $S^*$ are $\epsilon$-Schottky symmetric sets of proximal matrices in $\SL_d(\R)$ and $\SL((\R^d)^*)$,
then the subgroup 
$\Gamma$ of $\SL_d(\R)$ generated by $S$ is free of rank $r$. Furthermore, there exist transverse, dynamics preserving, $\Gamma$-equivariant, injective, continuous maps
\[
\xi : \partial_\infty \Gamma \to \RPd \quad \textrm{and} \quad \theta : \partial_\infty \Gamma \to \RPddual
\]
such that if $c\in\partial_\infty\Gamma$ is the attracting fixed point of an element $\gamma\in\Gamma$, then 
$\xi(c)$ is an attracting eigenline of $\gamma$ and $\theta(c)$ is an attracting hyperplane for $\gamma$.
\end{Proposition}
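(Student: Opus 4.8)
The plan is to carry out a ping-pong argument in the spirit of Benoist and Quint, using the $\epsilon$-Schottky hypotheses on both $S$ and $S^*$ to build the limit maps simultaneously in $\RPd$ and $\RPddual$. First I would fix the attracting/repelling data: for each generator $\gamma_i \in S$ write $b_i^+ = b(\gamma_i^+,\epsilon)$ and $B_i^- = B(\gamma_i^-,\epsilon)$, and similarly on the dual side. The $\epsilon$-Schottky condition gives the crucial nesting: $b_i^+ \subset B_j^-$ whenever $\gamma_j \neq \gamma_i^{-1}$, so $\gamma_i$ maps every $b_j^+$ with $j$ admissible (i.e.\ $\gamma_j \neq \gamma_i^{-1}$) strictly inside $b_i^+$, contracting distances by at least a factor $\epsilon$ on $B_i^-$. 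This is exactly the combinatorial setup of the classical ping-pong lemma, so freeness of $\Gamma = \langle S \rangle$ of rank $r$ is immediate (Tits \cite{tits-ping}): a reduced word $w = \gamma_{i_1}\cdots\gamma_{i_k}$ carries the nonempty set $V = \bigcap_j B(\gamma_j^-,\epsilon)$ from Lemma \ref{lem:ThanksFanny}(1) into $b_{i_1}^+$, hence cannot be the identity.

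Next I would construct $\xi$. For a boundary point $c \in \partial_\infty \Gamma$, represented by an infinite reduced word $\gamma_{i_1}\gamma_{i_2}\cdots$, the sets $\gamma_{i_1}\cdots\gamma_{i_n}(b_{i_{n+1}}^+)$ form a decreasing nested sequence of subsets of $\RPd$ whose diameters shrink geometrically, because at each stage we apply an $\epsilon$-Lipschitz map to a set already sitting inside the good region $B_{i_n}^-$ (the $\epsilon$-Schottky inequality guarantees $b_{i_{n+1}}^+ \subset B_{i_n}^-$ since consecutive letters in a reduced word are admissible). Their intersection is a single point, which I define to be $\xi(c)$; the geometric decay of diameters gives continuity of $\xi$, and equivariance is built in by construction. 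Injectivity follows because two distinct reduced words eventually diverge into disjoint balls $b_i^+$, $b_j^+$ with $i \neq j$, and these are disjoint by the $\epsilon$-Schottky separation $d(\gamma_i^+,\gamma_j^+) \geq 4\epsilon$ (a consequence of conditions (1) and (2) in the definition of well-positioned, propagated through to the Schottky constant). The same argument applied to $S^*$ and $\Gamma^* = \langle S^* \rangle$ — noting $\partial_\infty \Gamma^* \cong \partial_\infty \Gamma$ since both are free of rank $r$ with the generators in bijection — produces $\theta : \partial_\infty \Gamma \to \RPddual$. Transversality of $\xi$ and $\theta$ is then precisely Lemma \ref{lem:ThanksFanny}(3): for $x \neq y$, the words eventually separate, placing $\xi(x)$ in some $b_i^+$ and $\theta(y)$ in some $b((\gamma_j^*)^+,\epsilon)$ with $i \neq j$, and such a line and hyperplane span $\R^d$.

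The last point is the dynamics-preserving property. If $\gamma \in \Gamma$ has attracting fixed point $c \in \partial_\infty \Gamma$, then $c$ is the limit of $\gamma^n(pt)$, so by equivariance and continuity $\xi(c) = \lim \rho(\gamma)^n(\xi(pt)) = \rho(\gamma)^+$ once we know $\rho(\gamma)$ is proximal with the expected attracting line. Proximality of each element of a Schottky group and identification of its attracting fixed point with the ping-pong limit is standard (Benoist \cite{BenoistAnnals}, Quint \cite{QuintDynam}): a cyclically reduced word $\gamma = \gamma_{i_1}\cdots\gamma_{i_k}$ maps $B(\gamma_{i_k}^-,\epsilon)$ into $b(\gamma_{i_1}^+,\epsilon)$ with strong contraction, so by a fixed-point/contraction argument $\gamma$ has a unique attracting fixed line inside $b(\gamma_{i_1}^+,\epsilon)$ with a uniform spectral gap over the rest of the spectrum; this line is exactly $\lim_n \gamma^n(V)$, which is $\xi(c)$. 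Applying the identical reasoning on the dual side identifies $\theta(c)$ with the attracting hyperplane of $\gamma$.

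The main obstacle I anticipate is not any single step but bookkeeping the constants: one must verify that the well-positioned hypothesis of Theorem \ref{thm:ConvexSchottky} can be upgraded, after replacing $S$ by $S^n$ for large $n$, to the quantitative $\epsilon$-Schottky condition for both $S^n$ and $(S^n)^* = (S^*)^n$ simultaneously — i.e.\ that high powers of a biproximal element become arbitrarily $\epsilon$-proximal with attracting/repelling data converging to that of the original, so that the open separation coming from conditions (1) and (2) of well-positioned survives. This convergence of normalized powers is precisely where Benoist's and Quint's estimates on proximal elements enter, and it is the content needed to deduce Theorem \ref{thm:ConvexSchottky} from Proposition \ref{prop:LimitMaps} together with the criterion of Theorem \ref{GGKW criterion} (the growth condition $\alpha_1(\mu(x_0\cdots x_n)) \to \infty$ along infinite reduced words being another consequence of the geometric contraction in the ping-pong dynamics).
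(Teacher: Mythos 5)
Your proposal is correct and follows essentially the same route as the paper: ping-pong from the nonempty sets $V$ and $W$ of Lemma \ref{lem:ThanksFanny} to get freeness, nested $\epsilon$-contracted images of the balls $b(\gamma_i^+,\epsilon)$ to define $\xi$ (and dually $\theta$), transversality from Lemma \ref{lem:ThanksFanny}(3), and convergence of $\gamma^n(V)$ to identify $\xi(c)$ with the attracting eigenline. One small correction: the disjointness of $b(x_r^+,\epsilon)$ and $b(y_r^+,\epsilon)$ used for injectivity must be derived from the $\epsilon$-Schottky hypothesis itself (since $\gamma_j^+\subset(\gamma_j^{-1})^-$, the bound $d(\gamma_i^+,(\gamma_j^{-1})^-)\ge 6\epsilon$ forces $d(\gamma_i^+,\gamma_j^+)\ge 6\epsilon$), not from the well-positioned condition, which is a hypothesis of Theorem \ref{thm:ConvexSchottky} but not of this proposition.
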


\begin{proof}
By Lemma \ref{lem:ThanksFanny}, 
\[
V= \bigcap_{1 \le i \le 2 r} B(\gamma_j^-, \epsilon)
\]
is nonempty. Choose $v\in V$, and let $x = x_0 x_1 \cdots x_n$ be any reduced word in the elements of $S$. Then, one may iteratively check that
\[
x(v) \in x_0 x_1 \cdots x_{i - 1} (b(x_i^+, \epsilon))\subset b(x_0,\epsilon).
\]
In particular, $x(v)$ does not lie in $V$, since it does not lie in $B((x_0^{-1})^-,\epsilon)$, so $x \ne 1$. Therefore, $\Gamma$ is free on $r$ generators. Each element $x_j\in S$ is \hbox{$\epsilon$-Lipschitz} on $b(x_j^+, \epsilon)$, so $x_0 \cdots x_{i - 1} (b(x_i^+, \epsilon))$ has diameter at most $\epsilon^i D$, where $D$ is the diameter of $\RPd$.

Recall that the boundary of a free group can be identified with the set of infinite reduced words in its generators.
If $x = (x_i)$ is an infinite reduced word in the elements of $S$, then it follows that $\{x_0x_1\cdots x_n(v)\}$ is a Cauchy sequence and we can define
\[
\xi(x) = \lim_{n \to \infty} x_0 \cdots x_n (v).
\]
If $w$ is any point in $V$, then $x_0\cdots x_n(w)$ also lies in $x_0 x_1 \cdots x_{i - 1} (b(x_i^+, \epsilon))$, which has diameter at most
$\epsilon^nD$. It follows that $\xi(x)=\lim_{n \to \infty} x_0 \cdots x_n (w)$. In particular, if $c\in\partial_\infty\Gamma$
is the attracting fixed point of $\gamma\in\Gamma$, then $\gamma^n(V)$ converges to $\xi(c)$ which implies that
$\xi(c)$ is an attracting eigenline of $\gamma$.

Given two infinite reduced words $x=(x_i)$ and $y=(y_i)$, let $r = r(x, y)$ be the first index at which they differ. Then
\[
\xi(x), \xi(y) \in x_0 \cdots x_{r - 2} (b(x_{r - 1}^+, \epsilon)),
\]
so $d(\xi(x), \xi(y)) \le \epsilon^{r - 1} D$, which implies that $\xi$ is continuous. Furthermore,
\[
\xi(x) \in x_0 \cdots x_{r - 1} (b(x_r^+, \epsilon)) \quad\textrm{and}\quad \xi(y) \in x_0 \cdots x_{r - 1}( b(y_r^+, \epsilon)),
\]
which are disjoint sets since $b(x_r^+, \epsilon)$ and $b(y_r^+,\epsilon)$ are disjoint. Therefore, $\xi$ is injective.

Lemma \ref{lem:ThanksFanny} also gives us that
\[
W = \bigcap_{1 \le i \le 2 r} B((\gamma_i^*)^-, \epsilon)\subset\RPddual
\]
is nonempty. The same argument as above applied to the dual representation gives $\theta$.

Given two infinite reduced words $x=(x_i)$ and $y=(y_i)$, let $r = r(x, y)$ be the first index at which they differ. Then
\[
\xi(x) \in x_0 \cdots x_{r - 1}( b(x_r^+, \epsilon)) \quad\textrm{and}\quad\theta(y) \in x_0 \cdots x_{r - 1} ( b((y_r^*)^+, \epsilon)).
\]
If $X=(x_0x_1\ldots x_r)^{-1}(\xi(x))$ and $Y=\left( (x_0x_1\cdots x_{r_1})^{-1}\right)^*(\theta(y))$, then Lemma \ref{lem:ThanksFanny} implies that $X$ and $Y$ span $\R^d$. Therefore, since \hbox{$x_0 \cdots x_{r - 1}$} is a linear transformation, $\xi(x)$ and $\theta(y)$ span $\R^d$. This implies that $\xi$ and $\theta$ are transverse.
\end{proof}

If $\Gamma$ is irreducible, it follows immediately from Proposition \ref{convex irreducible} that the injection of $\Gamma$ into $\SL_d(\R)$ guaranteed by Proposition \ref{prop:LimitMaps} is projective Anosov. However, this is not necessarily the case. We also need the following estimate of Benoist. We note that in Benoist \cite{BenoistAnnals} the Cartan and Jordan projections map into $\exp(\bar{\mathfrak{a}}^+)$.

\begin{Proposition}[Benoist \cite{BenoistAnnals}]
\label{prop:BenoistOperatorNorm}
If $\epsilon>0$ and $S$ is $\epsilon$-Schottky, then there is a constant $C_\epsilon \in (0, 1)$ such that if $x_0 \cdots x_n$
is a reduced word in $S$, then $x_0 \cdots x_n$ is proximal and
\[
\frac{||x_0 \cdots x_n||}{||x_0|| \cdots ||x_n||} \ge C_\epsilon^{n+2}.
\]
\end{Proposition}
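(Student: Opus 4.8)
The bound is, up to a change of conventions for the Cartan and Jordan projections, Benoist's \cite{BenoistAnnals}; I will sketch the argument in the present notation. Write $\Lambda_1(g)=e^{\lambda_1(g)}$ for the modulus of the dominant eigenvalue of a proximal $g$, and $\sigma_1(g)\ge\sigma_2(g)\ge\cdots$ for its singular values, so $\|g\|=\sigma_1(g)$; let $v_1(\gamma),\dots,v_d(\gamma)$ be an orthonormal right-singular basis of $\gamma$. Everything rests on a single one-step estimate, which I would isolate first: there is $c_\epsilon\in(0,1)$, depending only on $\epsilon$ and $d$, such that for every $\epsilon$-proximal $\gamma\in\SL_d(\R)$ and every nonzero $w\in\R^d$ with $[w]\in B(\gamma^-,\epsilon)$,
\[
\|\gamma w\|\ \ge\ c_\epsilon\,\|\gamma\|\,\|w\| \qquad\text{and}\qquad [\gamma w]\in b(\gamma^+,\epsilon).
\]
The second assertion is literally property (3) of $\epsilon$-proximality. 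For the first, the point is that $\epsilon$-proximality is rigid enough to place the singular data of $\gamma$ close to its eigen-data: by the estimates of Benoist \cite{BenoistAnnals} (see also Quint \cite{QuintDynam}), $\epsilon$-proximality forces the singular gap $\sigma_2(\gamma)/\sigma_1(\gamma)$ to be bounded above by a small function of $\epsilon$, and forces the hyperplane $v_1(\gamma)^\perp$ to be within $O(\sigma_2(\gamma)/\sigma_1(\gamma))$ of $\gamma^-$. Consequently, if $[w]\in B(\gamma^-,\epsilon)$ then $[w]$ stays at definite distance from $v_1(\gamma)^\perp$, i.e.\ $|\langle w,v_1(\gamma)\rangle|\ge c_\epsilon\|w\|$, and then
\[
\|\gamma w\|^2\ =\ \sum_{i=1}^d \sigma_i(\gamma)^2\,\langle w,v_i(\gamma)\rangle^2\ \ge\ \sigma_1(\gamma)^2\,\langle w,v_1(\gamma)\rangle^2\ \ge\ c_\epsilon^2\,\|\gamma\|^2\,\|w\|^2.
\]

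Granting this, the norm bound follows by ping-pong and telescoping. Let $x=x_0x_1\cdots x_n$ be a reduced word in $S$ and let $u$ be a unit vector spanning the line $x_n^+$; since $d(x_n^+,x_n^-)\ge 2\epsilon$ we have $[u]\in B(x_n^-,\epsilon)$. Put $u_n=u$ and $u_{k-1}=x_k u_k$, so $u_0=x\,u$. I claim $[u_k]\in B(x_k^-,\epsilon)$ for all $k$: assuming this for $k$, the one-step estimate gives $[u_{k-1}]=[x_ku_k]\in b(x_k^+,\epsilon)$, and since the word is reduced we have $x_k\ne x_{k-1}^{-1}$, so the $\epsilon$-Schottky condition gives $d(x_k^+,x_{k-1}^-)\ge 6\epsilon$ and hence $b(x_k^+,\epsilon)\subset B(x_{k-1}^-,\epsilon)$ (the remark following the definition of $\epsilon$-Schottky sets). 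Applying the norm half of the one-step estimate at $k=n,n-1,\dots,0$ and telescoping,
\[
\|x\|\ \ge\ \frac{\|x\,u\|}{\|u\|}\ =\ \|u_0\|\ \ge\ c_\epsilon^{\,n+1}\,\|x_0\|\cdots\|x_n\|\,\|u\|\ =\ c_\epsilon^{\,n+1}\,\|x_0\|\cdots\|x_n\|,
\]
and since $c_\epsilon<1$ this is $\ge c_\epsilon^{\,n+2}\,\|x_0\|\cdots\|x_n\|$, so $C_\epsilon:=c_\epsilon$ works.

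Finally, for proximality of $x$: if $x$ is cyclically reduced ($x_0\ne x_n^{-1}$), the same ping-pong shows $x\bigl(B(x_n^-,\epsilon)\bigr)\subset b(x_0^+,\epsilon)\subset B(x_n^-,\epsilon)$, and that $x|_{B(x_n^-,\epsilon)}$ is a composition of $n+1$ maps, each $\epsilon$-Lipschitz on the relevant nested domain, hence (assuming, as we may, $\epsilon<1$) a strict contraction of $B(x_n^-,\epsilon)$ into the compact set $b(x_0^+,\epsilon)$; thus $x$ has a unique attracting fixed line in $b(x_0^+,\epsilon)$ on which its linearization contracts, which forces the eigenvalue of $x$ of maximal modulus to be real and simple, i.e.\ $x$ is proximal. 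A general reduced word is conjugate in $\langle S\rangle$ to a cyclically reduced one (peel off a matching initial and terminal pair and recurse), and proximality is conjugation-invariant. The one obstacle worth flagging is the one-step estimate: establishing \emph{uniformly in} $\gamma$ that $\epsilon$-proximality forces the dominant singular direction and the repelling hyperplane into the expected position. This is exactly the quantitative content supplied by Benoist's work, and once it is in hand the remainder is routine ping-pong.
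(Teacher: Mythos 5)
Your proposal is correct in substance, but it takes a genuinely different route from the paper: the paper's entire proof is a two-line citation of Benoist's Proposition 6.4 (which already gives proximality of the product and the lower bound with $\Lambda_1(x_0)\cdots\Lambda_1(x_n)$ in the denominator), followed by the comparison between $\Lambda_1$ and the operator norm. You instead reconstruct the content of that cited proposition: you isolate the one-step estimate $\|\gamma w\|\ge c_\epsilon\|\gamma\|\|w\|$ for $[w]\in B(\gamma^-,\epsilon)$, run the ping-pong/telescoping argument along the orbit of a vector spanning $x_n^+$, and obtain proximality of cyclically reduced words via the contraction mapping theorem on $B(x_n^-,\epsilon)$ (reducing the general case by conjugation). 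This is essentially how Benoist proves the result, so you have unpacked the black box rather than found a new argument; what it buys is transparency about where the constant $C_\epsilon$ and the exponent $n+2$ (your argument actually yields the slightly better $n+1$) come from, and it delivers proximality of the product directly rather than importing it. The one genuine external input you rely on --- that $\epsilon$-proximality pins the dominant singular direction and hyperplane to the eigen-data uniformly, so that the one-step norm estimate holds with a constant depending only on $\epsilon$ and $d$ --- is correctly flagged and is exactly the quantitative lemma of Benoist/Quint; you do not prove it, but since the proposition is attributed to Benoist this is acceptable, and it is the same ingredient the paper implicitly uses when comparing $\Lambda_1(x_i)$ with $\|x_i\|$. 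Two small points to tidy: the recursion $u_{k-1}=x_k u_k$ with $u_n=u$ terminates at $u_{-1}=xu$ rather than $u_0=xu$ as written (harmless, since you do count $n+1$ applications of the one-step estimate), and in the proximality step you should say explicitly that the projective contraction at the fixed line forces the induced map on $\R^d/\xi$ to have spectral radius strictly less than the modulus of the eigenvalue on $\xi$, which is what makes that eigenvalue real, simple, and of maximal modulus.
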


\begin{proof}
Proposition 6.4 in Benoist \cite{BenoistAnnals} asserts that 
 $x_0 \cdots x_n$ is proximal and
there exists $C_\epsilon>0$ so that
\[
\frac{||x_0 \cdots x_n||}{\Lambda_1(x_0)\cdots \Lambda_1(x_n)} \ge C_\epsilon^{n+2}.
\]
Since $1<\Lambda_1(x_i)\le ||x_i||$ for all $i$ (see, for example, \cite[Cor.\ 6.3]{BenoistAnnals}), the estimate follows.
\end{proof}

We now establish our criterion for a collection of elements to generate a projective Anosov Schottky group.

\begin{Proposition}\label{convexanosov}
If $\epsilon>0$ and $S = \{\gamma_1, \dots, \gamma_{2 r}\}$ and $S^* = \{\gamma_1^*, \dots, \gamma_{2 r}^*\}$
are $\epsilon$-Schottky symmetric sets of proximal matrices
in $\SL_d(\R)$ and $\SL((\R^d)^*)$ such that
\[
\alpha_1(\mu(\gamma_i)) \ge -3 \log(C_\epsilon)
\]
for all $i$, where $C_\epsilon$ is the constant in Proposition \ref{prop:BenoistOperatorNorm},
then the group $\Gamma$ generated by $S$ is free of rank $r$ and the injection of $\Gamma$ into $\SL_d(\R)$ is projective Anosov.
\end{Proposition}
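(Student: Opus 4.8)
The plan is to combine the limit maps furnished by Proposition~\ref{prop:LimitMaps} with the criterion of Gu\'eritaud--Guichard--Kassel--Wienhard (Theorem~\ref{GGKW criterion}); the role of the extra hypothesis $\alpha_1(\mu(\gamma_i))\ge -3\log C_\epsilon$ and of Benoist's norm estimate (Proposition~\ref{prop:BenoistOperatorNorm}) is precisely to verify the divergence condition required by that criterion.

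Since $S$ and $S^*$ are $\epsilon$-Schottky, Proposition~\ref{prop:LimitMaps} already tells us that $\Gamma=\langle S\rangle$ is free of rank $r$ and produces transverse, dynamics preserving, $\Gamma$-equivariant, injective, continuous maps $\xi\colon\partial_\infty\Gamma\to\RPd$ and $\theta\colon\partial_\infty\Gamma\to\RPddual$. Identifying $\partial_\infty\Gamma$ with the space of infinite reduced words in the elements of $S$, Theorem~\ref{GGKW criterion} (when $r\ge 2$) reduces the proof to checking that
\[
\alpha_1(\mu(x_0\cdots x_n))\to +\infty\qquad\text{as }n\to\infty
\]
for every infinite reduced word $(x_n)$ in $S$. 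When $r=1$ the group $\Gamma$ is cyclic, generated by a biproximal element, hence projective Anosov by the classical rank-one argument, so I would dispose of that case separately.

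For the estimate I would use that $\log\|g\|=\mu_1(g)$ and that the operator norm of the exterior square $\Lambda^2 g$ on $\Lambda^2\R^d$, equipped with the inner product induced by $\langle\ ,\ \rangle$, equals $e^{\mu_1(g)+\mu_2(g)}$, so that
\[
\alpha_1(\mu(g))=\mu_1(g)-\mu_2(g)=2\log\|g\|-\log\|\Lambda^2 g\|.
\]
Applying Proposition~\ref{prop:BenoistOperatorNorm} to the reduced word $x_0\cdots x_n$ in $S$ and multiplying by $2$ gives
\[
2\log\|x_0\cdots x_n\|\ \ge\ 2(n+2)\log C_\epsilon+2\sum_{i=0}^{n}\log\|x_i\|,
\]
while submultiplicativity of the operator norm under the exterior square representation gives
\[
\log\|\Lambda^2(x_0\cdots x_n)\|\ \le\ \sum_{i=0}^{n}\log\|\Lambda^2 x_i\|.
\]
Subtracting, regrouping the sums, and applying the displayed identity to each factor $x_i\in S$ yields
\[
\alpha_1(\mu(x_0\cdots x_n))\ \ge\ 2(n+2)\log C_\epsilon+\sum_{i=0}^{n}\alpha_1(\mu(x_i)).
\]
Now the hypothesis gives $\alpha_1(\mu(x_i))\ge -3\log C_\epsilon$ for each $i$, and $C_\epsilon\in(0,1)$ forces $\log C_\epsilon<0$, so
\[
\alpha_1(\mu(x_0\cdots x_n))\ \ge\ 2(n+2)\log C_\epsilon-3(n+1)\log C_\epsilon=(1-n)\log C_\epsilon,
\]
which tends to $+\infty$. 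This bound is in fact uniform over all infinite reduced words, so Theorem~\ref{GGKW criterion} applies and the injection of $\Gamma$ into $\SL_d(\R)$ is projective Anosov.

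There is no deep obstacle here; the work is organizational. One must be careful that the fixed norm on $\R^d$ is Euclidean, so that $\mu_1,\mu_2$ really are the two top singular values and $\log\|\Lambda^2 g\|=\mu_1(g)+\mu_2(g)$; one must track that Proposition~\ref{prop:LimitMaps} uses the $\epsilon$-Schottky hypothesis on both $S$ and $S^*$ while Proposition~\ref{prop:BenoistOperatorNorm} needs it only for $S$; and one must handle the rank-one case by hand, since the GGKW criterion is stated for $r\ge 2$. The one conceptual point is the identity $\alpha_1\circ\mu=2\log\|\cdot\|-\log\|\Lambda^2(\cdot)\|$, which converts the asymmetric quantity $\mu_1-\mu_2$ into something to which Benoist's (sub)multiplicative norm estimates apply directly.
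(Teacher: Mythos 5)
Your proof is correct and follows essentially the same route as the paper's: Proposition~\ref{prop:LimitMaps} for freeness and the limit maps, Theorem~\ref{GGKW criterion} to reduce to the divergence of $\alpha_1\circ\mu$ along reduced words, and the identity $\alpha_1(\mu(g))=2\log\|g\|-\log\|{\textstyle\bigwedge}^2 g\|$ combined with Proposition~\ref{prop:BenoistOperatorNorm} and submultiplicativity of $\|{\textstyle\bigwedge}^2(\cdot)\|$ to obtain the identical lower bound $(1-n)\log C_\epsilon\to+\infty$. Your explicit attention to the $r=1$ case (which the GGKW criterion as stated excludes) is a small point the paper's proof silently elides.
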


\begin{proof}
Proposition \ref{prop:LimitMaps} guarantees that $\Gamma$ is a free group of rank $r$ and that there exist 
transverse, dynamics preserving, $\Gamma$-equivariant, injective continuous maps
\[
\xi : \partial_\infty \Gamma \to \RPd \quad \textrm{and}\quad\theta: \partial_\infty \Gamma \to \RPddual.
\]
Theorem \ref{GGKW criterion} then implies that if
\[
\lim \alpha_1(\mu(x_0\ldots x_n)) = +\infty
\]
for every infinite reduced word $(x_n)$ in $S$, then the injection of $\Gamma$ into $\SL_d(\R)$ is projective Anosov.

Let $\bigwedge^2 g$ be the second exterior product of $g \in \SL_d(\R)$. Then
\[
\alpha_1(\mu(g)) = \log \frac{||g||^2}{||\bigwedge^2 g||},
\]
since $\log ||g||=\mu_1(g)$ and
\[
\log ||\tiny{\bigwedge}{}^2g||=\log||\tiny{\bigwedge}{}^2(\exp(\mu(g)))||=\mu_1(g)+\mu_2(g).
\]
Therefore,
\begin{align*}
\alpha_1(\mu(x_0 \cdots x_n)) &= \log \frac{\|x_0 \cdots x_n\|^2}{\|\bigwedge^2(x_0 \cdots x_n)\|} \\
&= \log \frac{\|x_0 \cdots x_n\|^2}{\|x_0\|^2 \cdots \|x_n\|^2} + \log \frac{\|x_0\|^2 \cdots \|x_n\|^2}{\|\bigwedge^2(x_0 \cdots x_n)\|} \\
&\ge 2 (n+2) \log C_\epsilon + \log \frac{\|x_0\|^2 \cdots \|x_n\|^2}{\|\bigwedge^2 x_0\| \cdots \|\bigwedge^2 x_n\|} \\
&= 2 (n+2) \log C_\epsilon + \sum_{i = 0}^n \alpha_1(\mu(x_i)) \\
&\ge (2n+4 - 3(n+1)) \log C_\epsilon = -(n-1) \log C_\epsilon.
\end{align*}
Since $C_\epsilon \in (0, 1)$, we have that $-(n-1) \log C_\epsilon \to \infty$ as $n \to \infty$, so
\[
\lim_{n \to \infty} \alpha_1(\mu(x_0\cdots x_n))=+\infty.
\]
This completes the proof.
\end{proof}

\medskip
\noindent
{\bf Remark:} Our original proof of Proposition \ref{convexanosov} showed that, in our setting, one could use the fact \hbox{$\lim \alpha_1(\mu(x_0\ldots x_n)) = +\infty$} for every infinite reduced word $(x_n)$ in $S$ to verify condition (iv) of Proposition 3.16 in Guichard--Wienhard \cite{Guichard-Wienhard}. As the results of Gu\'eritaud, Guichard, Kassel and Wienhard \cite{GGKW} are much more general, we make use of their work instead.

\medskip

Theorem \ref{thm:ConvexSchottky} then follows immediately from Proposition \ref{convexanosov} and the following lemma.

\begin{Lemma}\label{lem:SchottkyPower}
Let $S = \{\gamma_1, \dots, \gamma_{2 r}\}$ be a well-positioned symmetric set of biproximal matrices in $\SL_d(\R)$. Then there exists $\epsilon \in (0, 1)$ and $N > 0$ such that if $n\ge N$, then
\begin{enumerate}

\item
$S^n$ and $(S^n)^*$ are $\epsilon$-Schottky, and

\item
$\alpha_1(\mu(\gamma_i^n)) \ge -3 \log(C_\epsilon)$ for all $i$, where $C_\epsilon$ is the constant in Proposition \ref{prop:BenoistOperatorNorm}.

\end{enumerate}
\end{Lemma}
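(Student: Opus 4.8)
The plan is to extract the two desired properties from the asymptotic behavior of powers of a biproximal matrix. Fix a biproximal $\gamma \in \SL_d(\R)$ with attracting fixed point $\gamma^+ \in \RPd$ and repelling hyperplane $\gamma^- \in \RPddual$. The key elementary fact is that, viewed as a transformation of $\RPd$, the iterate $\gamma^n$ converges uniformly on compact subsets of $\RPd \smallsetminus \gamma^-$ to the constant map with value $\gamma^+$, with the rate of convergence governed by the ratio $\Lambda_2(\gamma)/\Lambda_1(\gamma) < 1$ of the two largest eigenvalue moduli (this is just the Jordan form computation, since biproximality makes both the top and bottom eigenvalues simple and real). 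In particular, for any fixed neighborhood size, once $n$ is large enough, $\gamma^n$ is $\epsilon$-Lipschitz on $B(\gamma^-,\epsilon)$ and maps $B(\gamma^-,\epsilon)$ into $b(\gamma^+,\epsilon)$; and since $\gamma$ and $\gamma^n$ have the same attracting fixed point and repelling hyperplane, the separation condition $d((\gamma^n)^+,(\gamma^n)^-) = d(\gamma^+,\gamma^-) \ge 2\epsilon$ is automatic once $\epsilon$ is chosen small. So each $\gamma_i^n$ is $\epsilon$-proximal for $n$ large. The same argument applied to the dual transformations $\gamma_i^*$ — which are again biproximal, with $(\gamma_i^*)^\pm$ determined by $(\gamma_i^{-1})^\mp$ as recorded in the excerpt — shows each $\gamma_i^{n\,*}$ is $\epsilon$-proximal for $n$ large.

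Next I would choose $\epsilon$ using the well-positioned hypothesis. Conditions (1) and (2) in the definition of well-positioned say exactly that $\gamma_i^+ \ne \gamma_j^+$ for $i \ne j$ and $\gamma_i^+$ is not contained in $\gamma_j^-$ when $\gamma_j \ne \gamma_i^{-1}$; together with symmetry of the set, these give $d(\gamma_i^+,(\gamma_j^{-1})^-) > 0$ for all $i \ne j$ (noting $(\gamma_j^{-1})^- = \gamma_j^+$ only in cases excluded by (1)). Since $S$ is finite, there is a uniform lower bound $\delta_0 > 0$ for all these distances and for the distances $d(\gamma_i^+,\gamma_i^-)$; I then set $\epsilon := \delta_0/12 \in (0,1)$ (shrinking further if needed so that $\epsilon < 1$). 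With this $\epsilon$ fixed, the $\epsilon$-Schottky separation requirement $d((\gamma_i^n)^+,((\gamma_j^n)^{-1})^-) \ge 6\epsilon$ becomes $d(\gamma_i^+,(\gamma_j^{-1})^-) \ge 6\epsilon$, which holds by construction; and the analogous inequality for $(S^n)^*$ holds because the relevant distances among the $(\gamma_i^*)^\pm$ are again controlled by the same well-positioned conditions (transversality of the data passes to the dual). Combined with the first paragraph, choosing $N$ large enough that every $\gamma_i^n$ and $\gamma_i^{n\,*}$ is $\epsilon$-proximal for $n \ge N$ gives conclusion (1).

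Finally, for conclusion (2) I would note that $\alpha_1(\mu(\gamma_i^n)) = \mu_1(\gamma_i^n) - \mu_2(\gamma_i^n) \to +\infty$ as $n \to \infty$. The cleanest way to see this: for $g \in \SL_d(\R)$ one has $\lim_{n\to\infty} \tfrac{1}{n}\mu(g^n) = \lambda(g)$ (the Jordan projection is the stable limit of the Cartan projection), so $\tfrac{1}{n}\alpha_1(\mu(\gamma_i^n)) \to \alpha_1(\lambda(\gamma_i)) = \lambda_1(\gamma_i) - \lambda_2(\gamma_i)$, and biproximality forces $\lambda_1(\gamma_i) > \lambda_2(\gamma_i)$ strictly, since the top eigenvalue is simple. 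Hence $\alpha_1(\mu(\gamma_i^n))$ grows linearly in $n$, so for $n$ large it exceeds the fixed constant $-3\log(C_\epsilon) > 0$; enlarging $N$ if necessary yields (2) for all $i$ simultaneously (there are only finitely many $i$). Taking the maximum of all the thresholds produced above gives a single $N$ that works.

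I expect the main technical obstacle to be the first paragraph: making the convergence $\gamma_i^n \to \gamma_i^+$ on $\RPd \smallsetminus \gamma_i^-$ quantitative enough to extract the $\epsilon$-Lipschitz bound on $B(\gamma_i^-,\epsilon)$ — as opposed to merely a contraction statement — and verifying that the same bounds survive passing to the dual representation uniformly over the finite set $S$. Once the neighborhoods $b(\cdot,\epsilon)$ and $B(\cdot,\epsilon)$ are set up with the $\epsilon$ chosen in the second paragraph, this is a compactness-plus-Jordan-form argument, but the bookkeeping between the three constants ($\epsilon$ from well-positionedness, $N$ from $\epsilon$-proximality, $N'$ from the $\alpha_1$ growth, all depending on $\epsilon$ via $C_\epsilon$) needs to be organized carefully so there is no circularity: $\epsilon$ is fixed \emph{first}, purely from the well-positioned data; $C_\epsilon$ is then determined; and only then is $N$ chosen.
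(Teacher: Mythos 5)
Your proposal is correct and follows essentially the same route as the paper: fix $\epsilon$ first from the well-positioned data (the paper takes $\epsilon=\tfrac16\min_{\gamma_j\neq\gamma_i^{-1}}d(\gamma_i^+,\gamma_j^-)$), then choose $N$ so that all $\gamma_i^n$ and $(\gamma_i^n)^*$ are $\epsilon$-proximal, and obtain (2) from $\lambda(\gamma_i)=\lim_n\tfrac1n\mu(\gamma_i^n)$ together with $\alpha_1(\lambda(\gamma_i))>0$. The only difference is that the step you flag as the main technical obstacle (the $\epsilon$-Lipschitz contraction of high powers on $B(\gamma_i^-,\epsilon)$) is not proved in the paper either --- it is quoted from the remark in Section 6.2 of Benoist \cite{BenoistAnnals} --- so your Jordan-form argument would simply make that citation self-contained.
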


\begin{proof}
Choose
\[
\epsilon_1 = \min_{\gamma_j \neq \gamma_i^{-1}} d(\gamma_i^+, \gamma_j^-)>0,
\]
and set $\epsilon = \frac{1}{6} \epsilon_1$. Notice that
if $\gamma_j \neq \gamma_i^{-1}$, then 
\[
d((\gamma_i^*)^+,(\gamma_j^*)^-))=d((\gamma_i^{-1})^-,(\gamma_j^{-1})^+)\ge \epsilon.
\]

As in the remark in Section 6.2 of \cite{BenoistAnnals}, there exists $N$ so that if $n>N$, then 
$\gamma_i^n$ and $(\gamma_i^n)^*$ are $\epsilon$-proximal for all $i$,
so $S$ and $S^*$ are $\epsilon$-Schottky symmetric sets of proximal matrices in $\SL_d(\R)$ and $\SL((\R^d)^*)$.

Benoist \cite[Cor. 2.5.2]{Benoist} showed that 
\[
\lambda(\gamma_i) = \lim_{n \to \infty} \frac{1}{n} \mu(\gamma_i^n).
\]
Since each $\gamma_i$ is proximal, $\exp(\lambda(\gamma_i))$ is proximal, so $\alpha_1(\lambda(\gamma_i)) > 0$ for all $i$. In particular, $\lim_{n\to\infty}\alpha_1(\mu(\gamma_i^n)) = +\infty$, and hence we can also choose $N$ large enough that
\[
\alpha_1(\mu(\gamma_i^n)) \ge -3 \log(C_\epsilon),
\]
for all $i$ and $n\ge N$. This completes the proof of the lemma.
\end{proof}

\medskip\noindent
{\bf Remark:}
One may use the Pl\"ucker embedding (see Proposition \ref{Plucker}) to produce analogues of Theorem \ref{thm:ConvexSchottky} for any semisimple Lie group with finite center and any pair $P^\pm$ of proper opposite parabolic subgroups. 



\bibliographystyle{plain}
\bibliography{anosovbibtex}

\end{document}